\definecolor{green3}{HTML}{00B000}
\def\cn{{\mathcal N}}
\def\bF{{\mathbb F}}
\def\sp{{\mathrm{Sp}}}
\newcommand{\br}{\mathbb{R}}
\newcommand{\bz}{\mathbb Z}
\newcommand{\bd}{\mathbb D}
\newcommand{\cs}{\mathcal S} 
\newcommand{\cc}{\mathcal C} 
\newcommand{\cf}{\mathcal F} 
\newcommand{\ci}{\mathcal I} 
\newcommand{\ck}{\mathcal K} 
\newcommand{\ca}{\mathcal A} 
\newcommand{\lk}{\mathrm{Lk}} 
\newcommand{\Mod}{\mathrm{Mod}} 
\newcommand{\PMod}{\mathrm{PMod}} 
\newcommand{\Hom}{\mathrm{Hom}} 
\newcommand{\Sym}{\mathrm{Sym}} 
\newcommand{\Aut}{\mathrm{Aut}} 
\newcommand{\into}{\hookrightarrow}
\newcommand{\del}{\partial}
\DeclareMathOperator{\cg}{CG}
\DeclareMathOperator{\kg}{KG}
\newtheorem{Thm}{Theorem}[section]
\newtheorem{Prob}[Thm]{Problem}
\newtheorem{Prop}[Thm]{Proposition}
\newtheorem{Lem}[Thm]{Lemma}
\newtheorem{Cor}[Thm]{Corollary}
\theoremstyle{definition}
\theoremstyle{remark}
\newtheorem{Rem}[Thm]{Remark}
\numberwithin{equation}{section}
\title{Coloring curves on surfaces}
\author{Jonah Gaster}
\author{Joshua Evan Greene}
\author{Nicholas G. Vlamis}
\address{Department of Mathematics, Boston College \\ Chestnut Hill, MA 02467}
\email{gaster@bc.edu,joshua.greene@bc.edu}
\address{Department of Mathematics, University of Michigan \\ Ann Arbor, MI 48109}
\email{vlamis@umich.edu}
\begin{document}

\maketitle

\begin{abstract}
We study the chromatic number of the curve graph of a surface.
We show that the chromatic number grows like $k \log k$ for the graph of separating curves on a surface of Euler characteristic $-k$.
We also show that the graph of curves that represent a fixed non-zero
homology class is uniquely $t$-colorable, where $t$ denotes its clique number.
Together, these results lead to the best known bounds on the chromatic number of the curve graph.
We also study variations for arc graphs and obtain exact results for surfaces of low complexity.
Our investigation leads to connections with Kneser graphs, the Johnson homomorphism, and hyperbolic geometry.
\end{abstract}


\section{Introduction}

Curve graphs play a central role in the study of mapping class groups, Teichm\"{u}ller spaces, and 3-manifolds.
In this setting, their large-scale geometry has grown into a subject of intensive study \cite{Masurgeometry1,Bowditchtight,Minskyclassification,Brockclassification,Bestvinaquasitree}.
Alongside it, interest has grown in their graph-theoretic properties \cite{Ivanovautomorphisms,Aramayonafinite,MalesteinDesigns,Birmandeadends,Kim-Koberda}.
Here we explore their {\em chromatic number}, one of the most natural and attractive invariants in graph theory.

We briefly fix some terminology.
Let $S$ denote a compact, connected, orientable surface.  
The \emph{curve graph} $\cc(S)$ is the graph whose vertices are isotopy classes of essential simple closed curves on $S$, where two isotopy classes are adjacent if they have disjoint representatives \cite{Harveyboundary}.
We refer to the vertices of $\cc(S)$ simply as $\emph{curves}$.
The {\em chromatic number} $\chi(G)$ of a graph $G$ is the fewest number of colors required to color the vertices of $G$ so that adjacent vertices get different colors.
Thus, our motivating problem is to estimate $\chi(\cc(S))$, the fewest number of parts required to partition the curves on $S$ so that any two curves in a given part intersect.

Bestvina, Bromberg, and Fujiwara were the first to study the quantity $\chi(\cc(S))$.
They showed that it is finite en route to proving that the mapping class group $\Mod(S)$ has finite asymptotic dimension \cite[Lemma 5.6]{Bestvinaquasitree}.
Curve graphs are locally infinite, so the finite colorability is not at all apparent \emph{a priori}.
Their bound on $\chi(\cc(S_g))$ for a closed surface of genus $g$ is doubly-exponential in $g$, which they did not attempt to optimize.
By contrast, a simple lower bound on $\chi(G)$ comes from its \emph{clique number} $\omega(G)$, the size of the largest complete subgraph.
Maximum cliques in $\cc(S)$ correspond to pants decompositions of $S$, so $\omega(\cc(S_g)) = 3g-3$ for $g \ge 2$.
Our work was motivated in part to close the gap between the bounds $\Omega(g) = \chi(\cc(S_g)) = O(\exp(\exp(g)))$. (See \S\ref{subsec: notation} for the notation $\Omega$, $O$, and $\Theta$.)

Another source of motivation comes from the study of {\em topological designs} \cite{Juvansystems}.
An attractive unsolved problem in this area is to determine the size of a largest \emph{1-system} on $S_g$, i.e.~a collection of simple closed curves that pairwise intersect at most once.
Denote this value $N(g)$.
Malestein, Rivin, and Theran proved that $\Omega(g^2) = N(g) = O(\exp(g))$ and that the size of a largest collection of curves that pairwise intersect {\em exactly} once is $2g+1$ \cite[Thms.~1.1\&1.4]{MalesteinDesigns}. 
P. Przytycki dramatically improved the upper bound to $N(g) = O(g^3)$ \cite[Thms.~1.2\&1.4]{PrzytyckiArcs}.
A color class in a proper coloring of the subgraph of $\cc(S_g)$ induced on a $1$-system is precisely a collection of curves that pairwise intersect exactly once.
It follows that $N(g) \le (2g+1) \chi(\cc(S_g))$.
Therefore, a sub-quadratic upper bound on $\chi(\cc(S_g))$ would improve on the best current upper bound on $N(g)$;
conversely, large constructions of 1-systems would lead to improved lower bounds on $\chi(\cc(S_g))$.


\subsection{Prelude.}
\label{subsec: prelude}
We set the stage for our results with a brief, informal account of the case of the $n$-holed sphere $\Sigma_n$.
A curve on $\Sigma_n$ partitions its holes into two non-empty parts.
Disjoint curves give rise to \emph{nested} partitions: one part for one curve is properly contained in one part for the other curve.
We thereby obtain a \emph{homomorphism} from $\cc(\Sigma_n)$ to a finite graph $\kg(n)$ that records partitions of the holes and the nesting relation.
On the other hand, arranging the holes of $\Sigma_n$ around a circle, we obtain a finite subgraph $\cg(n)$\footnote{For convenience, in \S \ref{subsec: prelude}, we let $\cg(n)$ denote the graph $\cg(n) \smallsetminus \cg(n,1)$ defined in \S \ref{sec: total kneser}.} 
of those curves that surround a cyclic interval of holes. 
We thus obtain a sequence of homomorphisms $\cg(n) \to \cc(\Sigma_n) \to \kg(n)$, the first of which is an embedding.
Homomorphisms compose, and a $t$-coloring is a homomorphism with target a complete graph $K_t$.
It follows that $\chi(\cg(n)) \le \chi(\cc(\Sigma_n)) \le \chi(\kg(n))$.
The graphs $\kg(n)$ and $\cg(n)$ are natural unions of the well-studied two-parameter families of Kneser graphs $\kg(n,k)$ and cyclic interval graphs $\cg(n,k)$. 
As we show, their chromatic numbers both grow like $n \log n$, and in fact are within a factor of $\ln(2) \approx 0.69$ apart.
In this way, we obtain a very precise estimate on $\chi(\cc(\Sigma_n))$.

The graph $\cg(n)$ repeatedly arises as a subgraph of arc and curve graphs, as does $\kg(n)$ as a target for homomorphisms from these graphs.
The tight control we gain over the (fractional) chromatic numbers of $\cg(n)$ and $\kg(n)$ underpins many of our results for curve graphs.
Other researchers have used the graph $\cg(n)$ as a probe of the large-scale geometry of the curve complex $\cc(\Sigma_n)$.
Embeddings of $\cg(n)$ into $\cc(\Sigma_n)$ are \emph{rigid} in the sense that any two are related by an automorphism of $\cc(\Sigma_n)$ \cite{Aramayonafinite}.
The induced subcomplex on $\cg(n)$ is homeomorphic to an $(n-4)$-dimensional sphere  \cite{Leeassociahedron}, and it represents a generator of the homology of $\cc(\Sigma_n)$ as a $\Mod^{\pm}(\Sigma_n)$-module \cite{Birmanfinite}.
Thus, it is remarkable -- or not? -- that it also accounts for the order of growth of $\chi(\cc(\Sigma_n))$.
We note that the {\em maximal clique graph} $\ck(\cg(n))$ (see \S\ref{subsec: notation}) is isomorphic to the \emph{associahedron} or the \emph{flip graph on the triangulations of an $n$-gon}.
The determination of the chromatic number of $\chi(\ck(\cg(n)))$ is a fascinating open problem: it is not even known whether it grows unbounded in $n$ \cite[\S\S4\&6]{FabilaChromatic}, \cite{Sleatorrotation}.


\subsection{Statement of results.}
\label{subsec: results}

In \S \ref{sec: total kneser}, we introduce the total Kneser graph $\kg(n)$ and the total cyclic interval graph $\cg(n)$.
In spite of how naturally they arise in the present setting, it appears that neither has been studied before.
We determine the order of growth of their (fractional) chromatic numbers:

\begin{Thm}
\label{thm: kneser chromatic intro}
The fractional and ordinary chromatic numbers of $\cg(n)$ and $\kg(n)$ are all $\Theta(n \log n).$
\end{Thm}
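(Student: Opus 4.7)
The plan is to obtain matching $\Theta(n\log n)$ upper and lower bounds for each chromatic number, with the lower bound on $\chi(\kg(n))$ following from that for $\chi(\cg(n))$: every cyclic interval partition is a two-subset partition, and the nestedness relation defining edges is the same in both, so $\cg(n)$ sits in $\kg(n)$ as an induced subgraph and $\chi(\kg(n))\ge\chi(\cg(n))$.

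For the upper bound $\chi(\cg(n))=O(n\log n)$, I stratify the chord set by cyclic length $L\in\{1,\ldots,\lfloor n/2\rfloor\}$. The $n$ length-$L$ chords are parametrized by their starting endpoint, and two such are nested iff the cyclic distance between their starting points is at least $L$; hence each block of $L$ consecutive starting positions is a pairwise non-adjacent set, and the length-$L$ subgraph admits a proper coloring with $\lceil n/L\rceil$ colors. Using disjoint palettes across the lengths automatically separates any nested pair of chords of different lengths, producing an overall proper coloring with $\sum_{L=1}^{\lfloor n/2\rfloor}\lceil n/L\rceil=\Theta(n\log n)$ colors.

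For the lower bound $\chi(\cg(n))=\Omega(n\log n)$, I pass to the fractional chromatic number, weighting each chord $c$ by $w(c)=1/L_c$. The total weight is $\sum_L n/L = nH_{\lfloor n/2\rfloor}=\Theta(n\log n)$. The key claim is that every independent set — every pairwise crossing family $F$ of chords — has $w$-weight at most $1$: crossing chords must have pairwise disjoint endpoint sets (sharing an endpoint would make them nested, not crossing), so fixing the minimum-length chord $c^*\in F$ of length $L^*$, every other $c\in F$ has exactly one endpoint among the $L^*-1$ interior points of the short arc of $c^*$; this caps $|F|$ at $L^*$ and yields $\sum_{c\in F}1/L_c\le|F|/L^*\le 1$. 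The weighted LP characterization of $\chi_f$ then forces $\chi_f(\cg(n))\ge\Theta(n\log n)$, and $\chi\ge\chi_f$ completes the lower bound (which transfers to $\kg(n)$ as above).

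The main obstacle will be the upper bound $\chi(\kg(n))=O(n\log n)$, since $\kg(n)$ has exponentially many more vertices than $\cg(n)$ and the naive coloring that treats each size class separately — applying the classical Lov\'asz coloring of $\kg(n,k)$ with $n-2k+2$ colors per $k$ — gives only the quadratic bound $\sum_{k=1}^{\lfloor n/2\rfloor}(n-2k+2)=\Theta(n^2)$. I plan to extend the classical optimal Kneser coloring across all sizes simultaneously: the rule $A\mapsto\min(A)$ already distinguishes disjointness-type nestings (since $A_1\subseteq B_2$ forces $\min A_1\neq\min A_2$), so the crux is separating containment-type nestings $A_1\subsetneq A_2$ with $\min(A_1)=\min(A_2)$. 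The plan is to partition sizes into $O(\log n)$ dyadic levels and attach a level-indexed secondary tag, so that within each level the palette has size $O(n)$ and across levels the tag itself separates any nested pair. The delicate point is calibrating the threshold of Kneser's ``special'' color and the secondary tag simultaneously, so as to resolve same-level containments while keeping the per-level palette at $O(n)$, yielding the overall $O(n\log n)$ count.
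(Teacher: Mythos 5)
Your handling of $\cg(n)$ and of the lower bounds is correct and is essentially the paper's own route: the weight $w(c)=1/L_c$ is exactly the paper's fractional clique on $\cg(n)$, and your cap $|F|\le L^*$ for a pairwise-crossing family is its independent-set lemma, proved the same way via the linked-chord description of the complement; the stratified coloring of $\cg(n)$ with $\sum_{L}\lceil n/L\rceil$ colors amounts to summing the classical colorings of the $\cg(n,L)$. (One small imprecision: two chords sharing an endpoint are nested \emph{or disjoint}, not necessarily nested, but either way they are adjacent, which is all your counting argument needs.)

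The genuine gap is the upper bound $\chi(\kg(n))=O(n\log n)$, which is the actual content of the theorem for $\kg(n)$ and which you leave as a plan with the ``delicate point'' unresolved. The plan as sketched does not yet work: within a single dyadic level of sizes $[2^k,2^{k+1})$ there are nested chains of length $2^k$ all sharing the same minimum, so a tag based on $\min(A)$ together with an $O(n)$-size palette per level cannot separate same-level containments; the secondary tag must depend on the set more finely than its minimum. The paper's resolution is to let the tagged element slide with the size: write $|A|=2^{k+1}-l$ with $1\le l\le 2^k$ and color $A$ by the pair $(k,a)$, where $a$ is the $l$-th largest element of $A$. If distinct $A,B$ receive the same color $(k,a)$, then $a\in A\cap B$, so they are not disjoint; and if $|A|<|B|$, then $l(A)>l(B)$, so $A$ contains more elements $\ge a$ than $B$ does, which is incompatible with $A\subseteq B$ (and $B\not\subseteq A$ since $|B|>|A|$). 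This uses $n\cdot\lceil\log_2(n/2)\rceil+1$ colors (the extra color handling the half-size subsets when $n$ is a power of $2$). Until you specify and verify a tag with these two properties, the $\kg(n)$ half of the statement is unproved in your write-up.
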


(Again, see \S\ref{subsec: notation} for the notation $\Omega$, $O$, and $\Theta$.)
Moreover, the implied constants in the bounds on $\chi(\kg(n))$ are within a factor of $\ln(2)$.
The determination of the chromatic number of the ordinary Kneser graph $\kg(n,k)$ was the content of a famous conjecture due to Kneser \cite{Kneseroriginal} and settled in a celebrated theorem of Lov\'asz \cite{LovaszKneser}.
Kneser exhibited a proper coloring of $\kg(n,k)$ using $n-2k+2$ colors, and Lov\'asz proved its optimality by defining the neighborhood complex $N(G)$ of a graph $G$, showing that the connectivity of $N(G)$ bounds $\chi(G)-3$ from below, and applying this bound to $\kg(n,k)$.
By contrast, the chief difficulty in Theorem \ref{thm: kneser chromatic intro} lies in establishing the upper bound on $\chi(\kg(n))$, which we accomplish by a variation on Kneser's original coloring of $\kg(n,k)$.

In \S \ref{sec: planar surfaces}, we formalize the argument from the Prelude and apply Theorem \ref{thm: kneser chromatic intro} in order to determine the order of growth of the chromatic number of the curve graph of the $n$-holed sphere $\Sigma_n$:

\begin{Thm}
\label{thm: planar chromatic intro}
The fractional and ordinary chromatic numbers of $\cc(\Sigma_n)$ are $\Theta(n\log n)$.
\end{Thm}

Prior to our work, Radhika Gupta obtained the estimate $\chi(\cc(\Sigma_n)) =O(n^2)$ \cite{Radhika}.

In \S \ref{sec: separating curves}, we generalize Theorem \ref{thm: planar chromatic intro} to
an estimate on the chromatic number of $\cs(S)$, the subgraph of $\cc(S)$ induced on the separating curves of an arbitrary compact surface $S$:

\begin{Thm}
\label{thm:separating}
If $S$ has Euler characteristic $-k < 0$, then $\chi(\cs(S)) = \Theta\left( k \log k \right).$
\end{Thm}

As with the case of a planar surface, the bounds come from embedding a cyclic interval graph into $\cs(S)$ and mapping it to a Kneser graph.
However, the homomorphism to the Kneser graph is subtler in this more general setting.
Drawing inspiration from \cite{BuserDistribution}, we place a hyperbolic metric on $S$ compatible with a pants decomposition in which all of the pant cuffs are very short.
The simple closed geodesics on this hyperbolic surface congregate near a 1-complex whose complement consists of $12k$ regions.
A separating simple closed geodesic partitions these regions into two parts, and disjoint geodesics yield distinct nested partitions.
In this way, we obtain the required homomorphism to the Kneser graph $\kg(12k)$.
Theorem \ref{thm: hyperbolic} contains the precise statement that we require, and we give a careful argument through a sequence of lemmas in hyperbolic geometry.
Ian Biringer has suggested an alternate description of this homomorphism based instead on train tracks, thereby eliminating the need for hyperbolic geometry in proving Theorem \ref{thm:separating}.

In \S \ref{sec: homologous}, we study the subgraph $\cc_v(S)$ of $\cc(S)$ induced on the curves that represent a fixed non-zero homology class $v \in H_1(S;\bz)$.
In this case, we obtain not only an exact answer but a uniqueness result:

\begin{Thm}
\label{thm: cc_v intro}
For any $v\ne0$, $\cc_v(S)$ is uniquely $t$-colorable, where $t$ denotes the clique number of $\cc_v(S)$.
\end{Thm}

See Theorems \ref{thm: cc_v(S_g)}, \ref{thm: C_v uniquely colorable}, and \ref{thm: non-nullhomologous} for more precise statements.
The proof of Theorem \ref{thm: cc_v intro} relies on a different set of techniques than those appearing up to this point.
The color of a curve in $\cc_v(S)$ is based on the genus $h$ of an immersed subsurface that it cobounds with a fixed reference curve in its homology class: for instance, when $S$ is closed and has genus $g$, the color is the value $h \pmod{g-1}$.
In fact, the coloring coincides with the {\em signed length} introduced by Irmer \cite[\S 4.1]{Irmerchillingworth}.
As a byproduct, the coloring permits an interpretation of signed length in terms of genera of immersed surfaces in the case of a closed surface; moreover, the use of domains in \S \ref{subsec: domains} makes it easy to calculate.
To prove that the coloring of $\cc_v(S)$ is unique, we show that it is possible to connect any two maximal cliques through a sequence in which each consecutive pair meet in all but one vertex: in other words, the maximal clique graph $\ck(\cc_v(S))$ is connected.
We do so by studying the action of a relevant mapping class group on $\ck(\cc_v(S))$ and applying Putman's trick \cite{Putmanconnectivity}. 

On combination of Theorems \ref{thm:separating} and \ref{thm: cc_v intro}, we bound the chromatic number of the curve graph of a closed surface as follows:

\begin{Thm}
\label{thm: s_g intro}
$g \cdot \log g \le \chi(\cc(S_g)) \le g \cdot 4^g$.
\end{Thm}

The lower bound comes simply from separating curves.
The upper bound comes from partitioning curves according to their$\pmod 2$ homology classes and coloring the curves in each class separately.
The zero class consists of the separating curves, and each of the $4^g-1$ non-zero classes $\overline{v}$ are disjoint unions of the graphs $\cc_v(S_g)$, where $v$ reduces to $\overline{v} \pmod 2$.
Theorem \ref{thm: cc_v intro} and a small maneuver around the axiom of countable choice (Proposition \ref{prop: (mod m)}) lets us color all the curves representing $\overline{v}$ by $g-1$ colors.
The upper bound in Theorem \ref{thm: s_g intro} then follows.

Thus, the chromatic number of $\cc(S_g)$ is super-linear and at most exponential in $g$.
These bounds leave great room for improvement, and we suspect the truth lies closer to the lower bound.
On the other hand, a construction of 1-systems on $S_g$ of size larger than $g^2 \cdot \log g$ would lead to an improvement on our lower bound.

In \S \ref{sec: permuting colors}, we explore a consequence of Theorem \ref{thm: cc_v intro} for a closed surface $S = S_g$.
Set $H = H_1(S,\bz)$ and let $\ci < \Mod(S)$ denote the \emph{Torelli group}, the mapping classes that act trivially on $H$.
For each primitive non-zero class $v \in H$, the group $\ci$ acts by automorphisms on $\cc_v(S)$.
By uniqueness of the minimal coloring, it permutes the color classes in a minimal coloring. 
In fact, it permutes them cyclically, and the action determines a homomorphism $\chi:\ci \to \Hom(H,\bz/(g-1)\bz)$ (Lemma \ref{lemma: chi homo}).
We explicitly compute the action of generators for $\ci$ on the color classes, and we obtain a relationship between the coloring of $\cc_v(S)$ and the \emph{Johnson homomorphism}.
The latter is a homomorphism $\tau:\ci \to \Hom(H,\bigwedge^2 H)/H$ that captures the free part of $H_1(\ci;\bz)$.
Johnson showed that the composition of $\tau$ with the algebraic intersection pairing $\bigwedge^2 H \to \bz$ and reduction$\pmod{g-1}$ gives the \emph{Chillingworth homomorphism} $t:\ci \to \Hom(H,\bz/(g-1)\bz)$ \cite{ChillingworthwindingI,ChillingworthwindingII,Johnsonabelian}. We show:

\begin{Thm}
\label{thm: coloring permutation}
The color permutation homomorphism $\chi$ equals the Chillingworth homomorphism $t$.
\end{Thm}

In fact, the identification of the coloring of $\cc_v(S)$ with Irmer's signed length immediately implies that the coloring homomorphism $\chi$ is equal to her \emph{stable length homomorphism} $\phi$ \cite[Lemma 5]{Irmerchillingworth}.
Irmer proves Theorem \ref{thm: coloring permutation} with $\phi$ in place of $\chi$ \cite[Thm.~1]{Irmerchillingworth}, and our proof mimics hers. 
Thus, the novelty in Theorem \ref{thm: coloring permutation} compared to Irmer's work is its interpretation in terms of coloring.
As a result, in Corollary \ref{cor: re-coloring} we recast the coloring of $\cc_v(S)$ in terms of $t$.

In \S \ref{sec: arc graphs}, we explore analogues of the preceding results for arc graphs.
The vertices of the arc graph $\ca(S)$ are isotopy classes of essential properly embedded arcs on $S$,  and two classes are adjacent if they have disjoint representatives.
Here the isotopy is free on the boundary.
We specialize to the case of a planar surface.
We show that the chromatic number of the subgraph $\ca \cs(\Sigma_n)$ induced on the separating arcs grows like $n \log n$ (Theorem \ref{thm: separating arcs}) and the subgraph of $\ca(\Sigma_n)$ induced on arcs representing a fixed non-zero homology class is uniquely $(n-2)$-colorable (Theorem \ref{thm: arcs unique}).
The analogue to Theorem \ref{thm: s_g intro} in this setting is:

\begin{Thm}
\label{thm: planar arc graph intro}
$\Omega(n \log n) = \chi(\ca(\Sigma_n)) = O(n^3)$.
\end{Thm}

The fact that the upper bound is polynomial and not exponential derives from the fact that the number  of$\pmod 2$ homology classes of arcs on $\Sigma_n$ is quadratic in $n$, whereas the number of$\pmod 2$ homology classes of curves on $S_g$ is exponential in $g$.
Doubling $\Sigma_n$ along its boundary induces an inclusion $\ca(\Sigma_n) \into \cc(S_{n-1})$.
Thus, an improved lower bound on $\chi(\ca(\Sigma_n))$ would result in a corresponding improvement on $\chi(\cc(S_g))$, and conversely for the upper bounds.
It seems likely, albeit less direct, that an improvement on the lower bound on $\chi(\cc(S_g))$ would inform one on $\chi(\ca(\Sigma_n))$, and conversely for the upper bounds.

In \S \ref{sec: four-holed} and \S\ref{sec: genus 2}, we obtain exact results for surfaces of low complexity.
In \S \ref{sec: four-holed}, we study $\ca(\Sigma_4)$ and various subgraphs of it.
For example, we show that the subgraph of $\ca(\Sigma_4)$ induced on arcs with exactly one endpoint on a fixed boundary component has chromatic number 4, with color classes corresponding to the orbits under the action of the level-3 congruence subgroup $\Gamma(3) < \mathrm{PSL}(2,\bz)$ on $P^1(\bz^2)$ (Theorem \ref{thm: chi(a)=4}).
In \S\ref{sec: genus 2}, we study $\cc(S_2)$ and its subgraph $\cn(S_2)$ induced on the nonseparating curves.
Using the hyperelliptic involution, we show:

\begin{Thm}
\label{thm: S_2}
$\chi(\cn(S_2)) = 4$ and $\chi(\cc(S_2))=5$.
\end{Thm}

Finally, in \S \ref{sec: conclusion}, we collect some questions for further study.


\subsection{Conventions and Notation}
\label{subsec: notation}
If $f$ and $g$ denote two real-valued functions, then we write $f = O(g)$ if there exists an absolute constant $C > 0$ such that $f \le C \cdot g$.
We write $f = \Omega(g)$ if $g = O(f)$, and we write $f = \Theta(g)$ if $f = \Omega(g)$ and $f = O(g)$.

All surfaces appearing in our results are compact, connected, and orientable.
We denote by $S_g^b$ a surface of genus $g$ with $b$ boundary components, or {\em holes}.  If $b=0$, then we suppress it from the notation, and if $g=0$, then we write $\Sigma_n = S^n_0$.

The \emph{mapping class group} of $S$ is the group $\Mod(S) := \pi_0 ( \mathrm{Homeo}^+(S))$. 
For convenience, we break slightly with the convention of \cite{Farbprimer} by allowing $S$ to permute boundary components; the subgroup acting trivially on boundary components is denoted by $\PMod(S)$. 
This difference is relevant in the proofs of Proposition \ref{prop: connected} and Theorem \ref{thm: arcs unique}.

A simple closed curve on a surface is \emph{separating} if its complement is disconnected; it is \emph{peripheral} if it is isotopic to a hole; and it is \emph{essential} if it neither is peripheral nor bounds a disk.
Similarly, a homology class is \emph{separating} or \emph{peripheral} if it is represented by an oriented simple closed curve with the corresponding property.

We denote the curve (resp.~arc) graph by $\cc(S)$ (resp.~$\ca(S)$), and the subgraphs induced by separating and non-separating curves (resp.~arcs) by $\cs(S)$ and $\cn(S)$ (resp.~$\ca\cs(S)$ and $\ca\cn(S)$). 
Given a homology class $v$ (resp.~relative to $\partial S$), we denote the subgraph spanned by curves (resp.~arcs) which can be oriented to be homologous to $v$ by $\cc_v(S)$ (resp.~$\ca_v(S)$).

When convenient, we elide the difference between a graph and the flag simplicial complex with the same 1-skeleton (in which any complete subgraph on $k$ vertices spans a unique $(k-1)$-simplex). 
For example, $\cc(S)$ may also denote the curve \emph{complex} associated to $S$.

Given a simplicial complex $\cc$, we let $\ck(\cc)$ denote the graph whose vertices consist of the maximal simplices in $\cc$, where two maximal simplices are adjacent if they meet in a codimension-1 face. We refer to $\ck(\cc)$ as the \emph{maximal clique graph} of $\cc$. The reader is cautioned that $\ck(\cc(S_g))$ is not quite the `clique graph' in the sense of \cite[p.~3]{Kim-Koberda}, in which vertices correspond to (not necessarily maximal) cliques.


\section*{Acknowledgments}
We thank Ian Biringer and Peter Feller for pleasant conversations throughout the course this work.
We especially thank Ian for explaining how to use train tracks in place of hyperbolic geometry in the proof of Theorem \ref{thm:separating}.
We also thank Ken Bromberg for helpful email correspondence.
NGV is grateful to Peter Heinig for bringing the chromatic number of the curve graph to his attention.
JEG was supported by NSF CAREER Award DMS-1455132 and an Alfred P. Sloan Foundation Research Fellowship.
NGV was supported in part by NSF RTG grant 1045119.


\section{Kneser graphs and cyclic interval graphs}
\label{sec: total kneser}

In this section, we introduce the total Kneser graph $\kg(n)$ and its subgraph the total cyclic interval graph $\cg(n)$.
These graphs are natural unions of the well-known Kneser graphs $\kg(n,k)$ and cyclic interval graphs $\cg(n,k)$.
We determine the fractional chromatic numbers of these graphs and the growth orders of their chromatic numbers in Theorems \ref{thm: kneser fractional chromatic} and \ref{thm: kneser chromatic}.
For background on graph theory, including discussion about homomorphisms, (fractional) chromatic numbers, and the two-parameter Kneser and cyclic interval graphs, see \cite[Ch.7]{Godsilgraphs}.

Given a pair of positive integers $n \ge 2k$, the {\em Kneser graph} $\kg(n,k)$ is the graph whose vertices are the $k$-element subsets of $\{1, \ldots, n\}$ and whose edges are unordered pairs of disjoint subsets.
A {\em cyclic interval} is a cyclic shift of the set $\{1,\dots,k\}$ modulo $n$.
The {\em cyclic interval graph} $\cg(n,k)$ is the subgraph of $\kg(n,k)$ induced on the cyclic intervals.

The fractional chromatic numbers of these graphs are well-known \cite[\S7.7]{Godsilgraphs}, 
and the determination of the chromatic number of the Kneser graph $\kg(n,k)$ is a celebrated theorem of Lov\'asz \cite{LovaszKneser}.
We record these values here:

\begin{Thm}
\label{thm: original kneser chromatic}
$\chi_f(\cg(n,k)) = \chi_f(\kg(n,k)) = n/k$, $\chi(\cg(n,k)) = \lceil n/k \rceil$, and $\chi(\kg(n,k)) = n-2k+2$. \qed
\end{Thm}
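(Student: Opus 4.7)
The plan is to dispatch the four equalities, with only one providing serious difficulty: the lower bound $\chi(\kg(n,k)) \ge n-2k+2$ is the celebrated Kneser conjecture, whose resolution in \cite{LovaszKneser} via the Borsuk-Ulam theorem constitutes the main obstacle. For this bound I would simply cite Lov\'asz's original proof rather than reprove it.

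For the fractional chromatic numbers, I would first establish the upper bound $\chi_f(\kg(n,k)) \le n/k$ via the fractional coloring assigning weight $1/k$ to each ``star'' $S_i = \{A : i \in A\}$ for $i \in \{1,\ldots,n\}$: each $S_i$ is independent (two disjoint $k$-sets cannot share an element), and each vertex lies in exactly $k$ stars, receiving total weight $1$. Since $\cg(n,k)$ is an induced subgraph, the same upper bound transfers. For the matching lower bound $\chi_f(\cg(n,k)) \ge n/k$, I would invoke the standard identity $\chi_f(G) = |V(G)|/\alpha(G)$ for vertex-transitive graphs: the cyclic group $\bz/n\bz$ acts transitively on the $n$ cyclic intervals, and any pairwise-intersecting family of cyclic intervals of length $k$ has at most $k$ members (ordering by starting position shows that two cyclic intervals whose starts differ by at least $k$ and at most $n-k$ are disjoint), with equality attained by the $k$ cyclic intervals containing a fixed element.

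For the integer chromatic number of $\cg(n,k)$, the lower bound $\chi(\cg(n,k)) \ge \lceil n/k \rceil$ is immediate from the fractional bound, and the explicit coloring $I_i \mapsto \lceil i/k \rceil$ (where $I_i = \{i, i+1, \ldots, i+k-1\}$ for $i \in \{1, \ldots, n\}$) supplies the matching upper bound, since any two same-colored intervals have starting positions differing by less than $k$, forcing them to overlap. For the remaining upper bound $\chi(\kg(n,k)) \le n - 2k + 2$, I would exhibit the coloring $A \mapsto \min\{\min A,\, n-2k+2\}$: if disjoint $k$-sets $A, B$ share a color $c < n-2k+2$, then both have $c$ as their minimum, contradiction; while color $n-2k+2$ is assigned only to $k$-subsets of $\{n-2k+2, \ldots, n\}$, a $(2k-1)$-element set that cannot contain two disjoint $k$-sets. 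Combined with the Lov\'asz lower bound, this completes the picture.
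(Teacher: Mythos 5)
Your proposal is correct, and it largely follows the same skeleton the paper uses: cite Lov\'asz for the hard inequality $\chi(\kg(n,k)) \ge n-2k+2$, use the ``star'' fractional coloring (weight $1/k$ on each $S_i=\{A: i\in A\}$) for $\chi_f(\kg(n,k))\le n/k$, and use Kneser's coloring by least element, truncated at $n-2k+2$, for the upper bound on $\chi(\kg(n,k))$ --- your map $A\mapsto\min\{\min A,\,n-2k+2\}$ is exactly the paper's partition into $T_1,\dots,T_{n-2k+1},T_{\ge n-2k+2}$. The one genuine divergence is the certificate for the lower bound $\chi_f(\cg(n,k))\ge n/k$: the paper exhibits an explicit fractional clique, namely the constant weight $1/k$ on $V(\cg(n,k))$, whose validity amounts to the same combinatorial fact you use, that an independent (i.e.\ pairwise-intersecting) family of length-$k$ cyclic intervals has at most $k$ members; you instead package this as $\alpha(\cg(n,k))=k$ together with the vertex-transitivity identity $\chi_f(G)=|V(G)|/\alpha(G)$ (in fact only the inequality $\chi_f\ge |V|/\alpha$, valid for all graphs, is needed, so the transitivity hypothesis is superfluous). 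The paper's fractional-clique formulation is deliberate: it is the version that generalizes in Lemma \ref{l: indep set bound} and Theorem \ref{thm: kneser fractional chromatic} to the total cyclic interval graph $\cg(n)$, where the vertex set mixes intervals of different lengths, the graph is not vertex-transitive, and the weight must be taken to be $1/|A|$ rather than constant; your route gives the two-parameter case just as cleanly but does not extend. You also supply details the paper leaves to citation --- the $\lceil n/k\rceil$ coloring $I_i\mapsto\lceil i/k\rceil$ of $\cg(n,k)$ and the explicit check that same-colored intervals meet --- which is fine and correct (note the paper records $\chi(\cg(n,k))=\lceil n/k\rceil$ from \cite{Scheinermanfractional} without proof). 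Minor bookkeeping: after proving $\chi_f(\cg(n,k))\ge n/k$ you should state explicitly that this transfers to $\kg(n,k)$ via the subgraph inclusion, closing the chain of equalities.
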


Given a positive integer $n \ge 2$, the {\em total Kneser graph} $\kg(n)$ is the graph whose vertices are partitions of $\{1,\dots,n\}$ into an unordered pair of non-empty disjoint subsets $(A,B)$.
We often express a partition just by one of its parts, since there is no loss of information.
Two such partitions $(A,B)$, $(C,D)$ are {\em nested} if one of $A$ or $B$ is contained in one of $C$ 
or $D$; note that the condition is symmetric in the two pairs.
The edges of $\kg(n)$ are pairs of distinct nested partitions.
The {\em total cyclic interval graph} $\cg(n)$ is the subgraph of $\kg(n)$ induced on partitions in which the parts are cyclic intervals.
Observe that for $k < \frac n2$, $\kg(n,k) \subset \kg(n)$ is induced on the partitions $(A,B)$ with $\min \{ |A|, |B| \} = k$, and $\cg(n,k) = \cg(n) \cap \kg(n,k)$.  When $n=2k$, a mild discrepancy arises, but it does not influence our results.

A cyclic interval contains a {\em minimal element}, the shift of 1.
We label a vertex $(A,B) \in \cg(n)$ by the pair $(i,j)$ that records the minimal elements of $A$ and $B$, with the convention that $i < j$.
Two such pairs $(i,j)$ and $(i',j')$ are {\em linked} if $i < i' < j < j'$ or $i' < i < j' < j$.
Under the labeling by pairs, edges in $\cg(n)$ correspond precisely to unlinked pairs.

\begin{Lem}
\label{l: indep set bound}
If $S$ is an independent set in $\cg(n)$, then $|S| \le \min\{|A| \, | \, (A,B) \in S\}.$
\end{Lem}

\begin{proof}
Choose any $(A,B) \in S$.
By applying a cyclic shift to $\cg(n)$, we may assume that $j-i = |A|$, where $(A,B)$ gets labeled $(i,j)$.
Since $S$ is an independent set, the labels in $S$ are pairwise linked.
Select any other label $(i',j')$ in $S$.
Note that exactly one of $i'$ and $j'$ lies between $i$ and $j$.
Furthermore, no two labels in $S$ share a common coordinate.
It follows that there are at most $j-i-1$ labels in $S$ that link with $(i,j)$, so $|S| \le j-i = |A|$, as desired.
\end{proof}

For a positive integer $m$, let $H_m$ denote the $m$-th harmonic number $\sum_{k=1}^m \frac 1k$,
and let $p(m) \in \{0,1\}$ denote the least non-negative residue of $m\pmod 2$.
Recall the bounds $\log(m) < H_m \le \log(m)+1$ for the following result.

\begin{Thm}
\label{thm: kneser fractional chromatic}
 $\chi_f(\kg(n)) = \chi_f(\cg(n)) = n \cdot H_{\lfloor (n-1)/2 \rfloor} + (1-p(n)) = \Theta(n \log(n))$.
\end{Thm}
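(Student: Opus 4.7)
The plan is to sandwich the fractional chromatic numbers using the subgraph inclusion $\cg(n) \subset \kg(n)$, which immediately gives $\chi_f(\cg(n)) \le \chi_f(\kg(n))$, and then to produce a matching upper bound on the right and lower bound on the left, each equal to the claimed value $n H_{\lfloor (n-1)/2 \rfloor} + (1-p(n))$.

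For the upper bound on $\chi_f(\kg(n))$, I would piece together level-by-level fractional colorings mimicking the construction reviewed in \S\ref{subsec: ordinary kneser}. For each size $1 \le k < n/2$ and each $i \in \{1,\ldots,n\}$, the set $S_{i,k} = \{A \in V : |A| = k,\; i \in A\}$ is an independent set in $\kg(n)$: its members all have cardinality $k$, which rules out nesting, and they share the element $i$, which rules out disjointness. Each $k$-subset lies in exactly $k$ of these, so assigning each $S_{i,k}$ weight $1/k$ fractionally colors the level-$k$ part of $V$ with total weight $n/k$. When $n$ is even, the top-level wrinkle is handled separately: the collection of all $\tfrac n2$-subsets appearing in $V$ is itself an independent set, since distinct $\tfrac n2$-subsets cannot be nested, and can only be disjoint if complementary, but $V$ contains only one representative from each such pair. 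Assigning this top level a single additional color and summing gives $\sum_{k=1}^{\lfloor (n-1)/2 \rfloor} n/k + (1-p(n))$, as desired.

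For the lower bound, I plan to exhibit a fractional clique on $\cg(n)$ of the same total weight, with Lemma~\ref{l: indep set bound} as the essential ingredient. Set $w(A) = 1/|A|$ for each $A \in V(\cg(n))$. For any independent set $S$ in $\cg(n)$, letting $m = \min\{|A| : A \in S\}$, the lemma gives $|S| \le m$, so $\sum_{A \in S} w(A) \le |S|/m \le 1$, confirming $w$ is a fractional clique. Counting cyclic intervals size by size---$n$ of each size $k < n/2$, plus $n/2$ representatives at the top level when $n$ is even---yields total weight $n H_{\lfloor(n-1)/2\rfloor} + (1-p(n))$, and the asymptotic $\Theta(n\log n)$ follows from $H_m = \log m + O(1)$. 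The main subtlety throughout is the bookkeeping for the $n$ even case at the top level, where both sides deviate from the generic levelwise pattern to produce the $(1-p(n))$ term; once one recognizes that Lemma~\ref{l: indep set bound} is precisely the tool needed to render the weights $1/|A|$ tight on the cyclic-interval side, the remainder of the argument is routine.
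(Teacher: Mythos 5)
Your proposal is correct and follows essentially the same route as the paper: the upper bound by summing the levelwise fractional colorings of the vertex-disjoint pieces $\kg(n,k)$ (the sets $S_{i,k}$ with weight $1/k$, exactly the coloring recalled in \S\ref{subsec: ordinary kneser}) plus the single independent top level when $n$ is even, and the lower bound via the fractional clique $w(A)=1/|A|$ on $\cg(n)$ justified by Lemma~\ref{l: indep set bound}, with the two bounds sandwiching both $\chi_f(\cg(n))$ and $\chi_f(\kg(n))$. Your bookkeeping of the $(1-p(n))$ term and the count of $n/2$ top-level representatives matches the paper's.
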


\begin{proof}
The fractional chromatic number of $\kg(n)$ is bounded above by the sum of the fractional chromatic numbers of its vertex-disjoint induced subgraphs $\kg(n,k)$, $1 \le k < n/2$, and, if $n$ is even, the independent set induced on the partitions into $\frac n2$-subsets.
Since $\chi_f(\kg(n,k)) = n/k$ by Theorem \ref{thm: original kneser chromatic}, we obtain the required upper bound.

Next, define $w: V(\cg(n)) \to \br_{\ge 0}$ by $w((A,B)) = 1 / |A|$, where $|A| \le |B|$.
Let $S$ be an independent set in $\cg(n)$.
Select $(A,B) \in S$ with $|A|$ minimal.
We have $w(S) \le |S| \cdot w(A) \le |A| / |A| = 1$, using Lemma \ref{l: indep set bound} in the second inequality.
Therefore, $w$ is a fractional clique.
Its total value equals the required lower bound.
\end{proof}

\begin{Rem}
The fractional clique defined in the proof of Theorem \ref{thm: kneser fractional chromatic} is simply the sum of the optimal fractional cliques for the subgraphs $\cg(n,k)$ that get used to establish their fractional chromatic numbers (Theorem \ref{thm: original kneser chromatic}).
\end{Rem}

The union bound  leads to the soft estimate
\[
\chi(\kg(n)) \le \sum_{k=1}^{\lfloor n/2 \rfloor} \chi(\kg(n,k)) = \sum_{k=1}^{\lfloor n/2 \rfloor} (n-2k+2) = \Theta(n^2),
\]
which grows faster than $\chi_f(\kg(n))$.
The following Theorem uses a refined coloring to show that $\chi_f(\kg(n))$ and $\chi(\kg(n))$ grow at the same rate, and in fact differ by a factor of no more than $\ln(2) \approx 0.69$.
It is a variation on Kneser's original $(n-2k+2)$-coloring of $\kg(n,k)$.

\begin{figure}
\includegraphics[width=3.5in]{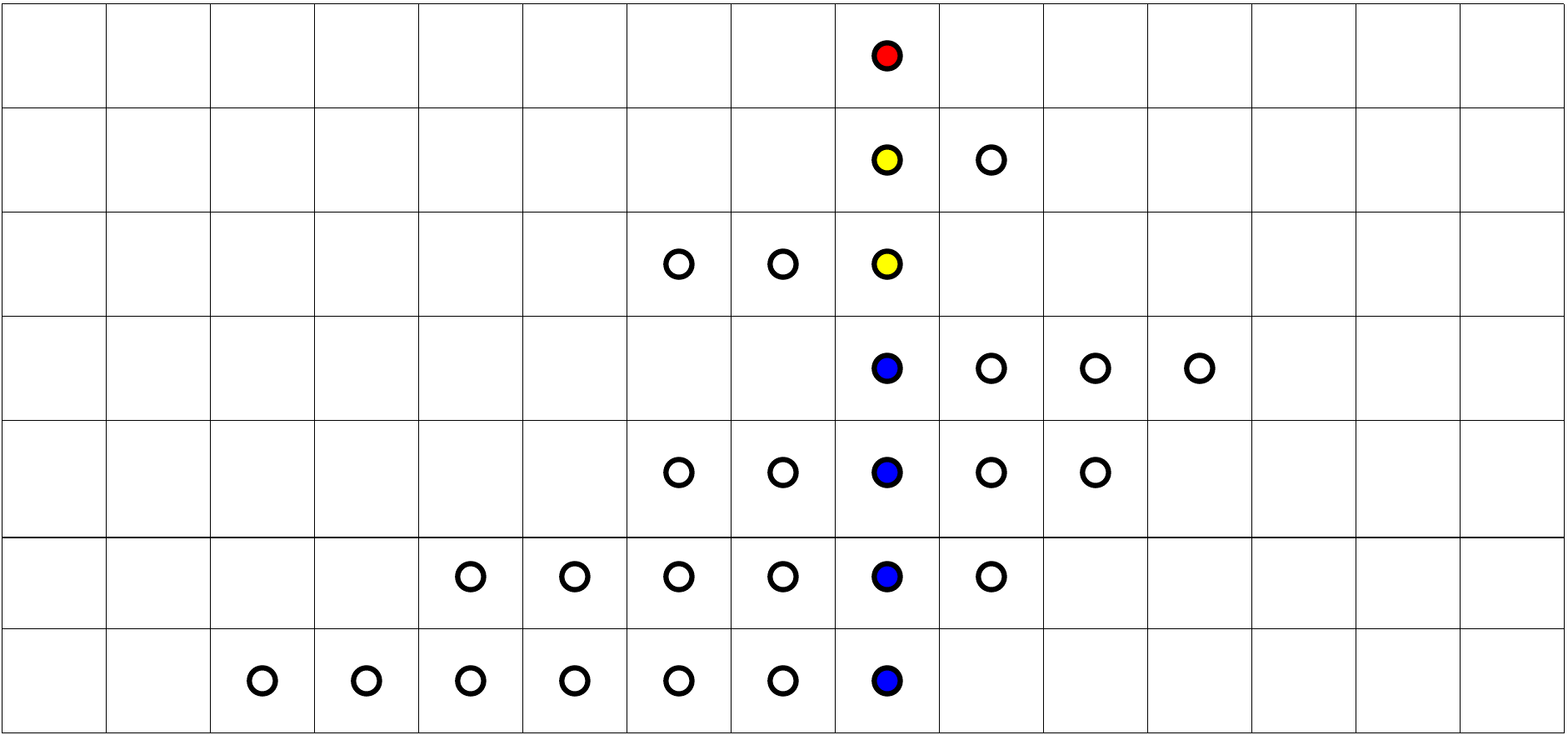}
\put(-246,123){1}
\put(-230,123){2}
\put(-214,123){3}
\put(-197,123){4}
\put(-180,123){5}
\put(-163,123){6}
\put(-146,123){7}
\put(-129,123){8}
\put(-112,123){9}
\put(-98,123){10}
\put(-81,123){11}
\put(-64,123){12}
\put(-47,123){13}
\put(-30,123){14}
\put(-13,123){15}
\put(-268,107){$A_1$}
\put(-268,90){$A_2$}
\put(-268,73){$A_3$}
\put(-268,56){$A_4$}
\put(-268,39){$A_5$}
\put(-268,22){$A_6$}
\put(-268,5){$A_7$}
\caption{Vertices $A_i \in \kg(15)$, $|A_i|=i$, $i=1,\dots,7$. $A_1$ gets color $(0,9)$; $A_2, A_3$ get color $(1,9)$; and $A_4,A_5,A_6,A_7$ get color $(2,9)$.
No two vertices of the same color are disjoint or nested.
}
\label{fig: kneser}
\end{figure}

\begin{Thm}
\label{thm: kneser chromatic}
$\chi(\kg(n)) \leq n\cdot \left\lceil \log_2\left(\frac n2\right) \right\rceil +1= O(n \log(n)).$
\end{Thm}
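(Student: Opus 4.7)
The plan is to build the coloring layer by layer. I would first partition $V(\kg(n))$ into $J := \lceil \log_2(n/2) \rceil$ ``dyadic size blocks'' $U_1, \ldots, U_J$, where $U_j$ consists of subsets whose cardinality lies in $B_j := \{2^{j-1}, 2^{j-1}+1, \ldots, 2^j - 1\} \cap \{1, \ldots, \lfloor(n-1)/2\rfloor\}$, plus one extra class for the $(n/2)$-subsets when $n$ is even. As observed in Section \ref{subsection:total Kneser graph}, the latter set is independent in $\kg(n)$ and can be colored with a single color, contributing the ``$+1$''. The remaining task is to show that the subgraph of $\kg(n)$ induced on $U_j$ admits a proper $n$-coloring, using a palette disjoint from the other blocks; summing over $j$ then gives the required total of $nJ + 1$.

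For each $c \in \{1, \ldots, n\}$ and each block $j$, I would introduce the candidate color class
\[
T_{c,j} \;:=\; \bigl\{A \in U_j \,:\, c \text{ is the } p_j(|A|)\text{-th smallest element of } A\bigr\},
\]
where $p_j$ is a rank function on $B_j$ yet to be chosen. Provided $1 \le p_j(k) \le k$ for all $k \in B_j$, the classes $\{T_{c,j}\}_{c=1}^n$ cover $U_j$: each $A$ belongs to $T_{c,j}$ for $c$ equal to its $p_j(|A|)$-th smallest element. Independence of $T_{c,j}$ splits into two pieces: no two members are disjoint, which is automatic since every member contains $c$; and no pair $A \subsetneq C$ coexists inside a single $T_{c,j}$.

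The hard part is selecting $p_j$ so that the non-nesting condition holds. The naive Kneser-style choice $p_j \equiv 1$ (color by minimum) handles disjointness but allows nested pairs with common minimum. To rule these out, I would count the elements of $A$ exceeding $c$: a set $A \in T_{c,j}$ has exactly $|A| - p_j(|A|)$ such elements. Since $A \subsetneq C$ implies $|A \cap (c, n]| \le |C \cap (c, n]|$, one wants $|A| - p_j(|A|)$ to \emph{strictly decrease} as $|A|$ grows over $B_j$, which dictates the choice $p_j(k) := 2k - 2^j + 1$. Routine arithmetic confirms that $1 \le p_j(k) \le k$ throughout $B_j$ and that $|A| - p_j(|A|) = 2^j - |A| - 1$ is strictly decreasing.

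With this $p_j$ in hand, the verification is quick: if $A, C \in T_{c,j}$ with $A \subsetneq C$, then
\[
|A \cap (c, n]| \;=\; 2^j - |A| - 1 \;>\; 2^j - |C| - 1 \;=\; |C \cap (c, n]|,
\]
contradicting $A \subset C$. Thus each $T_{c,j}$ is independent and $\{T_{c,j}\}_{c=1}^n$ is a proper $n$-coloring of the induced subgraph on $U_j$. Combining the $nJ$ colors across blocks with the single color for the $(n/2)$-subsets yields $\chi(\kg(n)) \le n\lceil\log_2(n/2)\rceil + 1$. Since $\chi(\kg(n)) \ge \chi_f(\kg(n)) = \Theta(n \log n)$ by Theorem \ref{thm: kneser fractional chromatic}, this bound is asymptotically tight.
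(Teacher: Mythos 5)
Your argument is correct and is essentially the paper's own coloring in different notation: the paper writes $|A| = 2^{k+1}-l$ and colors $A$ by the pair $(k,a)$ with $a$ the $l$-th largest element of $A$, and since the $l$-th largest element is the $(|A|-l+1)$-th smallest $= (2|A|-2^{k+1}+1)$-th smallest, this is exactly your $p_j(|A|)$-th smallest element with $j=k+1$, so your dyadic blocks, distinguished elements, and the disjointness/nesting verification coincide with the paper's. The only (harmless) difference is that you always spend the extra color on the $\frac n2$-element subsets, whereas the paper does so only when $n$ is a power of $2$; the stated bound holds either way.
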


\begin{proof}
Select a subset $A \subset \{1, \ldots, n\}$ with $|A| \leq \frac n2$.  We can uniquely express $|A| = 2^{k+1}-l$, where $k,l \in \bz, k \geq 0$, and $1\leq l \leq 2^k$.  Let $a$ denote the $l$-th largest element of $A$.  Assign $A$ the color consisting of the pair $(k,a)$.  See Figure \ref{fig: kneser}.  Observe that this coloring uses $n\cdot \left\lceil \log_2\left(\frac n2\right) \right\rceil$ colors, unless $n$ is a power of 2.  If it is, then we alter the coloring on the $\frac n2$-element subsets by giving them all the same color, distinct from those used on the subsets with fewer elements.  In this case, the coloring uses $n\cdot \left\lceil \log_2\left(\frac n2\right) \right\rceil +1$ colors. We claim in either case that this coloring of $\kg(n)$ is proper.
Suppose that $A$ and $B$ are different subsets that receive the same color $(k,a)$.  We seek to show that $A$ and $B$ are neither disjoint nor is one contained in the other.  Assume without loss of generality that $|A| \leq |B|$.
As $a \in A \cap B$, these subsets are not disjoint.  If $|A|=|B|$, then it is immediate that neither is contained in the other.  If instead $|A| < |B|$, then $l(A) > l(B)$, so $A$ contains more elements less than $a$ than $B$ does, while $2^{k+1}-2l(B) > 2^{k+1}-2l(A)$, so $B$ contains more elements greater than $a$ than $A$ does.  Therefore, neither of $A$ and $B$ is contained in the other in this case either.  It follows that this coloring of $\kg(n)$ is proper and establishes the desired bound.
\end{proof}

\begin{proof}
[Proof of Theorem \ref{thm: kneser chromatic intro}]
Immediate from Theorems \ref{thm: kneser fractional chromatic} and \ref{thm: kneser chromatic}.
\end{proof}


\section{Curves on planar surfaces}
\label{sec: planar surfaces}

In this section, we obtain a precise estimate on the chromatic number of the curve graph of a planar surface, and we determine its fractional chromatic number exactly.
The methods of this section serve as a prototype for those appearing later on.

We assume that $n \ge 5$, so that $\cc(\Sigma_n)$ contains edges.
(When $n=4$, the definition of $\cc(\Sigma_4)$ is usually altered so that edges consist of pairs of curves with minimal intersection number 2; see \S \ref{sec: four-holed}.)
We establish the following more precise version of Theorem \ref{thm: planar chromatic intro}:

\begin{Thm}
\label{thm:planar}
$\chi(\cc(\Sigma_n)) = \Theta(n \log n)$ and $\chi_f(\cc(\Sigma_n)) = \chi_f(\kg(n))-n$. 
\end{Thm}

We split the proof of Theorem \ref{thm:planar} into two easy lemmas.

\begin{Lem}
\label{lem:planar-upper}
There exists a homomorphism $f: \cc(\Sigma_n) \to \kg(n) \smallsetminus \kg(n,1)$.
\end{Lem}

\begin{proof}
A curve $c \in \cc(\Sigma_n)$ induces a partition $f(c)$ of the components of $\del \Sigma_n$ into two non-empty subsets according to which component of $\Sigma_n \smallsetminus c$ they belong.
Both subsets have size at least two, since $c$ is essential.
If $c,d \in \cc(\Sigma_n)$ are adjacent, then $\Sigma_n \smallsetminus c \cup d$ consists of three components, each of which contains a component of $\del \Sigma_n$.
It follows that the partitions $f(c)$ and $f(d)$ are distinct and nested.
Identifying the holes of $\Sigma_n$ with the underlying set of $\kg(n)$, the mapping $f$ defines the required homomorphism.
\end{proof}

\begin{Lem}
\label{lem:planar-lower}
There exists an embedding $c: \cg(n) \smallsetminus \cg(n,1) \hookrightarrow \cc(\Sigma_n)$.
\end{Lem}

\begin{proof}
Embed an $n$-cycle in the 2-sphere $\Sigma$.
Label its vertices $p_1,\dots,p_n$ cyclically and its edges $e_i=(p_i,p_{i+1})$, indices $(\mathrm{mod} \, n)$.
Identify $\Sigma_n$ with the complement in $\Sigma$ of a small neighborhood of $\{p_1,\dots,p_n\}$. 
For a vertex $v = \{i, i+1, \ldots, i+k\} \in \cg(n) \smallsetminus \cg(n,1)$, let $c(v) \in \cc(\Sigma_n)$  be the boundary of a regular neighborhood of $e_i \cup \cdots \cup e_{i+k-1} \subset \Sigma$. 
It is easy to see that the mapping $c$ defines the required embedding.  
\end{proof}

\begin{proof}[Proof of Theorem \ref{thm:planar}]
The result follows from Theorem \ref{thm: kneser chromatic intro}, Lemma \ref{lem:planar-upper}, Lemma \ref{lem:planar-lower}, and the monotonicity of the (fractional) chromatic number under homomorphisms.
Note in addition that $\kg(n,1) \subset \kg(n)$ and $\cg(n,1) \subset \cg(n)$ are cliques of size $n$ adjacent to all other vertices; removing them from their supergraphs lowers the (fractional) chromatic numbers by $n$.
\end{proof}

The proof of Theorem \ref{thm:planar} raises the question whether $\kg(n) \smallsetminus \kg(n,1)$ itself embeds in $\cc(\Sigma_n)$.
The following result indicates that this is not the case:

\begin{Prop}
\label{p: no petersen}
$\cc(\Sigma_5)$ does not contain a subgraph isomorphic to $\kg(5,2)$.
\end{Prop}

\begin{proof}
Suppose by way of contradiction that there were.
Restricting $f$ to the subgraph gives an endomorphism of $\kg(5,2)$. 
Since any endomorphism of $\kg(n,k)$ is an automorphism \cite[Theorem 7.9.1]{Godsilgraphs}, it follows that the subgraph is the image of a section $s$ of $f$.
Given a 2-element subset $\{i,j\}$, let $a_{ij}$ denote the arc, unique up to isotopy, with endpoints on the holes $\del_i$, $\del_j$ that is disjoint from $s(\{i,j\})$.
Collapsing each hole to a point gives a drawing of $K_5$ on the sphere such that arcs with distinct endpoints are disjoint.
If a pair of arcs with a common endpoint meet in their interiors, then exchanging portions of these arcs and performing a small isotopy results in such a drawing of $K_5$ with fewer self-intersections.
Iterating this process ultimately results in a planar drawing of $K_5$, a contradiction.
\end{proof}


\section{Separating curves} 
\label{sec: separating curves}

The goal of this section is to prove Theorem \ref{thm:separating}.
The proof strategy is similar to that of Theorem \ref{thm:planar}.  However, we must replace the holes by a less obvious collection of points.  They are provided by the following result, which draws inspiration from \cite{BuserDistribution}:

\begin{Thm}
\label{thm: hyperbolic}
If $S$ has Euler characteristic $-k < 0$, then there exists a hyperbolic metric on $S$ and a subset $Q \subset S$ of $12k$ points with the following two properties: \begin{inparaenum} \item $S$ has totally geodesic boundary, and \item if $F \subset S$ is a subsurface with totally geodesic boundary, then $Q \cap \del F = \emptyset$ and
$$
-\chi(F) = \frac 1{2 \pi} \mathrm{Area}(F) = \frac 1{12} |Q\cap F|.
$$
\end{inparaenum}
\end{Thm}

Thus, the discrete uniform measure concentrated on the point set $Q$ is proportional to the standard area measure when restricted to subsurfaces with totally geodesic boundary.

We first derive Theorem \ref{thm:separating} from Theorem \ref{thm: hyperbolic}.

\begin{proof}[Proof of Theorem \ref{thm:separating}]
First, we establish the upper bound. 
Apply Theorem \ref{thm: hyperbolic}.
A curve $c \in \cs(S)$ has a unique geodesic representative, which cuts $S$ into a pair of subsurfaces with totally geodesic boundary.
We obtain a partition $f(c)$ of $Q$ into two parts according to the subsurfaces these points lie in.
Identifying $Q$ with the underlying set of the Kneser graph $\kg(12k)$, we obtain a map $f: \cs(S) \to \kg(12 k)$.

We claim that $f$ is a homomorphism.
If $c, d \in \cs(S)$ are adjacent, then they can be realized by disjoint, simple closed geodesics in $S$. 
The complement $S \smallsetminus c \cup d$ consists of three components with totally geodesic boundary.
Since the curves are essential and not parallel, each component has negative Euler characteristic.
By Theorem \ref{thm: hyperbolic}, each subsurface contains points of $Q$, and it follows that $f(c)$ and $f(d)$ are nested.
Therefore, $f$ is a homomorphism, as claimed.
Theorem \ref{thm: kneser chromatic} and the monotonicity of the chromatic number under homomorphisms now lead to the stated upper bound.

For the lower bound, embed a planar surface $\Sigma = \Sigma_{g+b}$ into $S$ so that each component of $\del \Sigma$ either bounds a subsurface $S^1_1 \subset S$ or is a component of $\del S$. 
Every essential curve in $\Sigma$ is essential and separating in $S$, and distinct curves in $\Sigma$ are distinct in $S$. 
The embedding $\Sigma \into S$ therefore induces an embedding $\cc(\Sigma) \into \cs(S)$, and Theorem \ref{thm:planar} gives the desired lower bound.  
\end{proof}

We now develop the proof of Theorem \ref{thm: hyperbolic} through a sequence of lemmas in hyperbolic geometry.
Roughly speaking, we apply a geometric limiting argument in which the curves in a pants decomposition get pinched.
For background on hyperbolic geometry, see \cite[Ch.1\&3]{BuserGeometry} and \cite[\S\S10.5-6]{Farbprimer}.

\begin{figure}
\includegraphics[width=5in]{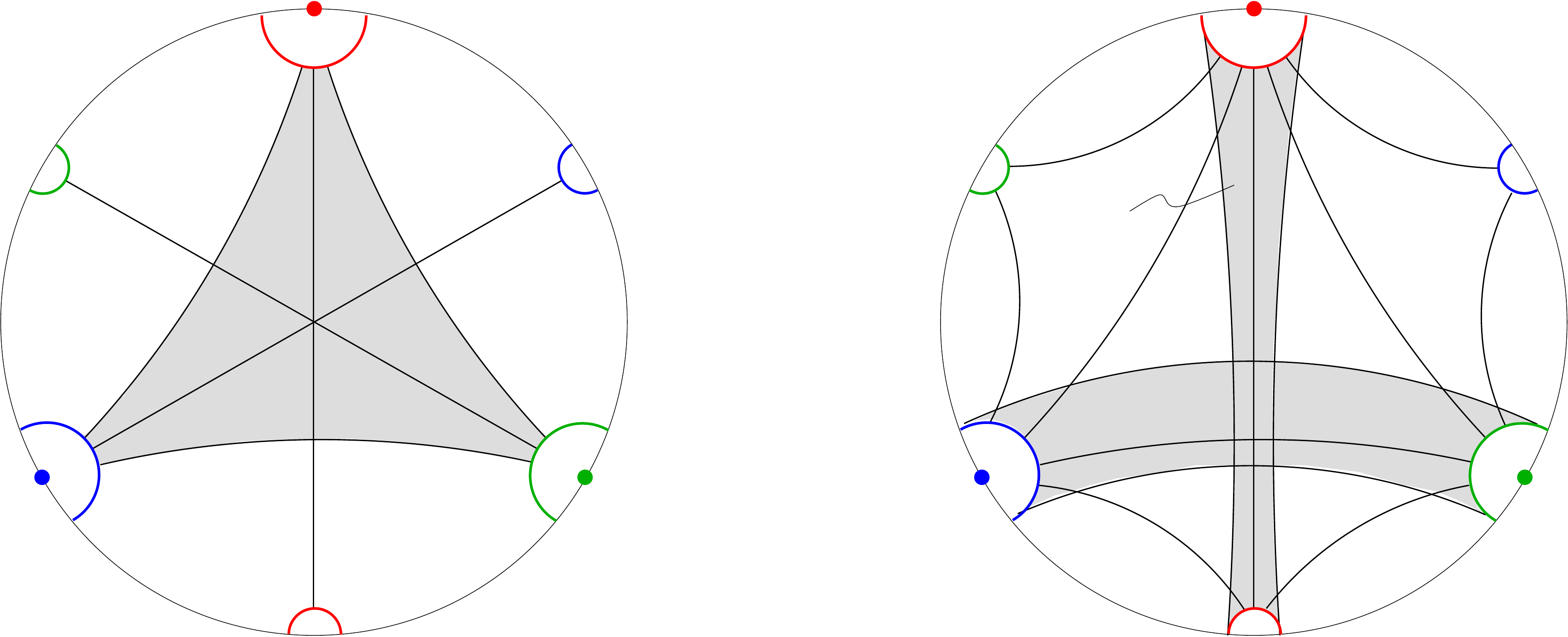}
\put(-390,150){(a)}
\put(-285,90){$H$}
\put(-300,150){\textcolor{red}{$p_1$}}
\put(-280,150){\textcolor{red}{$\del_1$}}
\put(-300,-15){\textcolor{red}{$\del_1'$}}
\put(-360,27){\textcolor{blue}{$p_2$}}
\put(-370,45){\textcolor{blue}{$\del_2$}}
\put(-220,100){\textcolor{blue}{$\del_2'$}}
\put(-220,35){\textcolor{green3}{$p_3$}}
\put(-230,20){\textcolor{green3}{$\del_3$}}
\put(-360,120){\textcolor{green3}{$\del_3'$}}
\put(-305,15){$g_{11}$}
\put(-350,60){$g_{12}$}
\put(-240,90){$g_{22}$}
\put(-270,35){$g_{23}$}
\put(-340,105){$g_{33}$}
\put(-275,120){$g_{13}$}
\put(-170,150){(b)}
\put(-115,90){$Q_{11}$}
\put(-60,50){$Q_{23}$}
\caption{(a) A right-angled hexagon $H$ and distinguished geodesics in $\bd^2$. (b) Two ideal quadrilaterals and three reflections of $H$.}
\label{fig: geometry}
\end{figure}

We work in the Poincar\'e disk model $\bd$ of the hyperbolic plane.
Fix three equally spaced ideal points $p_1,p_2,p_3 \in \del \overline \bd$.
Consider a right-angled geodesic hexagon $H \subset \bd$ that is invariant under the symmetries of $\bd$ permuting the points $p_i$.
Extend the side of $H$ closest to $p_i$ to a complete geodesic $\del_i$, and label its side with endpoints on $\del_i$ and $\del_j$ by $g_{ij}$.
Let $\del_i'$ denote the reflection of $\del_i$ across $g_{jk}$, where $\{i,j,k\} = \{1,2,3\}$, and let $g_{ii}$ denote the geodesic arc through the origin and perpendicular to $\del_i$ with endpoints on $\del_i$ and $\del_i'$.
See Figure \ref{fig: geometry}(a).

The reflections across the sides $g_{ij}$ of $H$ generate a subgroup $\Gamma < \mathrm{Isom}(\bd)$, and the images of $H$ under the action by $\Gamma$ tesselate a simply-connected, convex region $\tilde P \subset \bd$.
For $1 \le i \le j \le 3$, let $Q_{ij} \subset \tilde P$ denote the interior of the ideal quadrilateral determined by the boundary components of $\tilde P$ containing the endpoints of $g_{ij}$.  See Figure \ref{fig: geometry}(b).

\begin{Lem}
\label{lem: tesselate}
The images of $H \cap Q_{ij}$ under the reflection group $\Gamma$ cover $Q_{ij}$.
\end{Lem}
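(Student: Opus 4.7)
The plan is to leverage the tesselation $\tilde P = \bigcup_{\gamma \in \Gamma} \gamma(H)$, whose dual is a trivalent tree since $\Gamma$ is isomorphic to $\bz/2 \ast \bz/2 \ast \bz/2$ (the reflection axes $g_{12}, g_{13}, g_{23}$ being pairwise disjoint and non-asymptotic). For any $p \in Q_{ij} \subseteq \tilde P$ lying in the interior of a tile, there is a unique $\gamma \in \Gamma$ such that $\gamma^{-1}(p) \in H$; the content of the lemma is to verify additionally that $\gamma^{-1}(p) \in Q_{ij}$, so that $p = \gamma(\gamma^{-1}(p)) \in \gamma(H \cap Q_{ij})$.

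Two of the four sides of $Q_{ij}$ are components of $\partial \tilde P$ -- namely $\del_i, \del_j$ for $i \neq j$, or $\del_i, \del_i'$ for $i = j$ -- and hence are not crossed by any tile. The essential geometric ingredient is that the other two sides of $Q_{ij}$ are similarly ``invisible'' to the tesselation: they are limit curves of chains of reflected $g$-sides along specific rays in the dual tree, but no finite tile crosses them.

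Granting this, I would proceed by induction on the word length $|\gamma|$ in the generators $r_{12}, r_{13}, r_{23}$. The base case $|\gamma| = 0$ is immediate. For the inductive step, write $\gamma = r \gamma'$ with $|\gamma'| < |\gamma|$; the key observation is that the $g$-side across which the final reflection $r^{-1}$ acts lies in the interior of $Q_{ij}$, because it is shared between two tiles both meeting $Q_{ij}$. Reflecting $p \in Q_{ij}$ across this $g$-side keeps us inside $Q_{ij}$, and the inductive hypothesis applies to the shorter word.

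The main obstacle will be verifying the geometric ``invisibility'' of the two non-$\del$ sides of $Q_{ij}$: specifically, that the $\Gamma$-orbit of the $g$-sides accumulates on them without ever crossing. A clean approach would be to identify these non-$\del$ sides as limits of axes of hyperbolic elements of $\Gamma$ along the relevant ends of the dual tree, and to use the standard convex-hull / Nielsen-region argument to conclude that the orbit of tile sides cannot jump across them.
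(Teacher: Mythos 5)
Your reformulation of the lemma as the tile-wise statement $Q_{ij}\cap\gamma(H)\subseteq\gamma(Q_{ij}\cap H)$, proved by induction along the dual tree, is a reasonable skeleton, and in spirit it is not far from the paper's argument (which composes reflections $r_n\cdots r_1$ along a ray of tiles). But the geometric ingredient you rest it on is false, and the inductive step is exactly the unproved crux. The interior sides $\sigma,\sigma'$ of $Q_{ij}$ are \emph{not} invisible to the tesselation: infinitely many walls cross each of them. The paper's proof is in fact organized around the sequence $s_1,s_2,\dots$ of tesselation edges one crosses while moving along $\sigma$ into the corner of $Q_{ij}$ at an ideal vertex, and one can see a single straddling tile directly: for $i\neq j$, the image of $H$ under the reflection across $g_{ij}$ has its side lying on the image of $\del_k$ (third index $k$) entirely on the far side of $\sigma$, while its side $g_{ij}$ lies inside $Q_{ij}$. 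Nor is $\sigma$ a limit of reflected $g$-sides: the walls entering a corner of $Q_{ij}$ shrink to the ideal vertex, not to $\sigma$, which runs transversally across infinitely many tiles. So the proposed Nielsen-region/limit-of-axes argument is aimed at a statement that does not hold.

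Independently of that, the step ``reflecting $p\in Q_{ij}$ across this wall keeps us inside $Q_{ij}$'' does not follow from the wall meeting $Q_{ij}$: reflection across a geodesic chord of a convex region generally sends the far piece outside the region, and your justification (``shared between two tiles both meeting $Q_{ij}$'') also presumes the predecessor tile meets $Q_{ij}$, which is part of what is being proved. What makes the step true for the walls that actually occur is a specific convexity argument: such a wall is perpendicular to one of the boundary geodesics of $\tilde P$ bounding $Q_{ij}$, the reflection preserves that geodesic and carries $\sigma$ to a geodesic through a point of $\overline{Q}_{ij}$ limiting to an ideal vertex of $Q_{ij}$, so the reflected piece is trapped in a geodesic triangle with vertices in $\overline{Q}_{ij}$. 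That is precisely the paper's nesting claim $r_n(Q_n)\subset Q_{n-1}$, together with the symmetry step (the reflections across $g_{ij}$ and $g_{kl}$, plus $g_{ii}\Gamma g_{ii}=\Gamma$) that transports the one corner analyzed to the remaining ones. Your plan needs this input supplied wall-by-wall; as written it asserts it, so there is a genuine gap.
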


\begin{proof}
Let $\sigma$ and $\sigma'$ denote the sides of $\overline{Q}_{ij}$ interior to $\tilde P$.
Moving along $\sigma$ from the interior of $H$ to the ideal point of intersection between $\sigma$ and $\del_i$, we encounter a sequence of edges $s_1,s_2,\dots$ in the tesselation of $\tilde P$.
The geodesic triangle $\Delta \subset Q_{ij}$ bounded by $\del_i$, $\sigma$, and $s_1$ is tesselated by quadrilaterals $Q_n$, $n \ge 1$, where $Q_n$ is bounded by $s_n,\sigma,s_{n+1}$, and $\del_i$.
Let $r_n$ denote the reflection in the side $s_n$.
For $n\geq 2$, observe that $r_n(Q_n)$ is the quadrilateral bounded by $s_n, r_n(\sigma), s_{n-1}$, and $\del_i$.  
The geodesic $r_n(\sigma)$ passes through the point of intersection between $\sigma$ and $s_n$ and limits to the ideal point of intersection between $\del_i$ and $\sigma'$, and the portion of $r_n(\sigma)$ between these two points is contained in $Q_{ij}$ by convexity.
It follows that $r_n(Q_n) \subset Q_{n-1}$ for all $n \ge 2$.
Furthermore, $r_1(Q_1)$ is contained in $Q_{ij} \cap H$ by similar reasoning.
It follows that the images of $Q_{ij} \cap H$ under the elements $r_n \cdots r_1 \in \Gamma$, $n \ge 1$, cover $\Delta$.
Let $k$ and $l$ denote the unique values so that $\{i,j\}$ and $\{k,l\}$ partition the set $\{1,2,3\}$.
The region $Q_{ij}$ is now the union of $Q_{ij} \cap H$ and the images of $\Delta$ under the four elements generated by the reflections across the orthogonal sides $g_{ij}$ and $g_{kl}$.
The statement of the Lemma now follows with the additional observation that $g_{ii}\Gamma g_{ii} = \Gamma$ for $i \in \{1,2,3\}$.
\end{proof}

Let $\ell > 0$ denote the common length of the sides of $H$ contained in the $\del_i$.
Observe that we obtain a one-parameter family of hexagons $H$ as above by varying the value $\ell \in \br^+$.

\begin{Lem}
\label{lem: limit}
As $\ell \to 0$, the complement of the $Q_{ij}$ in $H$ consists of 6 pairwise disjoint geodesic triangles whose angles limit to $(\pi/2,\pi/3,0)$.
\end{Lem}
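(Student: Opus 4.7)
The plan is to take the limit $\ell \to 0$, identify the limiting configuration explicitly, and then transfer the conclusion back to small $\ell > 0$ by continuity. As a preparation, I would record that the three-fold-symmetric right-angled hexagon identity simplifies to $\cosh(m)(\cosh(\ell)-1)=\cosh(\ell)$, where $m$ is the common length of each connector side $g_{ij}$ ($i\neq j$). In particular $m \to \infty$ as $\ell \to 0$, so each $\del_i$-side of $H$ collapses to a point which by symmetry tends to $p_i$, and $H$ Hausdorff-converges to the regular ideal triangle $\Delta$ with vertices $p_1, p_2, p_3$.

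Next I would analyze the limits of the six ideal quadrilaterals. For $i\neq j$, the four ideal vertices of $Q_{ij}$ are the endpoints of $\del_i$ and $\del_j$, which pairwise coalesce at $p_i$ and $p_j$, so $Q_{ij}$ degenerates onto the edge $\overline{p_i p_j}$ of $\Delta$. The vertices of $Q_{ii}$ are the endpoints of $\del_i$ and of its reflection $\del_i'$, and $\del_i'$ collapses to the reflection $p_i^*$ of $p_i$ across the opposite edge $\overline{p_j p_k}$, so $Q_{ii}$ degenerates to the complete geodesic through $p_i$, $O$, and $p_i^*$---which inside $\Delta$ is the median from $p_i$ to the midpoint $M_{jk}$. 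These three limiting medians are concurrent at $O$ and by three-fold symmetry meet there at pairwise angles $\pi/3$, subdividing $\Delta$ into six open triangles of shape $(p_i, O, M_{ij})$. Their angles are $0$ at the ideal vertex $p_i$ (two geodesics sharing an ideal endpoint meet at angle zero), $\pi/3$ at $O$, and $\pi/2$ at $M_{ij}$ (since the median from the opposite vertex drops perpendicularly to the edge at its midpoint, by symmetry).

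Finally, I would transfer back via continuity. The geodesics $\del_i, \del_i', g_{ii}$, together with the two transversal boundary geodesics of each $Q_{ij}$, all vary continuously with $\ell$, and so do their pairwise intersections inside $H$. Thus for all sufficiently small $\ell > 0$, the complement $H \setminus \bigcup_{ij} Q_{ij}$ is a disjoint union of six geodesic triangles---one close to each of the six limit triangles---whose angles converge to $(\pi/2, \pi/3, 0)$. The main obstacle is precisely this continuity and identification step: one must verify that the complement really has this predicted combinatorial structure for small $\ell$, i.e., that the six $Q_{ij}$'s have pairwise disjoint interiors in $H$ and that their transversal boundary arcs meet inside $H$ in the pattern predicted by the limit, with no unexpected crossings. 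The six-fold dihedral symmetry of $H$ reduces this to a finite configuration check in a single fundamental sector.
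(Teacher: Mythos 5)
Your proposal follows essentially the same route as the paper's proof: let $\ell \to 0$, observe that $H$ Hausdorff-converges to the ideal triangle $\Delta$ with vertices $p_1,p_2,p_3$ while the $Q_{ij}$ collapse onto complete geodesics (the edges of $\Delta$ and the three perpendiculars from the $p_i$ through the center), whose complement in $\Delta$ is six $(\pi/2,\pi/3,0)$-triangles, and then transfer back by continuity. The extra ingredients you supply (the hexagon identity $\cosh(m)(\cosh\ell-1)=\cosh\ell$ and the explicit identification of the limiting ``medians'') are correct, and the persistence-of-configuration step you flag as the main remaining check is exactly the step the paper treats as immediate from the Hausdorff convergence.
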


\begin{proof}
The endpoints of $\del_i$ are continuous functions of $\ell$ and they converge to $p_i$ as $\ell \to 0$.
Therefore, as $\ell \to 0$, the $Q_{ij}$ are converging to complete geodesics $G_{ij}$ and $H$ is converging to the ideal triangle $\Delta$ with vertices $p_1, p_2, p_3$ (the convergence is in the Hausdorff metric with respect to the Euclidean metric on $\bd$).
Observe that the complement of the $G_{ij}$ in $\Delta$ is the union of six  $(\pi/2,\pi/3,0)$-triangles.
The lemma now follows immediately.
\end{proof}

The subgroup $\Gamma_0 = \Gamma \cap \mathrm{Isom}^+(\bd)$ has index two in $\Gamma$.
The quotient $P = \tilde P / \Gamma_0$ is homeomorphic to a pair of pants $S^3_0$, and the quotient map $p: \tilde P \to P$ is its universal covering.
The map $p$ endows $P$ with a hyperbolic structure in which each boundary component $p(\del_i)$ is totally geodesic and has length $2\ell$.

\begin{Lem}
\label{lem: geodesic cover}
Every properly embedded, simple geodesic arc in $P$ is contained in the image under $p$ of the $Q_{ij}$.
\end{Lem}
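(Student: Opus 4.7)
The plan is to lift $\alpha$ to a geodesic segment in the universal cover, observe that it lies in the convex ideal quadrilateral spanned by the two boundary components of $\tilde P$ containing its endpoints, and identify this quadrilateral as a $\Gamma$-translate of one of the $Q_{ij}$.

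Let $\tilde\alpha$ be a lift of $\alpha$ to $\tilde P$, a geodesic segment from $\tilde x\in C$ to $\tilde y\in C'$ for distinct boundary components $C,C'$ of $\tilde P$. The four ideal endpoints of $C\cup C'$ are the vertices of an ideal quadrilateral $Q$ with $C$ and $C'$ as opposite sides and remaining sides $L_1,L_2$ the ideal geodesics closing up the quadrilateral. By convexity of ideal polygons in $\bd$, $\tilde\alpha\subset Q$. To see $Q\subset \tilde P$, suppose for contradiction that another boundary component $C''$ of $\tilde P$ meets the interior of $Q$. Since boundary components are disjoint complete geodesics, $C''$ cannot cross $C$ or $C'$; being complete, $C''$ therefore crosses $L_1$ and $L_2$ each once, and splits $Q$ into two regions, one containing $C$ and the other $C'$. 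But $\tilde P$ is convex with $C''\subset\partial\tilde P$, so $\tilde P$ lies in a single closed half-plane bounded by $C''$; this forces $C$ and $C'$ onto the same side of $C''$, a contradiction. In particular the argument does not use simplicity of $\alpha$, only that $\tilde\alpha\subset\tilde P$ is a geodesic segment with endpoints on two components of $\partial\tilde P$.

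I then claim $Q=\gamma Q_{ij}$ for some $\gamma\in\Gamma$ and some $1\le i\le j\le 3$. Since $\Gamma$ permutes $\partial\tilde P$ and preserves $\tilde P$, we may apply an element of $\Gamma$ to assume $C=\del_i$. The stabilizer $\Stab(\del_i)\le\Gamma$ is the infinite dihedral group generated by the reflections across the two seams of $H$ meeting $\del_i$. I would verify that every boundary component $C'$ that together with $\del_i$ bounds an ideal quadrilateral inside $\tilde P$ is $\Stab(\del_i)$-equivalent to one of the three model adjacents $\del_j,\del_k$, or $\del_i'$ (for $\{i,j,k\}=\{1,2,3\}$), so that $Q$ is $\Gamma$-equivalent to the corresponding $Q_{ij},Q_{ik}$, or $Q_{ii}$. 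Passing to the quotient, the orientation-reversing involution of $P$ induced by any reflection in $\Gamma\setminus\Gamma_0$ preserves each $p(Q_{ij})$ setwise (this symmetry fixes the orthogeodesic $p(g_{ij})$ and the ideal endpoint configuration defining $Q_{ij}$), so $p(Q)=p(Q_{ij})$ independently of whether the transporting element lies in $\Gamma_0$. Hence $\alpha=p(\tilde\alpha)\subset p(Q_{ij})\subset\bigcup p(Q_{ij})$, which handles every properly embedded simple geodesic arc, not merely the six orthogeodesic representatives.

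The main obstacle is verifying that the components of $\partial\tilde P$ adjacent to $\del_i$ are exhausted, up to $\Stab(\del_i)$-translation, by the three components $\del_j,\del_k,\del_i'$. This would proceed by combinatorial analysis of the hexagonal tessellation of $\tilde P$ using the free product structure $\Gamma\cong\bz/2*\bz/2*\bz/2$ (whose dual graph to the tessellation is trivalent), tracking how reflections in the two seams through $\del_i$ shuffle adjacent components, and ruling out further $\Stab(\del_i)$-orbits by showing that any other candidate pair would enclose an intermediate boundary component and hence fail to bound an ideal quadrilateral inside $\tilde P$.
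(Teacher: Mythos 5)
There is a genuine gap, and it is located exactly where you flagged ``the main obstacle.'' The step $Q \subset \tilde P$ is correct but vacuous: $\tilde P$ is the convex hull of the limit set of $\Gamma_0$, so the ideal quadrilateral spanned by \emph{any} two boundary components of $\tilde P$ (the convex hull of four limit points) automatically lies in $\tilde P$, and by the same convexity no boundary component of $\tilde P$ ever separates two others. Consequently the criterion you propose to use --- ``cobounds an ideal quadrilateral inside $\tilde P$'' --- is satisfied by every pair of boundary components, while there are infinitely many $\Gamma$-orbits of such pairs and only six model ones. So the deferred classification is false as stated, and the proposed mechanism for ruling out extra $\mathrm{Stab}(\del_i)$-orbits (``any other candidate pair would enclose an intermediate boundary component'') can never fire. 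The claim that your argument does not use simplicity is the symptom: the lemma is \emph{false} for non-simple geodesic arcs, since $\bigcup p(Q_{ij})$ misses the twelve triangles of Proposition \ref{prop: simple geodesics} (and has small total area as $\ell \to 0$), whereas every interior point of $P$ lies on some boundary-to-boundary geodesic segment. Any correct proof must therefore invoke simplicity somewhere, and your outline never does.

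The paper's route supplies exactly the missing input: an essential properly embedded simple arc in a pair of pants with prescribed (unordered) endpoint components is isotopic to the orthogeodesic $p(g_{ij})$, and lifting that isotopy shows the lift $\tilde a_{ij}$ has its endpoints on the \emph{same} two boundary components of $\tilde P$ as $g_{ij}$ --- i.e.\ precisely the pair spanning $Q_{ij}$ --- after which convexity of $Q_{ij}$ finishes. If you wanted to complete your universal-cover approach, you would need to use simplicity directly (e.g.\ that the $\Gamma_0$-translates of $\tilde\alpha$ are pairwise disjoint) to constrain which lift of the second boundary component can receive the far endpoint; carried out, this amounts to re-deriving the isotopy classification of simple arcs on the pair of pants, so the paper's argument is both shorter and logically prior.
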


\begin{proof}
Let $a_{ij} \subset P$ denote a properly embedded, simple geodesic arc that has one endpoint on $p(\del_i)$ and the other on $p(\del_j)$.
The projection $p(g_{ij})$ is a properly embedded, simple geodesic arc isotopic to $a_{ij}$.
The isotopy from $p(g_{ij})$ to $a_{ij}$ lifts to an isotopy from $g_{ij}$ to a lift $\tilde a_{ij} \subset \tilde P$.
In particular, $\tilde a_{ij}$ is a geodesic arc with endpoints on the same boundary components of $\tilde P$ as $g_{ij}$.
It follows that $\tilde a_{ij} \subset Q_{ij}$.
Therefore, $a_{ij} = p(\tilde a_{ij}) \subset p(Q_{ij})$, as required.
\end{proof}

\begin{Prop}
\label{prop: simple geodesics}
As $\ell \to 0$, the complement of the properly embedded, simple geodesic arcs in $P$ contains 12 pairwise disjoint geodesic triangles whose angles limit to $(\pi/2,\pi/3,0)$.
\end{Prop}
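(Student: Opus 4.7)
The strategy is to descend the conclusion of Lemma \ref{lem: limit} from the hexagon $H$ to the pair of pants $P$ via the covering map $p$. By construction, $H$ is a fundamental domain for the reflection group $\Gamma$ acting on $\tilde P$. Since $\Gamma_0$ has index two in $\Gamma$, a fundamental domain for $\Gamma_0$ acting on $\tilde P$ may be taken to be $H \cup rH$, where $r$ denotes the reflection across one of the interior sides, say $g_{12}$. Under $p$ this domain descends bijectively to all of $P$, so $P$ contains two distinguished copies of $H$, meeting along the image of their shared side.

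For each copy of $H$ sitting inside $P$, Lemma \ref{lem: limit} produces, for all sufficiently small $\ell$, six pairwise disjoint geodesic triangles in the complement of $\bigcup_{ij} Q_{ij}$ whose angles tend to $(\pi/2, \pi/3, 0)$. Taking the union over the two copies yields $12$ geodesic triangles in $P$. Triangles coming from the same copy of $H$ are pairwise disjoint by Lemma \ref{lem: limit}; triangles from different copies lie in the interiors of distinct $\Gamma$-translates of $H$, whose interiors are disjoint in $\tilde P$ and hence remain so after projection to $P$.

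To see that these $12$ triangles lie in the complement of the properly embedded simple geodesic arcs of $P$, I would invoke Lemma \ref{lem: geodesic cover}, which states that every such arc is contained in $\bigcup_{ij} p(Q_{ij})$. Since the triangles were selected from the complement of the $Q_{ij}$ in $H$, their interiors are disjoint from $\bigcup_{ij} p(Q_{ij})$, and therefore from every properly embedded simple geodesic arc in $P$.

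The main conceptual step, as opposed to an outright obstacle, is the observation that the pair of pants is ``double-covered'' by the hexagon $H$ in the reflective sense described above; once this is in hand, the rest is bookkeeping with fundamental domains. The one subtlety worth checking carefully is that no pair of the $12$ triangles is identified under the quotient $\tilde P \to P$, but because the triangles lie in the interiors of the two chosen $\Gamma$-translates of $H$ while all identifications happen along the boundary geodesics $g_{ij}$, no such identification can occur.
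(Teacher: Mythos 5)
Your overall architecture matches the paper's: view $P$ as tesselated by two copies of $H$ (the image $p(H)$ and its reflection), get six triangles from each via Lemma \ref{lem: limit}, and conclude disjointness from the simple geodesic arcs via Lemma \ref{lem: geodesic cover}. But there is a genuine gap at the crucial step where you write that, since the triangles lie in $H \smallsetminus \bigcup_{ij} Q_{ij}$, ``their interiors are disjoint from $\bigcup_{ij} p(Q_{ij})$.'' That inference does not follow: disjointness upstairs inside one fundamental domain does not give disjointness of images under $p$, because each $Q_{ij}$ is an ideal quadrilateral extending far beyond $H \cup rH$, and a priori some nontrivial $\Gamma_0$-translate of $Q_{ij}$ could intersect one of your triangles, in which case $p(Q_{ij})$ would meet the projected triangle even though $Q_{ij}\cap H$ does not. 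This is exactly the point of Lemma \ref{lem: tesselate}, which you never invoke: since the $\Gamma$-translates of $Q_{ij}\cap H$ cover $Q_{ij}$, one gets $p(Q_{ij}) \subset p(Q_{ij}\cap H) \cup r\bigl(p(Q_{ij}\cap H)\bigr)$, and only then is the complement of the $Q_{ij}\cap H$ in $H$ (together with its $r$-reflection) genuinely disjoint from the images of the quadrilaterals, hence from the arcs by Lemma \ref{lem: geodesic cover}. The paper's proof cites Lemma \ref{lem: tesselate} at precisely this step; without it, your disjointness claim is unsupported.

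A smaller imprecision in the same vein: you apply Lemma \ref{lem: limit} to ``each copy of $H$,'' but the union $\bigcup_{ij} Q_{ij}$ is not invariant under the reflection $r$ (for instance, the reflection across $g_{12}$ sends $Q_{13}$ to the quadrilateral spanned by $\del_1$ and $\del_3'$, which is not one of the six $Q_{ij}$), so the second copy $rH$ does not literally contain six triangles in the complement of $\bigcup_{ij} Q_{ij}$. The clean fix, as in the paper, is to take the twelve triangles in $P$ to be the $p$-images of the six triangles of Lemma \ref{lem: limit} together with their images under the descended involution $r: P \to P$, and to deduce that the reflected six also avoid the arcs from the $r$-invariance of the arc set (or again from the tesselation statement). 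With Lemma \ref{lem: tesselate} inserted and this bookkeeping adjusted, your argument becomes the paper's proof.
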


\begin{proof}
Any reflection in $\Gamma \smallsetminus \Gamma_0$ descends to the same order-two, orientation-reversing isometry $r : P \to P$.
The covering map $p$ is a homeomorphism from $H$ onto its image, and $p(H)$ and $r(p(H))$ tesselate $P$.
The image of the $Q_{ij}$ under $p$ is the image of the $Q_{ij} \cap H$ under $p$ and its reflection under $r$, by Lemma \ref{lem: tesselate}.
The 12 triangles are then the images under $p$, and their reflections under $r$, of the 6 triangles in Lemma \ref{lem: limit}.
They lie in the complement of the properly embedded, simple geodesic arcs in $P$ by Lemma \ref{lem: geodesic cover}.
\end{proof}

We are now ready to prove Theorem \ref{thm: hyperbolic}.

\begin{proof}[Proof of Theorem \ref{thm: hyperbolic}]
Choose a pants decomposition $S = P_1 \cup \cdots \cup P_k$.
Fix $\ell > 0$ and place a hyperbolic structure on $S$ so that each boundary component of each $P_i$ has length $\ell$.
Every simple closed geodesic in $S$ intersects each $P_i$ in a union of disjoint simple proper arcs or in a boundary component.
By Proposition \ref{prop: simple geodesics}, as $\ell \to 0$, the set of simple closed geodesics in $S$ is disjoint from a set of $12k$ pairwise disjoint geodesic triangles $\Delta_1,\dots,\Delta_{12k}$, 12 in each $P_i$, with the area of each $\Delta_i$ limiting to $\pi/6$, the area of a $(\pi/2,\pi/3,0)$-triangle.
It follows in turn that the complement of the $\Delta_i$ has area limiting to 0 as $\mathrm{Area}(S) = 2\pi k$.
Thus, for $\ell$ chosen suitably small, the area of a subsurface $F \subset S$ with totally geodesic boundary must equal to within 0.5, say, of $\pi/6$ times the number of the $\Delta_i$ that it contains.
On the other hand, $\mathrm{Area}(F) = -2 \pi \chi(F)$ by the Gauss-Bonnet formula, which is an integral multiple of $\pi/6$.
It follows that $\mathrm{Area}(F)$ equals to {\em exactly} $\pi/6$ times the number of the $\Delta_i$ that it contains.
To finish the proof, let $Q$ consist of a single point from each $\Delta_i$. 
\end{proof}


\section{The graph of homologous curves}
\label{sec: homologous}

The goal of this section is to study the chromatic number of the subgraph of $\cc(S)$ induced on curves that can be oriented to represent a fixed non-zero homology class.
The main theme is that this subgraph has a unique minimal coloring, which can be expressed in terms of genera of immersed surfaces.
\S \ref{subsec: closed cc_v} treats the case of coloring a closed surface.
\S \ref{subsec: domains} recasts this coloring in terms of domains, which is critical for the  material of \S \ref{sec: permuting colors}.
\S \ref{subsec: upper bound} applies the coloring to establish the upper bound on $\chi(\cc(S_g))$ stated in Theorem \ref{thm: s_g intro}.
\S \ref{subsec: uniqueness} establishes the uniqueness of the minimal coloring in the case of a closed surface.
Finally, \S \ref{subsec: with boundary} treats the case of a surface with boundary.


\subsection{The chromatic number of $\cc_v(S_g)$}
\label{subsec: closed cc_v}
Let $v \in H_1(S_g;\bz)$ denote a non-zero primitive element, and let $\cc_v(S)$ denote the subgraph of $\cc(S)$ induced on the curves that can be oriented to represent $v$.
The main result of this subsection is:

\begin{Thm}
\label{thm: cc_v(S_g)}
$\chi(\cc_v(S_g)) = \omega(\cc_v(S_g)) = g-1$.
\end{Thm}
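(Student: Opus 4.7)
The plan is to prove $\chi(\cc_v(S_g)) = g - 1$ by matching a clique of size $g-1$ with a proper $(g-1)$-coloring.

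\emph{Lower bound.} I would exhibit a clique of size $g - 1$ in $\cc_v(S_g)$. Fix a non-separating simple closed curve $c$ with $[c] = v$, and cut $S_g$ along $c$ to obtain $S' \cong S_{g-1}^2$ with boundary components $\del_1$ and $\del_2$. Inside $S'$, choose a nested chain of separating curves $s_1 \subset s_2 \subset \cdots \subset s_{g-2}$, where $s_k$ bounds a genus-$k$ subsurface of $S'$ containing $\del_1$. Each $s_k$ is homologous to $\del_1$ in $S'$, hence to $c$ in $S_g$, so $\{c, s_1, \ldots, s_{g-2}\}$ is a collection of $g-1$ pairwise disjoint, pairwise non-isotopic curves in $\cc_v(S_g)$, giving the required clique.

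\emph{Upper bound.} I would construct a coloring $\phi\co V(\cc_v(S_g)) \to \bz/(g-1)\bz$. Orient a reference $c \in \cc_v(S_g)$ so that $[c] = v$. For any $c' \in \cc_v(S_g)$ oriented similarly, the cycle $c - c'$ is null-homologous, so it bounds a 2-chain $F \subset S_g$, uniquely determined modulo $[S_g] \in H_2(S_g,\bz) \cong \bz$. When $c \cap c' = \emptyset$, $F$ is realized by a unique embedded oriented subsurface with $\del F = c - c'$; since replacing $F$ by $F + [S_g]$ shifts $\chi(F) = -2\gamma(F)$ by $\chi(S_g) = -2(g-1)$, the value $\gamma(F) \bmod (g-1)$ is well-defined, and I would set $\phi(c') := \gamma(F) \bmod (g-1)$. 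For the case $c \cap c' \ne \emptyset$, I would define $F$ via the infinite cyclic cover $\wt S$: each $c' \in \cc_v(S_g)$ lifts to a $\bz$-orbit of simple closed separating curves in $\wt S$, and for a sufficiently translated lift $t^m \wt{c'}$, the curves $\wt c$ and $t^m \wt{c'}$ become disjoint and cobound a compact subsurface $\wt F_m \subset \wt S$. Since consecutive lifts of $c'$ also cobound a fundamental domain $\cong S_{g-1}^2$ of genus $g-1$, the quantity $\gamma(\wt F_m) \bmod (g-1)$ is independent of $m$ and defines $\phi(c')$ unambiguously.

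\emph{Properness and main obstacle.} For adjacent $c', c'' \in \cc_v(S_g)$ with chosen 2-chains $F', F''$, the difference $F'' - F'$ has $\del(F'' - F') = c' - c''$ and agrees, modulo $H_2(S_g,\bz)$, with the oriented cobounding subsurface $G$ of the disjoint homologous pair $c', c''$. By the same analysis used in the lower bound, any such cobounding subsurface has $\gamma(G) \in \{1, \ldots, g-2\}$, so $\gamma(F'') - \gamma(F') \equiv \gamma(G) \not\equiv 0 \pmod{g-1}$, and thus $\phi(c') \ne \phi(c'')$. The main technical obstacle is the intersecting case, where $c \cup c'$ is not a disjoint union of circles and no immediate embedded subsurface representative exists; the infinite cyclic cover provides the cleanest resolution, since once a separating lift of $c'$ is translated to be disjoint from $\wt c$, the well-definedness follows from the fundamental-domain structure of $\wt S$ recorded in the excerpt.
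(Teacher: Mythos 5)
Your proposal follows essentially the same route as the paper: the lower bound via the explicit $(g-1)$-clique of nested curves homologous to $v$, and the upper bound via the infinite cyclic cover dual to $v$, coloring a curve by the genus $\bmod\ (g-1)$ of the subsurface it cobounds with a fixed lift $\wt c$, with properness coming from the fact that disjoint, non-isotopic homologous curves cobound an embedded subsurface of genus strictly between $0$ and $g-1$ (your auxiliary description via $2$-chains parallels the paper's Euler-measure reformulation $f'$). One step needs tightening: the value $\gamma(\wt F_m)\bmod (g-1)$ is \emph{not} independent of $m$ if $t^m\wt{c'}$ is allowed to lie on either side of $\wt c$ --- if the lift adjacent to $\wt c$ on one side cobounds genus $h$, a lift on the other side cobounds genus congruent to $-h$, since the two regions glue to a union of fundamental domains of total genus a multiple of $g-1$. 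So you must fix a side once and for all, as the paper does by always choosing lifts in the component $\wt S^+$ with $t\cdot \wt S^+\subset \wt S^+$; with that convention your well-definedness and properness arguments go through. It would also be worth a line verifying that the color depends only on the isotopy class of $c'$ (the paper checks this by lifting an isotopy and translating it off itself), but this is routine.
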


\begin{Rem} Although its clique number and chromatic number are equal, the graph $\cc_v(S_g)$ is typically not {\em perfect}: not all of its induced subgraphs have this property. Figure \ref{not perfect pic} displays an induced five-cycle in $\cc_v(S_g)$ for $g \ge 6$.
\end{Rem}

\begin{figure}
	\centering
	\begin{minipage}{.45\textwidth}
	\centering
	\includegraphics[width=6cm]{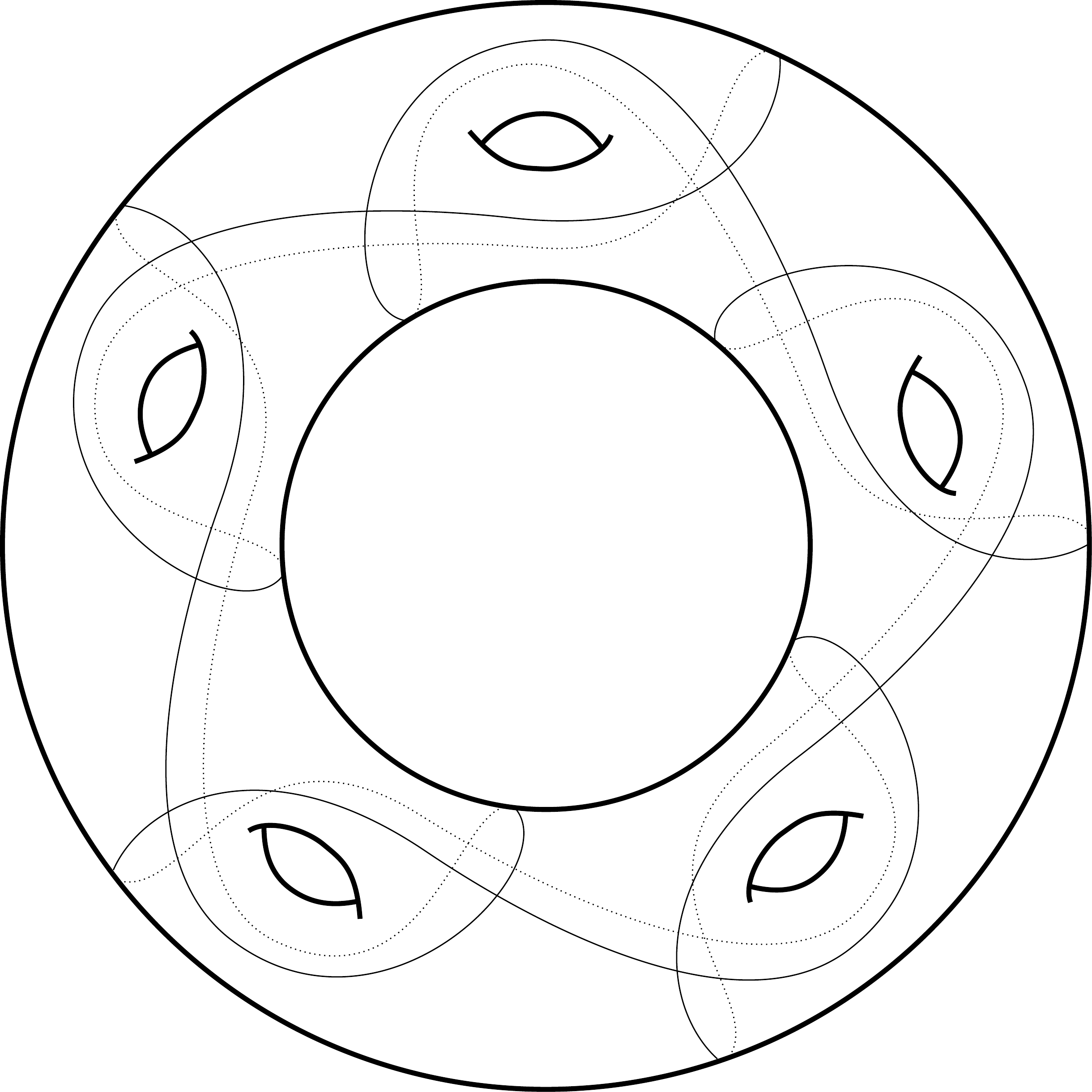}
	\subcaption{An induced five-cycle.}
	\label{not perfect pic}
	\end{minipage} \hfill
	\begin{minipage}{.45\textwidth}
	\centering
	\includegraphics[width=6cm]{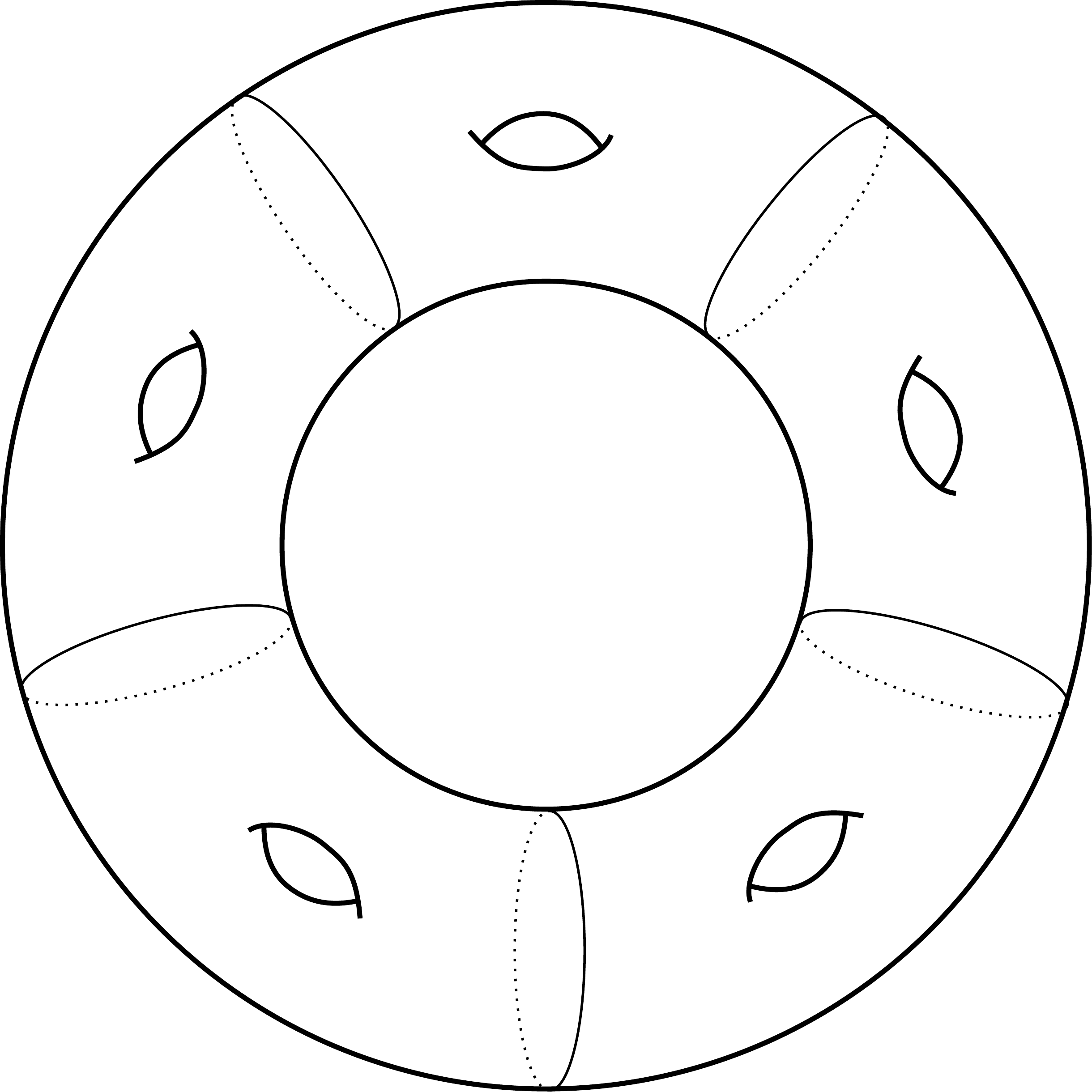}
	\subcaption{A maximal clique.}
	\label{maximal clique pic}
	\end{minipage}\hfill
	\caption{Subgraphs of $\cc_v(S_6)$.}
	\label{subgraphs pics} 
\end{figure}

The idea behind the proof of Theorem \ref{thm: cc_v(S_g)} is simple, but it takes some work to establish, which we bundle into a few Lemmas in covering space theory.
Suppose that $c$ and $d$ are homologous, oriented, simple closed curves in $S_g$.
Define an {\em immersion (of genus $h$) from $c$ to $d$} to be an immersion $i : S^2_h \to S_g$ that maps one oriented boundary component injectively to $-c$ and the other to $d$.

\begin{Lem}
\label{lem: immerse exist}
There exists an immersion between any pair of homologous, oriented, non-separating simple closed curves on $S_g$.
\end{Lem}

\begin{proof}
Suppose that $c$ and $d$ are simple closed curves that represent the class $v \in H_1(S_g;\bz)$, $v \ne 0$.
The epimorphism $\iota(v,-) : H_1(S_g;\bz) \to \bz$ defines a corresponding infinite cyclic cover $p_v : S_\infty \to S_g$.
This cover admits the following construction, which is reminiscent of, but simpler than, the construction of the infinite cyclic cover of a knot exterior.
Cut open $S_g$ along an open regular neighborhood of $c$ to form the surface $S \smallsetminus N(c) \approx S^2_{g-1}$,
whose oriented boundary consists of two components, labeled $c^+$ and $c^-$.
Form $(S \smallsetminus N(c)) \times \bz$, and identify $c^+ \times \{n\}$ with $c^- \times \{n+1\}$ by an orientation-reversing homeomorphism for all $n \in \bz$.
The resulting quotient space is $S_\infty$, and it comes equipped with a covering map $p_v$ from its construction.
Fix a lift $\widetilde{c}$ to $S_\infty$ and let $t$ denote a generator of the deck transformation group.
The lift $\widetilde{c}$ separates $S_\infty$ into two components $S_\infty^\pm$, labeled so that $t \cdot S_\infty^+ \subset S_\infty^+$.
Since $[d] = v$, we can choose a lift of $d$ to a simple closed curve $\widetilde{d} \subset S_\infty^+$.
The lifts $-\widetilde{c}$ and $\widetilde{d}$ cobound a compact subsurface $T \subset S_\infty^+$, $T\approx S^2_h$ for some $h \ge 0$.
The desired immersion $i$ is simply the restriction of the covering map $p_v$ to $T$.
\end{proof}

The following Lemma shows that every immersion arises in the manner of Lemma \ref{lem: immerse exist}.
It also plays a role in Lemma \ref{lem: immerse unique}.

\begin{Lem}
\label{lem: immerse lift}
An immersion between two curves representing the homology class $v \ne 0$ lifts to an embedding in the infinite cyclic cover $p_v : S_\infty \to S_g$.
\end{Lem}

\begin{proof}
Let $i : S^2_h \to S_g$ denote the immersion.
Write $\del S^2_h = -\del_1 \cup \del_2$ and $c_j = i(\del_j)$, $j=1,2$.
Choose lifts $\widetilde{c_j}$ of each to $S_\infty$ with the property that $\widetilde{c_2} - \widetilde{c_1}$ is the oriented boundary of a compact subsurface $T \subset S_\infty$.
Excise the interior of $T$ and splice in $S^2_h$ by the rule that a point $x \in \widetilde{c_j}$ glues to the unique point $y \in \del_j$ with the property that $p_v(x) = i(y)$.
The result is a surface $S$ with a map $p: S \to S_g$.
Moreover, $p$ is a covering map by the construction and the assumption that $i$ is an immersion.
If a loop $\gamma \subset S_g$ lifts to a loop in $S$, then it lifts to a loop supported in $S \smallsetminus S^2_h$, since $S^2_h$ is compact and $S$ is infinite-sheeted.  As $S \smallsetminus S^2_h$ is homeomorphic to $S_\infty \smallsetminus T$ by a map that respects the covering maps, it follows that $\gamma$ lifts to a loop in $S_\infty$.
Thus, $\mathrm{im}(p_*) \subset \mathrm{im}((p_v)_*)$, and by symmetry, we have $\mathrm{im}(p_*) = \mathrm{im}((p_v)_*)$.
It follows that there exists a covering space isomorphism $\varphi: S \to S_\infty$.
Composing the inclusion from $T$ to $S$ with $\varphi$ produces the required embedding.
\end{proof}

\begin{Lem}
\label{lem: immerse unique}
Any two immersions between a pair of non-separating simple closed curves on $S_g$ have the same genus$\pmod{g-1}$, and this value depends only on the isotopy classes of the curves.
\end{Lem}

\begin{proof}
Suppose that $c$ and $d$ are homologous simple closed curves on $S_g$ and we have two immersions from $c$ to $d$.
By Lemma \ref{lem: immerse lift}, both lift to embeddings under $p_v : S_\infty \to S$.
By applying a deck transformation, we may assume that both embeddings map one boundary component to a fixed lift $\widetilde{c}$.
Denote by $\widetilde{d}$ and $t^k \cdot \widetilde{d}$ the images of the other boundary components, $k \ge 0$.
The images of the two embeddings differ by the subsurface of genus $k \cdot (g-1)$ cobounded by 
$-\widetilde{d}$ and $t^k \cdot \widetilde{d}$.
It follows that the immersions have the same genus$\pmod{g-1}$, as desired.
The fact that this value depends only on the isotopy classes of $c$ and $d$ follows by an easy argument using the isotopy extension principle.
\end{proof}

Together, Lemmas \ref{lem: immerse exist} and \ref{lem: immerse unique} promote the second part of \cite[Lemma 2]{Irmerchillingworth} to immersions.

\begin{proof}[Proof of Theorem \ref{thm: cc_v(S_g)}]
We build a clique of size $g-1$ in $\cc_v(S_g)$ by cyclically gluing together $g-1$ copies of the surface $S^1_1$ along their boundaries and taking the images of the boundary curves.
See Figure \ref{maximal clique pic}.

We obtain an optimal coloring $f: \cc_v(S_g) \to \bz / (g-1) \bz$ as follows.
Fix an oriented simple closed curve $c \subset S$ representing the class $v$.
Given an oriented simple closed curve $d \subset S$ that also represents $v$, there exists an immersion from $c$ to $d$, by Lemma \ref{lem: immerse exist}.
Its genus $h \pmod{g-1}$ depends only on the isotopy type of $d$, by Lemma \ref{lem: immerse unique}.
We may therefore unambiguously define $f(d) \equiv h \pmod{g-1}$.

To show that the coloring $f$ is proper, suppose that $d_1$ and $d_2$ are disjoint simple closed curves that represent the class $v$.
Thus, there exists a subsurface $T \subset S$ with oriented boundary $d_2 - d_1$ and genus $0 < t < g-1$.
Stacking $T$ onto an immersion $i : S^2_h \to S$ from $c$ to $d_1$ gives an immersion from $c$ to $d_2$ of genus $h + t \not\equiv h \pmod{g-1}$.
It follows that $f(d_1) \ne f(d_2)$, so the coloring is proper, as desired.
\end{proof}


\subsection{Domains}
\label{subsec: domains}

The coloring $f$ described in the proof of Theorem \ref{thm: cc_v(S_g)} admits an alternate description in terms of {\em domains} on $S$ that we will make use of in \S \ref{sec: permuting colors} (see Lemmas \ref{cyclic permutation} and \ref{lemma: chi homo}).
Domains and their combinatorial Euler measures recur throughout Heegaard Floer homology; see \cite[\S2]{Sarkar2011} for a quick, thorough treatment.
In our setting, this description allows us to compute $f$ directly on $S$, without passing to the cover $S_\infty$.

As in the proof of Theorem \ref{thm: cc_v(S_g)}, fix an oriented simple closed curve $c$ that represents $v$, and choose another oriented simple closed curve $d$ that also represents $v$.
Position $c$ and $d$ to meet transversely.
The complement $S - c - d$ consists of a number of connected components whose closures are called {\em regions}.
The classes of the regions form a basis for $H_2(S,c \cup d;\bz)$.
An element of this group is called a {\em domain}.
The neighborhood of each intersection point in $c \cap d$ contains four corners.
For each region $R$, let $e(R)$ denote the Euler characteristic of its interior and $c(R)$ its number of corners.
We define the combinatorial Euler measure of a region $R$ by $m(R) = e(R) - c(R)/4$.
Its definition extends to domains by linearity.
Since $c$ and $d$ are homologous, there exists a domain $D$ with $\del D = d-c$.
The domain is well-defined up to multiples of $[S] = \sum_R [R]$.
It follows that the value $m(D)$ is well-defined modulo $m([S]) = e(S) = -2(g-1)$.
Define $f'(d) = m(D) \, (\mathrm{mod} \, 2(g-1))$.

\begin{Prop}
\label{prop: measure}
The maps $f$ and $f'$ obey the relation $2f=f'$.
\end{Prop}

\begin{proof}
Choose a curve $d$ representing the class $v$.
The decomposition of $S$ into regions induced by $c \cup d$ lifts to one of $S_\infty$.
Choose a subsurface $\Sigma \subset S_\infty$ with $\del \Sigma = \widetilde{d} - \widetilde{c}$.
The projection $p_v(\Sigma)$ is a domain in $(S,c \cup d)$ with boundary $d - c$.
We have $2f(d) \equiv 2g(\Sigma) = m(\Sigma) = m(p_v(\Sigma)) \equiv f'(d) \pmod{2(g-1)}$, using the additivity of $m$ in the third equality.
\end{proof}

As a corollary to Proposition \ref{prop: measure}, the definition of $f'$ descends to isotopy classes of curves.
We could instead establish this fact directly from the definition of $f'$ and an application of the bigon criterion.
We could also base a proof of Theorem \ref{thm: cc_v(S_g)} on the definition of $f'$ instead of $f$.
In particular, $f'$ is a proper coloring, although it is less immediate from its definition that it only takes even values, which is required to see that it is a proper $(g-1)$-coloring and not just a proper $2(g-1)$-coloring.

\begin{Rem}
\label{Irmer remark 1}
In a slightly different guise, the map $f'$ was considered by Irmer: it is twice the \emph{signed length} from the chosen basepoint $c\in \cc_v(S)$ \cite[Lemma 2]{Irmerchillingworth}.
We note that Irmer's description of the computation of $f'(d)$ requires a path from $c$ to $d$ in $\cc_v(S_g)$, whereas ours requires only a realization of $c$ and $d$ by transverse simple closed curves.
For instance, for the curves $c$ and $d$ pictured in Figure \ref{fig:domainColorExample}, $f'(d)\equiv 0$.
In particular, $c$ and $d$ are curves of the same color.
\end{Rem}

\begin{figure}[h]
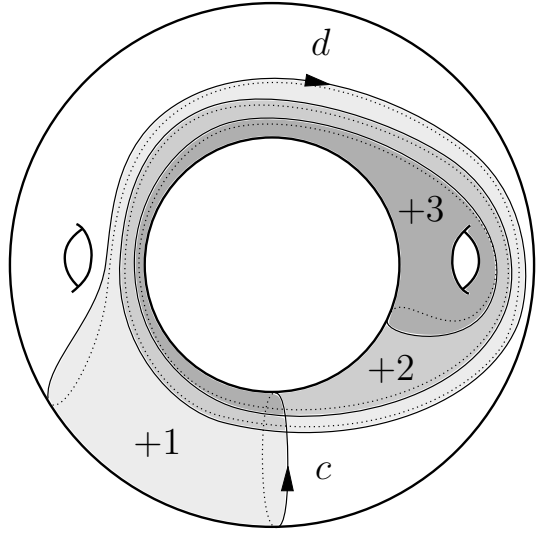

	\centering
	\Large
	\begin{lpic}{domainColorExample-new(7cm)}
		\lbl[]{148,27;$c$}
		\lbl[]{147,229;$d$}
		\lbl[]{70,40;$+1$}
		\lbl[]{180,75;$+2$}
		\lbl[]{194,150;$+3$}
	\end{lpic}
	\caption{The sum of the shaded regions, weighted according to labels, form a domain whose boundary is $d-c$. As its combinatorial Euler measure is $-6\equiv 0 \pmod2$, $c$ and $d$ have the same color.}
	\label{fig:domainColorExample}
\end{figure}

\begin{Rem} Given the difference in the order of growth of the chromatic numbers of $\cs(S)$ and $\cc_v(S)$, $v \ne 0$, the reader may wonder how the $(g-1)$-coloring of $\cc_v(S)$ fails for $v=0$. 
For example, even when $g=3$, we have $\omega(\cs(S_3))=3 > 2 = \chi(\cc_v(S_3))$.
The issue is explained in part by the fact that Lemma \ref{lem: immerse exist} fails in this setting: 
there does not exist a consistent way to orient the separating curves so that there exists an immersion between any two, as the reader may check on a maximum clique in $\cs(S_3)$.
Moreover, there exist domains of Euler measure 0 cobounded by disjoint oriented separating curves: take the image of an injective immersion $S_1^1 \sqcup S^1_1 \to S_g$, $g \ge 3$, that reverses orientation on one component.
\end{Rem}


\subsection{The upper bound on $\chi(\cc(S_g))$.}
\label{subsec: upper bound}

In this subsection, we use Theorem \ref{thm: cc_v(S_g)} to obtain the upper bound in Theorem \ref{thm: s_g intro}.
First, we extend Theorem \ref{thm: cc_v(S_g)} to the graph $\cc_{\overline{v}}(S_g)$ induced on curves that can be oriented to represent a fixed non-zero primitive homology class $\overline{v} \in H_1(S_g;\bz/m\bz)$, where $m > 1$ is a positive integer.

\begin{Prop}
\label{prop: (mod m)}
$\chi(\cc_{\overline{v}}(S_g)) = g-1$.
\end{Prop}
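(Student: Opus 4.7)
The plan is to show that the graph $\cc_{\overline{v}}(S_g)$ decomposes, as an abstract graph, into the disjoint union $\bigsqcup_v \cc_v(S_g)$ over integral primitive lifts $v \in H_1(S_g;\bz)$ of $\overline{v}$; granting this, Theorem~\ref{thm: cc_v(S_g)} immediately gives $\chi(\cc_{\overline{v}}(S_g)) = \max_v \chi(\cc_v(S_g)) = g-1$.

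First I would note that every $d \in \cc_{\overline{v}}(S_g)$ has a well-defined primitive integral class $[d] \in H_1(S_g;\bz)$ (choosing the orientation so that $[d] \equiv \overline{v} \pmod m$), and so the vertex set of $\cc_{\overline{v}}(S_g)$ is partitioned by the map $d \mapsto [d]$ into the vertex sets of the $\cc_v(S_g)$. The substantive content is to show that this partition also respects the edges, i.e.\ that disjoint non-isotopic curves $c, d \in \cc_{\overline{v}}(S_g)$ must satisfy $[c] = [d]$ in $H_1(S_g;\bz)$.

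For this key step, note that $c, d$ are both non-separating (since $\overline{v} \neq 0$), so the change of coordinates principle tells us that $\{c, d\}$ is either a bounding pair, in which case $[c] = [d]$ integrally and we are done, or a non-separating pair with connected complement. In the latter case the mapping class group acts transitively on such pairs, and so some $\phi \in \Mod(S_g)$ carries $\{c, d\}$ to a pair of curves from a symplectic basis; in particular, the lattice $L := \langle [c], [d] \rangle$ is a primitive rank-$2$ sublattice of $H_1(S_g;\bz)$. Now the condition $[c] \equiv [d] \pmod m$ means $[c] - [d] = mw$ for some $w \in H_1(S_g;\bz)$, and primitivity of $L$ forces $w \in L$. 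Writing $w = \alpha[c] + \beta[d]$ then yields $m\alpha = 1$ and $m\beta = -1$, contradicting $m > 1$. Hence disjoint non-isotopic curves in $\cc_{\overline{v}}(S_g)$ cannot form a non-separating pair, completing the key step.

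Combining the above with Theorem~\ref{thm: cc_v(S_g)} finishes the proof. The main obstacle is the primitivity of the span $L$ for non-separating pairs; this is the topological heart of the argument, but follows quickly from change of coordinates once one sets it up.
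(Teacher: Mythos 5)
Your proposal is correct and takes essentially the same route as the paper: both decompose $\cc_{\overline{v}}(S_g)$ into the subgraphs $\cc_v(S_g)$ indexed by integral lifts $v$ of $\overline{v}$ and then invoke Theorem \ref{thm: cc_v(S_g)}. The difference is one of emphasis. The paper asserts the key fact---that disjoint representatives of $\overline{v}$ represent a common integral class---in a single sentence, whereas you actually prove it (bounding pair versus connected complement, plus primitivity of the rank-two span $L$); that argument is sound. Two caveats. First, when $m=2$ the normalization ``orient $d$ so that $[d]\equiv\overline{v}\pmod m$'' does not single out an orientation, and in the bounding-pair case the normalized classes may satisfy $[c]=-[d]$ rather than $[c]=[d]$; this is harmless because vertices are unoriented curves and $\cc_v(S_g)=\cc_{-v}(S_g)$, so $c$ and $d$ still lie in a common piece, but your claim ``$[c]=[d]$ integrally'' should be stated up to sign. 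Second, the bulk of the paper's written proof concerns precisely the step you dispatch with ``immediately gives'': to color the disjoint union one must choose a $(g-1)$-coloring (equivalently a basepoint curve $c_v$) for each of the infinitely many lifts $v\in\cv$, which a priori uses countable choice; the paper replaces these choices by the explicit Meeks--Patrusky algorithm so that the simultaneous coloring is canonical. Under standard conventions this is a refinement rather than a gap in your argument, but it is the one ingredient of the paper's proof that yours omits.
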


\begin{proof}
Let $V \subset H_1(S_g;\bz)$ denote the set of elements that reduce to $\overline{v} \pmod m$.
If $c$ and $d$ are disjoint representatives of $\overline{v}$, then they represent the same element $v \in V$.
It follows that $\cc_{\overline{v}}(S_g)$ is the disjoint union of the induced subgraphs $\cc_v(S_g)$, $v \in V$.
The $(g-1)$-coloring $\cc_v(S_g) \to \bz / (g-1) \bz$ described in the proof of Theorem \ref{thm: cc_v(S_g)} depends on a choice of representative $c_v \in \cc_v(S_g)$ for each $v \in V$.
A priori, the simultaneous existence of these representatives depends on the axiom of (countable) choice.
However, we can remove this dependence as follows.
Meeks and Patrusky establish an algorithm to produce a curve $c_v$ representing the class $v$ \cite[\S~1]{MeeksPatrusky} (see also \cite[Prop.~6.2]{Farbprimer}).
The algorithm depends on a fixed choice of geometric symplectic basis for $S_g$ and finitely many paths in $S_g$, and these choices are independent of $v$.  Taking the representative $c_v$ of $v$ output by the algorithm for all $v \in V$ produces the desired explicit set of representatives.
Their existence produces a simultaneous proper $(g-1)$-coloring of all $\cc_v(S_g)$, $v \in V$, whence the desired proper $(g-1)$-coloring of $\cc_{\overline{v}}(S_g)$.
\end{proof}

Recall the subgraph $\cn(S_g)$ of $\cc(S_g)$ induced on the nonseparating curves.

\begin{Cor}
\label{cor: chi non-sep}
$\chi(\cn(S_g)) \le (g-1) \cdot (2^{2g}-1)$.
\end{Cor}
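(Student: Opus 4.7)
The plan is to decompose $\cn(S_g)$ along $\bz/2\bz$-homology classes and apply Proposition \ref{prop: (mod m)} to each piece with $m=2$. Any non-separating simple closed curve $c$ has primitive homology class in $H_1(S_g;\bz)$, so its mod-$2$ reduction is a non-zero element of $H_1(S_g;\bz/2\bz)\cong (\bz/2\bz)^{2g}$, a set of size $2^{2g}-1$. Moreover, every non-zero element of a $\bz/2\bz$-vector space is primitive, so the hypotheses of Proposition \ref{prop: (mod m)} are met for every $\overline v \in H_1(S_g;\bz/2\bz)\setminus\{0\}$.

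First I would observe that the vertex set of $\cn(S_g)$ is the disjoint union of the vertex sets of $\cc_{\overline v}(S_g)$ as $\overline v$ ranges over $H_1(S_g;\bz/2\bz)\setminus\{0\}$. For each such $\overline v$, fix a proper $(g-1)$-coloring $f_{\overline v}\co \cc_{\overline v}(S_g)\to \bz/(g-1)\bz$ as furnished by Proposition \ref{prop: (mod m)}. Combine them into
\[
F\co \cn(S_g) \to \bigl(H_1(S_g;\bz/2\bz)\setminus\{0\}\bigr)\times \bz/(g-1)\bz, \qquad F([c]) = \bigl(\overline{[c]},\, f_{\overline{[c]}}([c])\bigr),
\]
which uses exactly $(2^{2g}-1)(g-1)$ colors.

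All that remains is to verify that $F$ is proper. Given disjoint non-separating curves $c$ and $d$, either their mod-$2$ homology classes differ, in which case the first coordinates of $F([c])$ and $F([d])$ already differ; or they coincide, in which case the edge $\{[c],[d]\}$ lies in a single piece $\cc_{\overline v}(S_g)$, where $f_{\overline v}$ assigns distinct colors by construction. There is no real obstacle to this argument: the substance lies entirely in Proposition \ref{prop: (mod m)}, and this corollary amounts to a bookkeeping combination of those colorings across the $2^{2g}-1$ non-zero classes.
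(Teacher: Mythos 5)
Your proposal is correct and follows the same route as the paper: partition $\cn(S_g)$ by non-zero mod-$2$ homology class (there are $2^{2g}-1$ such classes), apply Proposition \ref{prop: (mod m)} with $m=2$ to each piece, and use a separate palette for each class, which is exactly your product coloring $F$. The properness check and the count $(g-1)\cdot(2^{2g}-1)$ match the paper's argument.
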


\begin{proof}
Give each subcomplex $\cc_{\overline{v}}(S_g)$, $\overline{v} \in H_1(S_g;\bz/2\bz)^\times$, a proper $(g-1)$-coloring by Proposition \ref{prop: (mod m)}, using a different color pallette for each.  As there are $2^{2g}-1$ elements in $H_1(S_g;\bz/2\bz)^\times$, the stated bound follows.
\end{proof}

\begin{proof}
[Proof of Theorem \ref{thm: s_g intro}]
Immediate from Theorem \ref{thm:separating} and Corollary \ref{cor: chi non-sep}.
\end{proof}


\subsection{Unique colorability of $\cc_v(S)$}
\label{subsec: uniqueness}

A graph is {\em uniquely $k$-colorable} if it admits a proper $k$-coloring and any two proper $k$-colorings are related by a bijection between the color sets.
In other words, the graph admits a unique partition into $k$ independent sets.
We establish the following complement to Theorem \ref{thm: cc_v(S_g)}:

\begin{Thm}
\label{thm: C_v uniquely colorable}
$C_v(S_g)$ is uniquely $(g-1)$-colorable.
\end{Thm}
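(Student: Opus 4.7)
The proof is essentially an assembly of the machinery already built up in this section. The plan is to verify that $\cc_v(S_g)$ satisfies the three hypotheses of Lemma \ref{lem: unique}, with $k = g-1$. Two of the three are already in hand: Theorem \ref{thm: cc_v(S_g)} gives that $\cc_v(S_g)$ is properly $(g-1)$-colorable, and Proposition \ref{prop: connected} gives that $\ck(\cc_v(S_g))$ is connected. What remains is to check that $\cc_v(S_g)$ is pure of dimension $g-2$.

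For purity, I would argue topologically. Given a simplex in $\cc_v(S_g)$, that is, a collection of pairwise disjoint curves $c_1,\ldots,c_k$ each orientable to be homologous to $v$, I must show it extends to a collection of $g-1$ such curves (matching the clique number). Cut $S_g$ along $c_1 \cup \cdots \cup c_k$ to obtain a possibly disconnected surface $\Sigma$ with $2k$ boundary components and $\chi(\Sigma) = 2-2g$. The boundary components of $\Sigma$ pair up, and consistent orientations induced by the chosen orientations of the $c_i$ split each boundary into a ``positive'' and ``negative'' component. If $k < g-1$, then some component of $\Sigma$ has positive genus or contains more than one pair of boundary components; in either case it contains a non-separating simple closed curve $c_{k+1}$ that separates its positive boundary components from its negative ones, and this curve is automatically homologous to $v$ in $S_g$ (since the complementary subsurface it cobounds with some subset of the $c_i$ provides the 2-chain). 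Iterating extends any simplex to one of dimension $g-2$.

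With purity in hand, the theorem follows immediately from Lemma \ref{lem: unique}. The main obstacle is the bookkeeping in the purity argument, but this is essentially the same topological observation that drives the clique number computation for $\cc_v(S_g)$, so I do not expect any genuine difficulty. The entire proof can be written in just a few lines once this verification is noted.
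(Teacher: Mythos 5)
Your overall architecture is exactly the paper's: the proof there consists precisely of citing Theorem \ref{thm: cc_v(S_g)} for the proper $(g-1)$-coloring, Proposition \ref{prop: connected} for connectivity of $\ck(\cc_v(S_g))$, and Lemma \ref{lem: unique} to conclude. (The paper does not spell out purity of $\cc_v(S_g)$; it is used implicitly, so your instinct to verify it is reasonable.)

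However, the purity verification you supply does not work as written. Your new curve $c_{k+1}$ is chosen to separate \emph{all} the positively oriented boundary components of a complementary piece from \emph{all} the negatively oriented ones. Each complementary piece has the same number $a$ of positive and negative boundary curves, and your own justification (the cobounding subsurface as a $2$-chain) then shows $[c_{k+1}] = a\,v$, which is not $v$ whenever $a \ge 2$ (recall $v$ is primitive and of infinite order). This is not a vacuous worry: for instance, two disjoint curves in class $v$ with connected complement give a single piece with two positive and two negative boundary curves, and there your construction produces a curve in class $2v$. Worse, in a genus-one piece with exactly one positive and one negative boundary curve, \emph{every} curve separating the positives from the negatives is boundary-parallel, hence not a new vertex at all; such pieces do occur for non-maximal simplices. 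Also note that your dichotomy is misleading: an Euler characteristic count ($\sum_i \chi(\Sigma_i) = 2-2g$ together with connectivity of the dual graph) shows that when $k \le g-2$ some complementary piece automatically has positive genus, so the ``more than one pair of boundary components'' alternative buys you nothing by itself --- a planar piece with two positive and two negative boundary curves in fact contains \emph{no} essential, non-peripheral curve in class $v$.

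The statement you want is still true, but it needs a finer construction: in a piece of genus $h \ge 1$, cut off a genus-one subsurface containing exactly one positively oriented boundary curve and no other boundary curves; the resulting curve is homologous to $v$, and it is non-peripheral in the piece unless the piece is a genus-one surface with exactly two boundary curves. One then checks, again by the Euler characteristic count, that if every complementary piece were of one of the two ``sterile'' types (genus one with one $+$/$-$ pair, or a four-holed sphere with two $+$/$-$ pairs), the number of curves would already be at least $g-1$; so for $k \le g-2$ a productive piece exists. Finally, one should also say a word (an innermost-annulus argument suffices) about why the new curve is not isotopic in $S_g$ to one of the $c_i$ lying on the boundary of a \emph{different} piece. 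With those repairs your proof is complete and coincides with the paper's modulo the purity point it leaves implicit.
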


Let $S = S^b_g$, $g \ge 1$, $b \in \{0,2\}$.
If $b=0$, let $v \in H_1(S;\bz)$ denote a primitive, non-zero class, and if $b = 2$, let $v$ denote the peripheral class.
Recall the maximal clique graph $\ck(\cc)$ of $\cc$, whose vertices consist of the maximal simplices in $\cc$, and where two maximal simplices are adjacent if they meet in a codimension-1 face.

\begin{Prop}
\label{prop: connected}
$\ck(\cc_v(S))$ is connected.
\end{Prop}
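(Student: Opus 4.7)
The plan is to apply the connectivity criterion of Putman \cite{Putmanconnectivity}: if a group $G$ acts on a simplicial complex $X$ transitively on vertices, and there exist a vertex $x_0 \in X$ and a generating set $\{\phi_i\}$ of $G$ with each $\phi_i(x_0)$ connected to $x_0$ by an edge path in $X$, then $X$ is connected. I take $X = \ck(\cc_v(S))$ and $G \le \Mod(S)$ the stabilizer of $v \in H_1(S;\bz)$ up to sign; such elements act simplicially on $\cc_v(S)$ and hence on $\ck(\cc_v(S))$.

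First I would verify transitivity: any two maximal simplices $\sigma, \sigma' \in V(\ck(\cc_v(S)))$ are related by an element of $G$. The change-of-coordinates principle supplies such an element once one checks that $\sigma$ and $\sigma'$ cut $S$ into collections of subsurfaces of the same topological type. The topological type is pinned down by the clique number of $\cc_v(S)$ (namely, $g-1$ for $S=S_g$ and $g$ for $S=S_g^2$) together with the constraint that each piece's oriented boundary represents a prescribed multiple of $v$ in $H_1(S)$.

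Next I would fix a basepoint simplex $\sigma_0$ coming from a geometric symplectic basis of $H_1(S;\bz)$ having $v$ as a basis element, and select a finite generating set $\{\phi_i\}$ for $G$, obtained for instance by intersecting Humphries-style Dehn twist generators of $\Mod(S)$ with the stabilizer of $v$. By Putman's criterion, connectedness of $\ck(\cc_v(S))$ reduces to the claim that for each $\phi_i$ there is an edge path in $\ck(\cc_v(S))$ from $\sigma_0$ to $\phi_i(\sigma_0)$.

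The main obstacle will be verifying the edge path condition one generator at a time. Dehn twists along curves of $\sigma_0$ or along curves disjoint from $\sigma_0$ fix $\sigma_0$ setwise, so the path is trivial. The remaining generators should be realized as compositions of single \emph{swap} moves — each replacing one curve $c \in \sigma_0$ by a curve $c'$ homologous to $v$ and disjoint from $\sigma_0 \setminus \{c\}$, yielding a codimension-$1$ adjacency in $\ck(\cc_v(S))$. Topologically, each swap is a surgery within a component of $S \setminus \sigma_0$, and an astute choice of generating set — ideally one in which each generator moves $\sigma_0$ by a bounded number of swaps — makes the geometric verification tractable.
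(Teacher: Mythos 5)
Your overall strategy (Putman's Lemma 2.1: transitive action plus edge paths for a generating set) is exactly how the paper handles the case $S=S_g^2$, where the group can be taken to be all of $\PMod(S_g^2)$ — every pure mapping class fixes the peripheral class $v$ automatically — and the Humphries twist generators are checked, one by one, to move a chosen maximal simplex by at most a single codimension-$1$ swap. For that case your outline is fine. The genuine gap is in the closed case $S=S_g$. There you take $G=\Stab_v(\Mod(S_g))$ and propose to generate it "by intersecting Humphries-style Dehn twist generators of $\Mod(S)$ with the stabilizer of $v$." Intersecting a generating set of a group with a subgroup does not in general yield a generating set of that subgroup, and here it certainly is not known to: $\Stab_v(\Mod(S_g))$ contains the entire Torelli group $\ci_g$ together with a preimage of the symplectic stabilizer of $v$, and the handful of Humphries twists whose curves have zero algebraic intersection with $v$ have no claim to generate all of this. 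In fact the paper explicitly remarks, right after its proof, that no nice generating set for $\Stab_v(\Mod(S_g))$ analogous to the Humphries set is known, and that producing one is precisely what would be needed to run your argument directly in the closed case. Your final step ("an astute choice of generating set ... makes the geometric verification tractable") is therefore not a technicality but the missing content: without an explicit, geometrically controlled generating set you can neither list the generators nor verify the edge-path condition for them.

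The paper circumvents this by reducing the closed case to the boundary case rather than applying Putman's lemma to $\Stab_v(\Mod(S_g))$. Given maximal simplices $K,K'$ in $\cc_v(S_g)$ ($g\ge 3$), it uses connectivity of $\cc_v(S_g)$ (Putman, Thm.~1.9) to get a path of vertices $c=a_1,\dots,a_n=c'$, chooses maximal simplices $K_i\supset\{a_i,a_{i+1}\}$, and observes that $\lk(a_i)\cong \cc_{v'}(S^2_{g-1})$ with $v'$ the peripheral class; the already-proved $b=2$ case then provides a chain of maximal cliques in each link, and coning with $a_i$ concatenates these into a path from $K$ to $K'$ in $\ck(\cc_v(S_g))$. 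If you want to salvage your proposal for $S_g$, you would either need to prove (or cite) a suitable finite, geometrically explicit generating set for $\Stab_v(\Mod(S_g))$ and carry out the swap verification for each generator, or adopt a reduction to the boundary case along the lines above.
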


\begin{proof}
First, suppose that $b = 2$.
We show that $\ck(\cc_v(S))$ is connected in this case using Putman's technique \cite[Lemma 2.1]{Putmanconnectivity}.
Since $v$ is the peripheral class, the mapping class group $\PMod(S)$ acts on $\ck(\cc_v(S))$, and by the classification of surfaces, it does so transitively.
Next, consider Figure \ref{fig: unique}.
The Dehn twists about the red curves shown there constitute the Humphries generating set for $\PMod(S^2_{g-1})$ \cite[Fig.~4.10]{Farbprimer}.
The blue curves shown there constitute a maximal simplex $K$ in $\cc_v(S)$.
Observe that each red curve meets at most one blue curve, and the algebraic intersection number between any red and blue curve is 0.
It follows that a Humphries generator either preserves $K$ or moves it to another maximal simplex that meets $K$ in a codimension-1 face.
Therefore, \cite[Lemma 2.1]{Putmanconnectivity} applies to show that $\ck(\cc_v(S))$ is connected.

\begin{figure}
\includegraphics[width=5in]{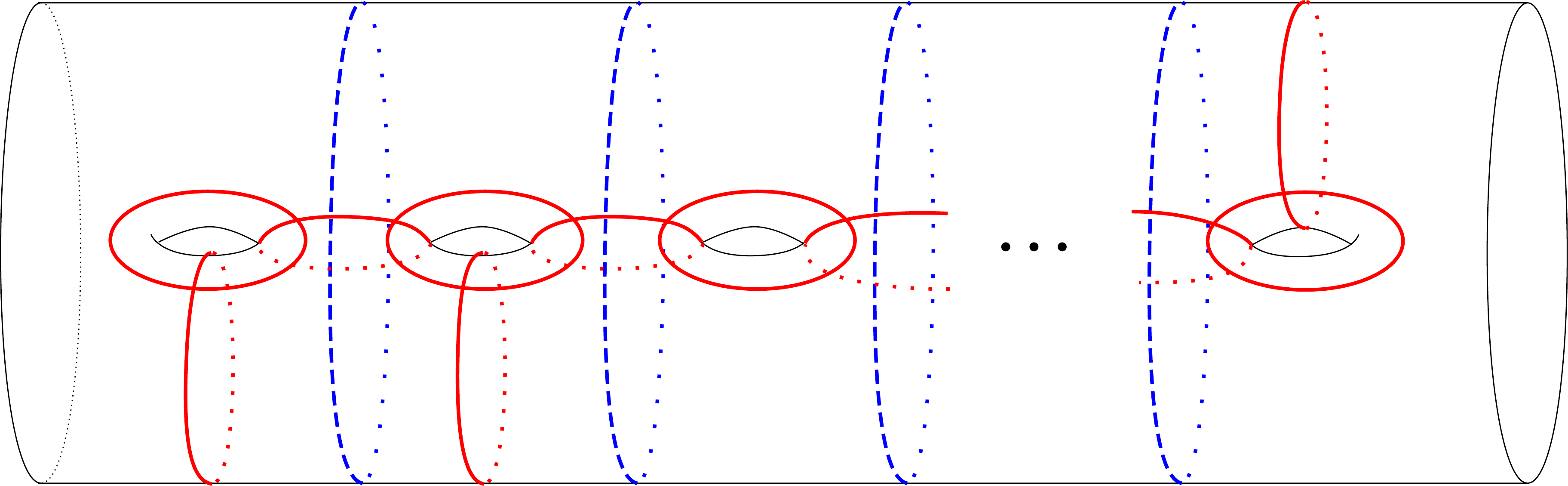}
\caption{The surface $S^2_g$.  The Dehn twists about the solid red curves are Humphries generators for $\PMod(S^2_g)$.  The dashed blue curves comprise a maximal simplex in $\cc_v(S^2_g)$, where $v$ denotes the peripheral homology class.}
\label{fig: unique}
\end{figure}

Next, suppose that $b=0$.
If $g =1$, then $\cc_v(S)$ is empty, and if $g = 2$, then $\cc_v(S)$ has no edges, and the desired result is trivial in either case.
Suppose then that $g \ge 3$.
Choose a pair of maximal simplices $K$ and $K'$ in $\cc_v(S)$, a curve $c$ in $K$, and a curve $c'$ in $K'$.
As $\cc_v(S)$ is connected \cite[Thm.~1.9]{Putmanconnectivity}, there exists a path $c=a_1, a_2, \dots, a_n=c'$ in $\cc_v(S)$ connecting $c$ and $c'$.
Thus, there exists a maximal simplex $K_i$ containing $a_i$ and $a_{i+1}$ for all $i=1,\dots,n-1$.
Additionally, set $K_0 = K$ and $K_n = K'$.
For all $i$, the link $\lk(a_i)$ is isomorphic to $\cc_{v'}(S^2_{g-1})$, where $v'$ denotes the peripheral class.
Therefore, there exists a sequence of maximal cliques in $\lk(a_i)$, beginning with $K_{i-1} \cap \lk(a_i)$ and ending with $K_i \cap \lk(a_i)$, such that any two in sequence meet in a codimension-1 face.
Taking the joins of these simplices with $\{a_i\}$ results in a path from $K_{i-1}$ to $K_i$ in $\ck(\cc_v(S))$.
The concatenation of these paths is a path from $K$ to $K'$ in $\ck(\cc_v(S))$.
Therefore, $\ck(\cc_v(S))$ is connected.
\end{proof}

A simplicial complex is \emph{pure of dimension $l$} if every simplex is contained in an $l$-simplex.
The proof of the following result is straightforward and left as an exercise to the reader.

\begin{Lem}
\label{lem: unique}
If a simplicial complex $\cc$ is pure of dimension $k-1$ and properly $k$-colorable, and $\ck(\cc)$ is connected, then $\cc$ is uniquely $k$-colorable. \qed
\end{Lem}

\begin{proof}[Proof of Theorem \ref{thm: C_v uniquely colorable}]
Theorem \ref{thm: cc_v(S_g)} shows that $\chi(C_v(S_g)) = g-1$.
Proposition \ref{prop: connected} shows that $\ck(\cc_v(S))$ is connected.
Lemma \ref{lem: unique} now gives the desired result.
\end{proof}


\subsection{Surfaces with boundary}
\label{subsec: with boundary}

We now turn to the case of a surface with boundary $S = S_g^b$, $b \ge 1$.
More cases arise due to the distinction of whether a separating homology class is null-homologous, peripheral, or not.
As with a closed surface, we find a qualitative difference between $v=0$ and $v \ne 0$.
However, for each type of $v$, the growth of $\chi(\cc_v(S))$ is governed by $g$ and not by $b$.

\begin{Thm}
\label{thm: nullhomologous}
$\chi(\cc_0(S_g^b)) = \Theta(g \cdot \log g)$.
\end{Thm}

\begin{proof}
Given $c\in \cc_0(S)$, let $F(c)$ denote the component of $S\smallsetminus c$ containing $\del S$.  Let $G_0(S)$ denote the subgraph induced on the curves $c$ for which $F(c)$ has  genus 0, and let $G_+(S)$ denote the subgraph on curves for which $F(c)$ has positive genus.
Note that $G_0(S)$ is an independent set in $\cc_0(S)$, and it consists of precisely the curves in $\cc_0(S)$ whose images are inessential under the inclusion $i: S \into S_g$ obtained by capping off each component of $\del S$ by a disk.
It follows that $i$ induces a map $f: G_+(S) \to \cs(S_g)$.
If $c$ and $d$ span an edge in $G_+(S)$, then $S \smallsetminus c \cup d$ consists of three components of positive genus, and the same is true of $S_g \smallsetminus f(c) \cup f(d)$.
It follows that $f$ is a homomorphism.
(However, $f$ is typically {\em not} injective: distinct curves in $G_+(S)$ related by a Dehn twist about a curve in $G_0(S)$ will have the same image under $f$.)
Since the chromatic number is monotone under homomorphisms, it follows that $\chi(G_+(S)) \le \chi(\cs(S_g)) = O(g \cdot \log g)$, and using one more color on $G_0(S)$ gives the required upper bound on $\chi(\cc_0(S))$.

For the lower bound, embed a planar surface $\Sigma_{g+1} \into S$ so that one component of $\del \Sigma_{g+1}$ is in $G_0(S)$ and all of the others bound disjoint subsurfaces $S_1^1 \subset S$.
The inclusion induces an embedding $\cc(\Sigma_{g+1}) \into \cc_0(S)$, and Theorem \ref{thm:planar} supplies the lower bound.
\end{proof}

\begin{Thm}
\label{thm: non-nullhomologous}
For $v \ne 0$, $\cc_v(S_g^b)$ is uniquely $t$-colorable, where
\[
t=
\begin{cases}
g-1, & \textup{if } v \textup { is separating and } b=2; \\
g, & \textup{if }  v \textup{ is non-separating, or if } v \textup{ is peripheral and } b > 2; \\
g+1, & \textup{if } v \textup{ is non-peripheral and separating.}
\end{cases}
\]
Moreover, $t = \chi(\cc_v(S_g^b)) = \omega(\cc_v(S_g^b))$.
\end{Thm}

\begin{proof}[Proof sketch.]
First, we describe the optimal coloring.
For the case of a separating class $v$, fix a boundary component $\del_0 \subset \del S$, and color a curve $c \in C_v(S)$ by the genus of the subsurface of $S \smallsetminus c$ containing $\del_0$.
For the case of a non-separating class $v$, we adapt the coloring described in Theorem \ref{thm: cc_v(S_g)}.
Now the lifts $\widetilde{c}$ and $\widetilde{d}$ cobound a compact subsurface $\widetilde{T} \subset S_\infty$ with some components of $\del S_\infty$.
We color $d$ by adding the genus of $\widetilde{T}$ to the number of preimages of $\del_0$ in $\del \widetilde{T}$ and reducing$\pmod g$.
In each case, it is straightforward to check that the coloring so described is proper, uses the stated number of colors, and that there exists a clique of the stated size.  Figure \ref{fig:homologous-separating} displays a maximal clique in the last case.
The proof of unique colorability proceeds along the lines of in \S \ref{subsec: uniqueness} with minor changes, using the generating set for $\PMod(S^b_g)$ displayed in \cite[Fig.~4.10]{Farbprimer}.
\end{proof}

\begin{figure}[h]
\centering
\includegraphics[width=11cm]{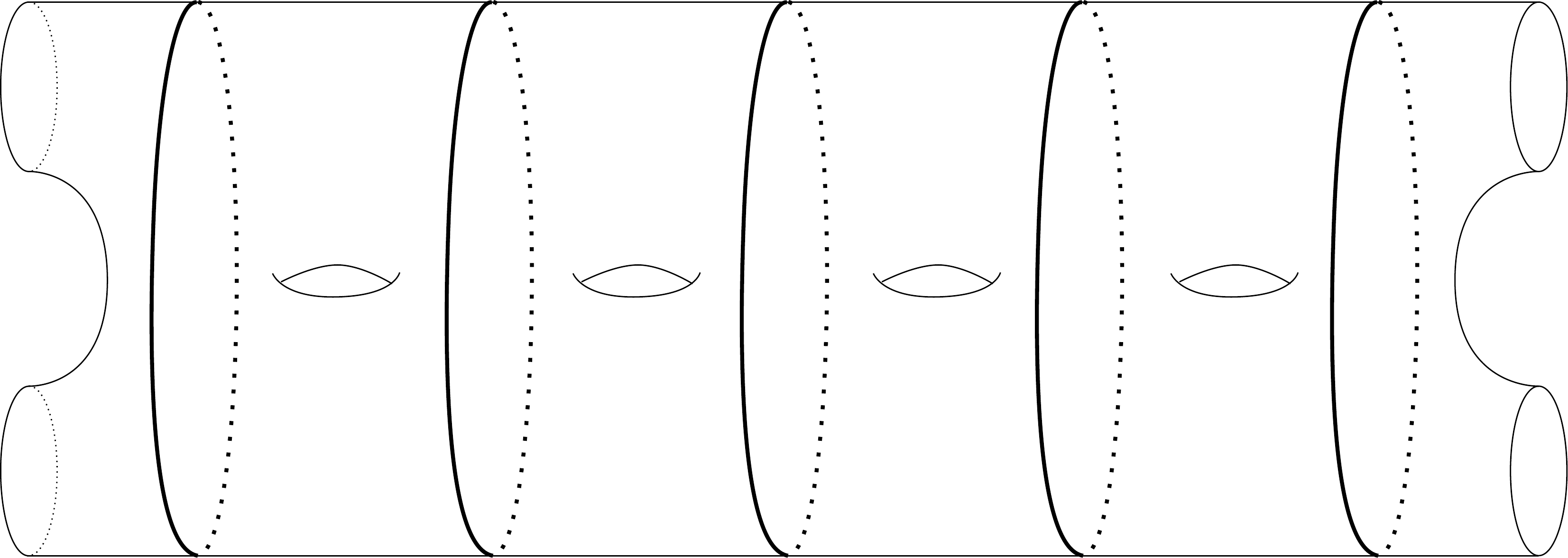}
\caption{
A $(g+1)$-clique in $\cc_v(S_g^b)$, $v$ a non-peripheral separating class.
}
\label{fig:homologous-separating}
\end{figure}


\section{Permuting the colors and the Johnson homomorphism}
\label{sec: permuting colors}

Throughout this section, we restrict to the case of a closed surface $S=S_g$.
Because $\cc_v(S)$ is uniquely $(g-1)$-colorable, any automorphism of it must permute the colors in a $(g-1)$-coloring. 
We therefore obtain a homomorphism from the automorphism group $\Aut(\cc_v(S))$ to the symmetry group of the colors. 
In this section, we investigate the restriction of this homomorphism to the Torelli group, which arises as a subgroup of $\Aut(\cc_v(S))$. 
As a result, we find a precise relationship between the permutation of the colors and the well-studied Johnson homomorphism. 
For background and references on the Torelli group and the Johnson homomorphism, see \cite[\S\S6.5-6.6]{Farbprimer} and \cite{Putmanjohnson}.


\subsection{The action of the Torelli group on the color classes}
\label{subsec: action of Torelli}
Set $H = H_1(S;\bz)$, let $v \in H$ denote a primitive, non-zero class, and let $\ci < \Mod(S)$ denote the {\em Torelli group}, the mapping classes that act trivially on $H$.
Because $\ci$ acts by graph automorphisms of $\cc_v(S)$, Theorem \ref{thm: C_v uniquely colorable} implies that we obtain a homomorphism $\chi_v:\ci \to \Sym(g-1)$ that records the permutation of the colors in a $(g-1)$-coloring of $\cc_v(S)$.
Fix an orientation-preserving homeomorphism $\phi$ representing a mapping class in $\ci$.
Note that the coloring $f: \cc_v(S) \to \bz / (g-1)\bz$ produced in Theorem \ref{thm: cc_v(S_g)} required the choice of a fixed oriented curve $c\in \cc_v(S)$.
The following result shows that the permutation induced by $\phi$ does not depend on this choice, and that it permutes the colors cyclically:

\begin{Lem}
\label{cyclic permutation}
Let $d$ be any oriented simple closed curve representing $v \in H$, and let 
$C(d, \phi \cdot d)$ be a domain satisfying  $\partial C(d, \phi \cdot d) = \phi \cdot d - d$. 
The permutation $\chi_v(\phi)$ shifts every color by $\frac 12 m( C(d, \phi \cdot d)) \pmod{g-1}.$
\end{Lem}

Recall that $m(C)$ denotes the combinatorial Euler measure of the domain $C$. 

\begin{proof}
Fix an oriented curve $c \in \cc_v(S)$. 
By Proposition \ref{prop: measure}, the coloring $f:\cc_v(S)\to \bz / (g-1)\bz$ is given by 
\[
f(\gamma) = \frac{1}{2} m(C(c,\gamma))\pmod{g-1},
\]
for any curve $\gamma$ representing $v$ and domain $C(c,\gamma)$ satisfying $\del C(c,\gamma) = \gamma - c$.
Let $C(\gamma, \phi\cdot \gamma)$ be a domain with boundary $\phi\cdot \gamma-\gamma$, and note that $C(c,\gamma) + C(\gamma, \phi\cdot \gamma)$ is a domain with boundary $\phi \cdot \gamma - c$. 
Since $m$ is additive, it follows that 
\[
f( \phi \cdot \gamma) = f(\gamma) + \frac{1}{2}m(C(\gamma, \phi\cdot \gamma))  \pmod{g-1}~.
\]

Next, choose a domain $C(d,\gamma)$ with boundary $\gamma-d$. 
Because $\phi$ preserves Euler measure, the domain $C(d,\gamma) + C(\gamma,\phi\cdot \gamma) - \phi\cdot C(d,\gamma)$ has Euler measure $m(C(\gamma,\phi\cdot \gamma))$.
Its boundary is evidently $\phi\cdot d -d$, and it follows by Proposition \ref{prop: measure} that $m(C(\gamma, \phi\cdot \gamma)) \equiv m(C(d,\phi\cdot d)) \pmod{g-1}$.
Thus, $\phi$ acts as claimed on color classes.
\end{proof}

By Lemma \ref{cyclic permutation}, we may record the homomorphism induced by the color permutation by the value $\chi_v (\phi) = \frac{1}{2} m(C(d,\phi\cdot d)) \in \bz / (g-1)\bz$, where $[d]=v$. 
We thereby obtain a map $\chi(\phi): v \mapsto \chi_v(\phi)$ defined on the primitive non-zero classes $v\in H$.

\begin{Lem}
\label{lemma: chi homo}
The map $\chi(\phi)$ extends to a homomorphism $\chi(\phi) : H \to \bz / (g-1)\bz$.
\end{Lem}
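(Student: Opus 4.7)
The plan is to define the extension via the same formula already used for primitive classes, and then verify well-definedness and additivity using basic properties of the combinatorial Euler measure $m$. For any $v \in H$, I would choose a representative 1-cycle $\gamma$ (a formal $\mathbb{Z}$-linear combination of transversely immersed oriented simple closed curves). Since $\phi \in \ci_g$ acts trivially on $H$, the 1-cycle $\phi\cdot\gamma - \gamma$ is a boundary, so one may pick a 2-chain $C(\gamma,\phi\cdot\gamma)$ with $\partial C(\gamma,\phi\cdot\gamma) = \phi\cdot\gamma - \gamma$, and set
$$\chi(\phi)(v) := \tfrac{1}{2}\, m\bigl(C(\gamma,\phi\cdot\gamma)\bigr) \pmod{g-1}.$$
For primitive non-zero $v$, this agrees with Lemma \ref{cyclic permutation}, and for $v = 0$ one may take $\gamma = 0$ to get $\chi(\phi)(0) = 0$.

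Well-definedness proceeds in two stages. First, any two 2-chains bounding $\phi\cdot\gamma - \gamma$ differ by a 2-cycle in $S_g$, which is an integer multiple of the fundamental class $[S]$; since $m([S]) = \chi(S) = 2-2g$, the resulting difference in $\tfrac{1}{2}m$ is a multiple of $g-1$. Second, if $\gamma' = \gamma + \partial D$ is another representative, a direct computation shows that $C(\gamma,\phi\cdot\gamma) + \phi\cdot D - D$ has boundary $\phi\cdot\gamma' - \gamma'$, hence is a valid choice of $C(\gamma',\phi\cdot\gamma')$. Invariance of $m$ under the homeomorphism $\phi$ (since $e(R)$ and $c(R)$ are topological invariants of the region $R$) gives $m(\phi\cdot D) = m(D)$, and additivity of $m$ then yields agreement with the original value. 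Additivity of $\chi(\phi)$ follows immediately: choosing transverse representatives $\gamma_v, \gamma_w$ of $v, w \in H$, the 1-cycle $\gamma_v + \gamma_w$ represents $v+w$, the 2-chain $C(\gamma_v, \phi\cdot\gamma_v) + C(\gamma_w, \phi\cdot\gamma_w)$ has boundary $\phi\cdot(\gamma_v+\gamma_w) - (\gamma_v + \gamma_w)$, and additivity of $m$ yields $\chi(\phi)(v+w) = \chi(\phi)(v) + \chi(\phi)(w)$.

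The main obstacle, as I see it, is foundational rather than conceptual: one must confirm that the combinatorial Euler measure $m$---originally defined only for 2-chains whose boundary lies on a prescribed transverse pair of curves $c \cup d$---extends unambiguously, additively, and in a homeomorphism-invariant way to arbitrary 2-chains with transversely immersed 1-boundary. This reduces to a short check that the quantity $e(R) - c(R)/4$ summed over regions is unchanged under refining the 1-skeleton (by inserting an arc or closed curve), using that the compactly-supported Euler characteristic of an open 1-cell is $-1$ to balance the two T-junctions (four new corners) introduced at each endpoint of an inserted arc. Once this is in place, the three well-definedness and additivity arguments above are essentially formal manipulations with boundaries and the additive functional $m$.
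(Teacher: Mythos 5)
Your proposal is correct and follows essentially the same route as the paper: the paper likewise defines $\chi(\phi)(v)$ by $\tfrac12 m(C(\gamma,\phi\cdot\gamma))$ for any representative $1$-cycle $\gamma$ and proves additivity exactly as you do, by noting that $C(\gamma,\phi\cdot\gamma)+C(\eta,\phi\cdot\eta)$ bounds $\phi\cdot(\gamma+\eta)-(\gamma+\eta)$ and invoking additivity of $m$. The only difference is that the paper simply asserts well-definedness, whereas you spell out the two checks (ambiguity of the bounding $2$-chain up to multiples of $[S]$, and change of representative via $\phi\cdot D - D$) together with the extension of the Euler measure to general domains, which the paper uses implicitly.
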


\begin{proof}
The material of \S \ref{subsec: domains} concerning domains and their Euler measures readily generalizes from oriented simple closed curves to multicurves.
In particular, for a multicurve $\gamma \in C_1(S;\bz)$ and an element $\phi \in \ci$, there exists a domain $C$ such that $\del C = \gamma - \phi \cdot \gamma$.
It is well-defined up to adding a multiple of $[S]$, so we obtain a well-defined value $\chi(\phi)(\gamma) = \frac 12 m(C)\pmod{g-1}$ as before.
It is clear that $\chi(\phi)$ is linear on multicurves, so it defines a homomorphism $C_1(S;\bz) \to \bz/(g-1)\bz$.
If $\gamma$ is null-homologous, then there exists a domain $D$ such that $\del D = \gamma$, so $D - \phi \cdot D$ is a domain with boundary $\gamma - \phi \cdot \gamma$, and we obtain $\chi(\phi)(\gamma) = m(D - \phi \cdot D) = m(D) - m(\phi \cdot D) = 0$.
It follows that $\chi(\phi)$ descends to a homomorphism $H \to \bz / (g-1) \bz$ that extends its definition on primitive classes, as desired.
\end{proof}

Note that $\chi(\phi)$ depends only on the mapping class of $\phi$ and that $\chi(\phi \circ \psi) = \chi(\phi) + \chi(\psi)$.
Thus, we obtain a homomorphism $\chi : \ci \to \mathrm{Hom}(H,\bz / (g-1) \bz)$ that records the simultaneous permutation of the $(g-1)$-colorings of all of the graphs $\cc_v(S)$.
Since $\chi$ has abelian image, the knowledgable reader may rightly suspect that it factors through the Johnson homomorphism $\tau$ for $g \ge 3$.
Indeed, the kernel of $\tau$ is generated by Dehn twists about separating curves for $g \ge 3$ \cite{Johnsonstructure2}, so it suffices to check that $\chi$ vanishes on these.
Given a Dehn twist $\phi$ about a separating curve $c$, choose a geometric basis for $S$ disjoint from $c$.
Lemmas \ref{cyclic permutation} and \ref{lemma: chi homo} demonstrate that $\chi(\phi)$ is trivial by checking its action on this basis, certifying that $\chi$ factors through $\tau$.
In the next subsection, we identify $\chi$ with a precursor to $\tau$ (Theorem \ref{thm: coloring permutation}).
We could conclude this result at once on the basis of Remark \ref{Irmer remark 1} and Irmer's work on the relationship of signed stable length with $\tau$ \cite{Irmerchillingworth}, but we include more detail for completeness.


\subsection{The coloring permutation and the Chillingworth homomorphism}
\label{Johnson homo}

Chillingworth introduced the \emph{winding number} of a regular closed curve around a non-vanishing vector field and applied its study to $\Mod(S)$ \cite{ChillingworthwindingI,ChillingworthwindingII}.
Johnson broadened and clarified Chillingworth's work, reworking the former's construction into the \emph{Chillingworth homomorphism} $t$ and showing that it specializes the \emph{Johnson homomorphism} $\tau$ \cite{Johnsonabelian}.
In the case of a closed surface $S$, these maps take the forms
\[
t:\ci \to\Hom(H,\bz/(g-1)\bz) \quad \mathrm{and} \quad \tau:\ci \to\Hom(H, \wedge^2 H) /j(H),
\]
where $j$ denotes a particular inclusion map.
An algebraic juggle involving $j$ and the algebraic intersection pairing $\iota$ on $H$ produces a map $\overline{\iota} : \Hom(H, \wedge^2 H) / j(H) \to \Hom(H,\bz / (g-1)\bz)$.
The Chillingworth and Johnson homomorphisms for a closed surface are then related by the composition $t = \overline{\iota} \circ \tau$ \cite[Thm.~2]{Johnsonabelian}.
Johnson also computed the image under $t$ of the \emph{bounding pair} maps, which generate $\ci$ for $g \ge 3$ \cite[Cor.~1\&\S6]{Johnsonabelian}:

\begin{Lem}
\label{lem: tau computation}
Suppose that $\alpha$ and $\beta$ form a bounding pair and $S\smallsetminus (\alpha\cup \beta)$ consists of components $\Sigma_1$ and $\Sigma_2$. 
The image under $t$ of the bounding pair map $\phi_{\alpha,\beta}$ is given by
$t(\phi_{\alpha,\beta})(v) = \mathrm{genus}(\Sigma_1) \cdot \iota(v,[\alpha])\pmod{g-1}$.\qed
\end{Lem} 

\begin{figure}
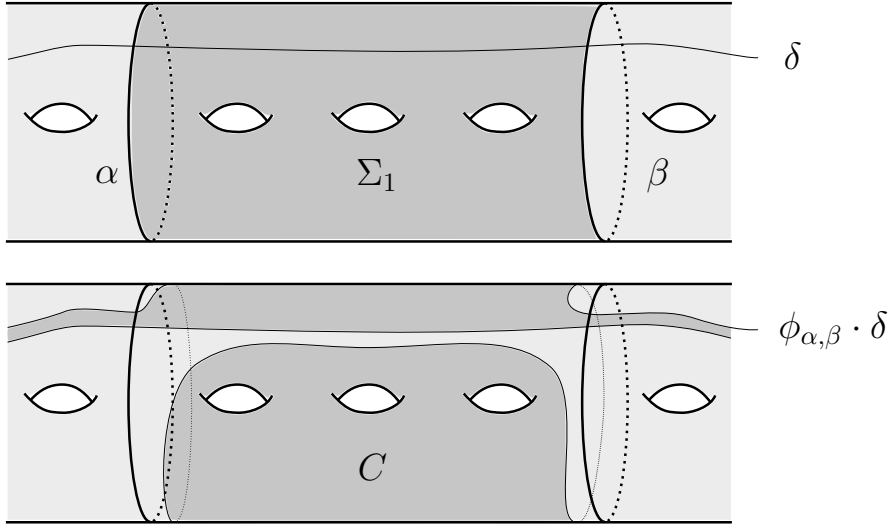

	\centering
	\Large
	\begin{lpic}{colorPerm-new(10cm)}
		\lbl[]{45,155;$\alpha$}
		\lbl[]{290,155;$\beta$}
		\lbl[]{165,155;$\Sigma_1$}
		\lbl[]{350,208;$\delta$}
		\lbl[]{368,86;$\phi_{\alpha,\beta}\cdot \delta$}
		\lbl[]{163,25;$C$}
	\end{lpic}
	\caption{
	The bounding pair $\alpha,\beta$ cuts off the subsurface $\Sigma_1$, the domain $C$ has boundary $\phi_{\alpha,\beta}\cdot \delta - \delta$, and the curve $\delta$ satisfies $\chi_{[\delta]}(\phi_{\alpha,\beta}) \equiv \mathrm{genus}(\Sigma_1)\pmod{g-1}$.}
	\label{fig: color perm}
\end{figure}

\begin{proof}[Proof of Theorem \ref{thm: coloring permutation}]
We follow Irmer's line of argument \cite[Theorem 1]{Irmerchillingworth}.
The assertion holds vacuously when $g \le 2$.
Since $\ci$ is generated by bounding pair maps for $g\ge 3$ \cite{Johnsonstructure1}, it suffices to check the assertion in this case on a bounding pair map $\phi_{\alpha,\beta}$.
Choose a geometric basis for $S$ consisting of $\alpha$, a curve $\delta$ meeting $\alpha$ and $\beta$ both once apiece, and all other curves disjoint from both $\alpha$ and $\beta$.
The fact that $\chi(\phi_{\alpha,\beta}) = t(\phi_{\alpha,\beta})$ is immediate from Lemmas \ref{cyclic permutation}, \ref{lemma: chi homo}, and \ref{lem: tau computation}. Figure \ref{fig: color perm} displays the single non-trivial case.
\end{proof}

As a corollary, we can recast the coloring of $\cc_v(S)$ in terms of the Torelli group and the Chillingworth homomorphism.
Fix $c\in \cc_v(S)$ and choose any $d \in \cc_v(S)$. 
Because $\ci$ acts transitively on $\cc_v(S)$, there exists some $\phi_d\in \ci_g$ so that $\phi_d \cdot c = d$.

\begin{Cor}
\label{cor: re-coloring}
The map $d \mapsto t(\phi_d)$ coincides with the $(g-1)$-coloring $f: \cc_v(S_g) \to \bz / (g-1) \bz$. \qed
\end{Cor}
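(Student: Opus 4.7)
The plan is to combine the coloring formula from Theorem \ref{thm: cc_v(S_g)} with the transitivity of the $\ci_g$-action on $\cc_v(S_g)$, and then to apply Theorem \ref{thm:coloring permutation} to translate the answer into the language of $\overline{\tau}$. The natural interpretation of the stated map is that $\overline{\tau}(\phi_d)\in\Hom(H,\bz/(g-1)\bz)$ recovers the color of $d$ upon evaluation at the class $v$.

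First I would fix $c$ as the basepoint of the coloring $f:\cc_v(S_g)\to\bz/(g-1)\bz$ of Theorem \ref{thm: cc_v(S_g)}, so that $f(c)=0$. For any $d\in\cc_v(S_g)$, transitivity of the $\ci_g$-action supplies $\phi_d\in\ci_g$ with $\phi_d\cdot c=d$, and Lemma \ref{cyclic permutation} applied with $\gamma=c$ yields
\[
f(d) = f(\phi_d\cdot c) = f(c) + \chi_v(\phi_d) = \chi_v(\phi_d),
\]
so the coloring is controlled entirely by the color-permuting homomorphism $\chi$ of \S\ref{subsec: action of Torelli}. Theorem \ref{thm:coloring permutation} then identifies $\chi_v(\phi_d) = \chi(\phi_d)(v) = \overline{\tau}(\phi_d)(v)$, giving the claim.

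The only step requiring attention is well-definedness of the prescription: if $\phi_d$ and $\phi_d'$ both satisfy $\phi_d\cdot c = \phi_d'\cdot c = d$, then $\psi:=\phi_d^{-1}\phi_d'$ fixes $c$. Lemma \ref{cyclic permutation} applied with $\gamma=c$ forces $\chi_v(\psi)=0$, since the 2-chain $C(c,\psi\cdot c)$ may be taken to be zero (and any modification by a multiple of $[S_g]$ shifts its Euler measure by a multiple of $2-2g$, which vanishes after halving modulo $g-1$). By Theorem \ref{thm:coloring permutation} then $\overline{\tau}(\phi_d)(v) = \overline{\tau}(\phi_d')(v)$, so the prescription is unambiguous on the coloring level. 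I do not anticipate any serious obstacle; the substantive work has been accomplished in Lemma \ref{cyclic permutation} and Theorem \ref{thm:coloring permutation}, and the corollary amounts to a bookkeeping consequence of those results and the transitivity of the $\ci_g$-action on $\cc_v(S_g)$.
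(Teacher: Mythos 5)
Your argument is correct and is exactly the route the paper intends: the corollary is stated as an immediate consequence of Lemma \ref{cyclic permutation} (giving $f(\phi_d\cdot c)=f(c)+\chi_v(\phi_d)$), transitivity of the $\ci_g$-action, and Theorem \ref{thm:coloring permutation} identifying $\chi_v(\phi_d)=\overline{\tau}(\phi_d)(v)$. Your extra well-definedness check is harmless but redundant, since $\chi_v(\phi_d)=f(d)-f(c)$ depends only on $d$.
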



\section{Arcs on planar surfaces}
\label{sec: arc graphs}

In this section, we obtain analogues of our results on curve graphs for arc graphs.
For simplicity, we specialize to the case of a planar surface $\Sigma = \Sigma_n$.

Let $v \in H_1(\Sigma,\del \Sigma;\bz/2\bz)$ denote a non-zero relative homology class, and let $\ca_v(\Sigma)$ denote the subgraph of $\ca(\Sigma)$ induced on arcs that represent $v$.
Note that $v$ is determined by the pair of (distinct) components of $\del \Sigma$ containing the endpoints of any arc that represents it.
Let $\ca\cn(\Sigma)$ consist of the non-separating arcs.
Every arc in $\ca\cn(\Sigma)$ belongs to one of the $\binom n2$ isomorphic subgraphs $\ca_v(\Sigma)$.
Similarly, for a hole $\del$ in $\Sigma$, let $\ca_\del(\Sigma)$ denote the subgraph of $\ca(\Sigma)$ induced on arcs with both endpoints on $\del$.
Let $\ca\cs(\Sigma)$ consist of the separating arcs.
Every arc in $\ca\cs(\Sigma)$ belongs to one of the $n$ isomorphic subgraphs $\ca_\del(\Sigma)$.

By analogy to Lemma \ref{lem:planar-upper} and with a slight adjustment to its proof, we have:

\begin{Lem}
\label{lem:arcs-upper}
There exists a homomorphism $f: \ca\cs(\Sigma_n) \to \kg(n)$.
\end{Lem}

\begin{proof}
Given an arc $a \in \ca\cs(\Sigma_n)$, let $\del$ denote the hole that contains its endpoints.
The surface $\Sigma_n \smallsetminus a$ consists of two components, each of which contains at least one hole of $\Sigma_n$.  
We obtain an induced partition of $\del \Sigma_n \smallsetminus \del$ into two non-trivial parts, and we let $f(a)$ be the partition of $\del \Sigma_n$ obtained by adding $\del$ to the larger part, breaking ties arbitrarily.
If $(a,b) \in \ca\cs(\Sigma_n)$, then there are two cases to check that $f(a,b) \in \kg(n)$, depending on whether $a$ and $b$ have endpoints on the same hole or not.
We leave the routine verification to the reader.
It follows that $f$ defines the desired homomorphism.
\end{proof}

Similarly, by analogy to Lemma \ref{lem:planar-lower}, we have:

\begin{Lem}
\label{lem:arcs-lower}
There exist embeddings $a_0 : \cg(n-1) \hookrightarrow \ca_\del (\Sigma_n)$ and $a_1 : \cg(n) \smallsetminus \cg(n,1) \hookrightarrow \ca \cn (\Sigma_n)$.
\end{Lem}

\begin{proof}
For the first part, realize $\Sigma_n$ as a round disk $D$ with $n-1$ evenly spaced small holes $\del_1,\dots,\del_{n-1}$ near $\del D$.
For $1 \le i \le j \le n-1$, let $a_0(i,j)$ denote a chord of $D$ disjoint from these holes and that partitions them into two parts, one of which consists of $\del_i,\dots,\del_{j-1}$.
Note that the isotopy type of $a_0(i,j)$ is well-defined.
Furthermore, $a_0(i,j)$ and $a_0(i',j')$ can be disjoined if and only if $(i,j)$ and $(i',j')$ are unlinked.
It follows that $a_0$ defines the desired embedding.

For the second part, realize $\Sigma_n$ as a sphere $\br^2 \cup \{ \infty \}$ with $n$ evenly spaced small holes $\del_1,\dots,\del_n$ around a circle.
For $1 \le i < j \le n$, let $a_1(i,j)$ denote a line segment with endpoints on $\del_i$ and $\del_j$.
As in the first part, it follows that $a_1$ defines the desired embedding.
\end{proof}

\begin{Thm}
\label{thm: separating arcs}
$\chi(\ca\cs(\Sigma_n)) = \Theta(n \log n)$.
\end{Thm}

\begin{proof}
The result follows from the sequence of homomorphisms 
\[
\cg(n-1) \overset {a_0} \into A_\del(\Sigma_n) \into \ca\cs(\Sigma_n) \overset f \to \kg(n)
\]
provided by Lemmas \ref{lem:arcs-upper} and \ref{lem:arcs-lower}, as well as one more application of Theorem \ref{thm: kneser chromatic intro} and the monotonicity of chromatic numbers.
\end{proof}

Doubling the surface $\Sigma_n$ along its boundary results in a closed surface $S_{n-1}$.
Every arc $a \in \ca(\Sigma_n)$ gets doubled to a curve $d(a) \in \cc(S_{n-1})$, and every class $v$ gets doubled to a non-zero class $w \in H_1(S_{n-1};\bz)$, defined up to sign.
The map $d: \ca(\Sigma_n) \to \cc(S_{n-1})$ defines an embedding, and it restricts to an embedding $\ca_v(\Sigma_n) \into \cc_w(S_{n-1})$.

\begin{Thm}
\label{thm: arcs unique}
$\chi(\ca_v(\Sigma_n)) = \omega(\ca_v(\Sigma_n)) = n-2$, and $\ca_v(\Sigma_n)$ is uniquely $(n-2)$-colorable.
\end{Thm}

\begin{proof}

\begin{figure}
	\centering
	\begin{minipage}{.45\textwidth}
	\centering
	\includegraphics[width=6cm]{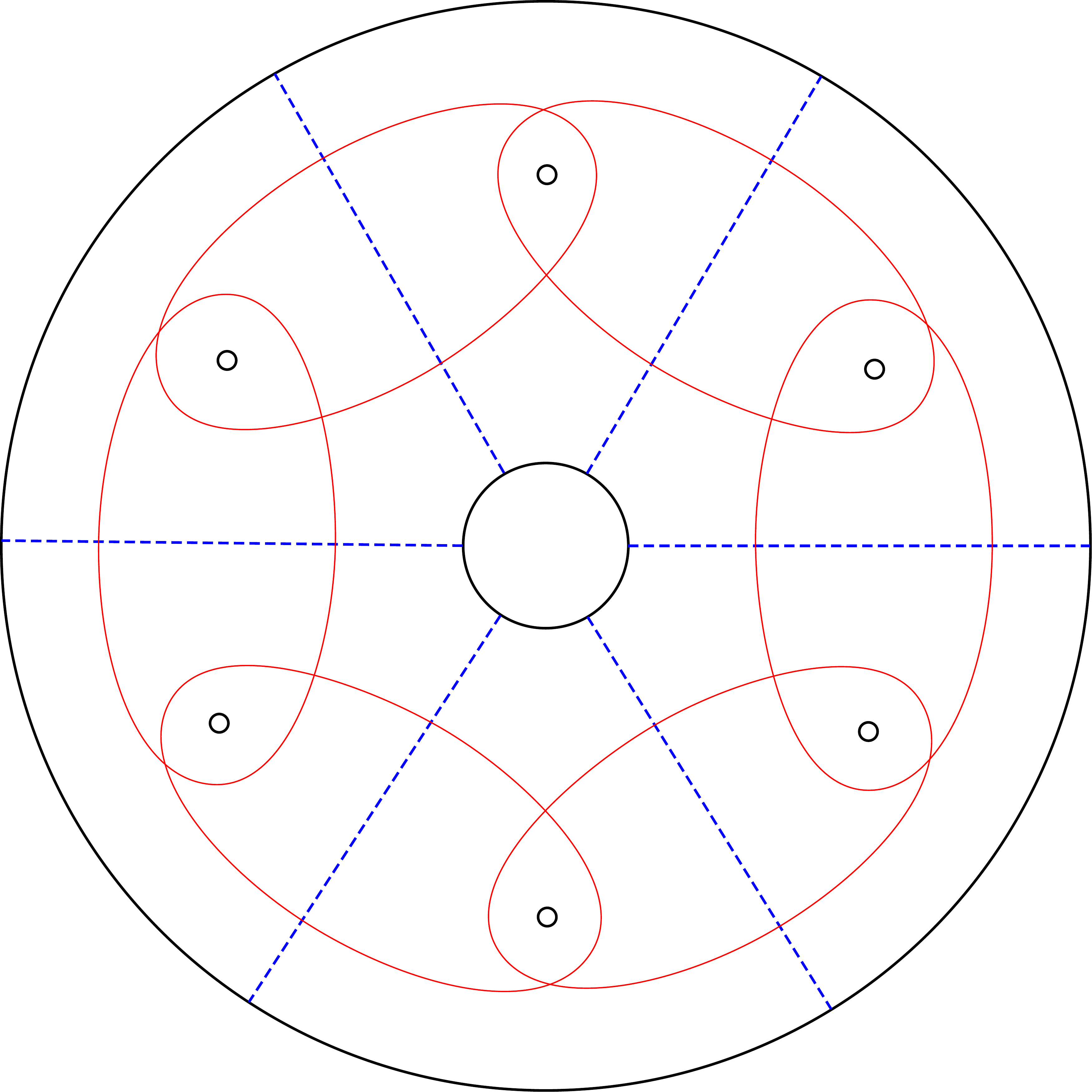}
	\subcaption{The dashed blue arcs form a maximal clique $c_0\in\ck(\ca_v(\Sigma_n))$.}
	\label{unique arcs 1 pic}
	\end{minipage}\hfill
	\begin{minipage}{.45\textwidth}
	\centering
	\includegraphics[width=6cm]{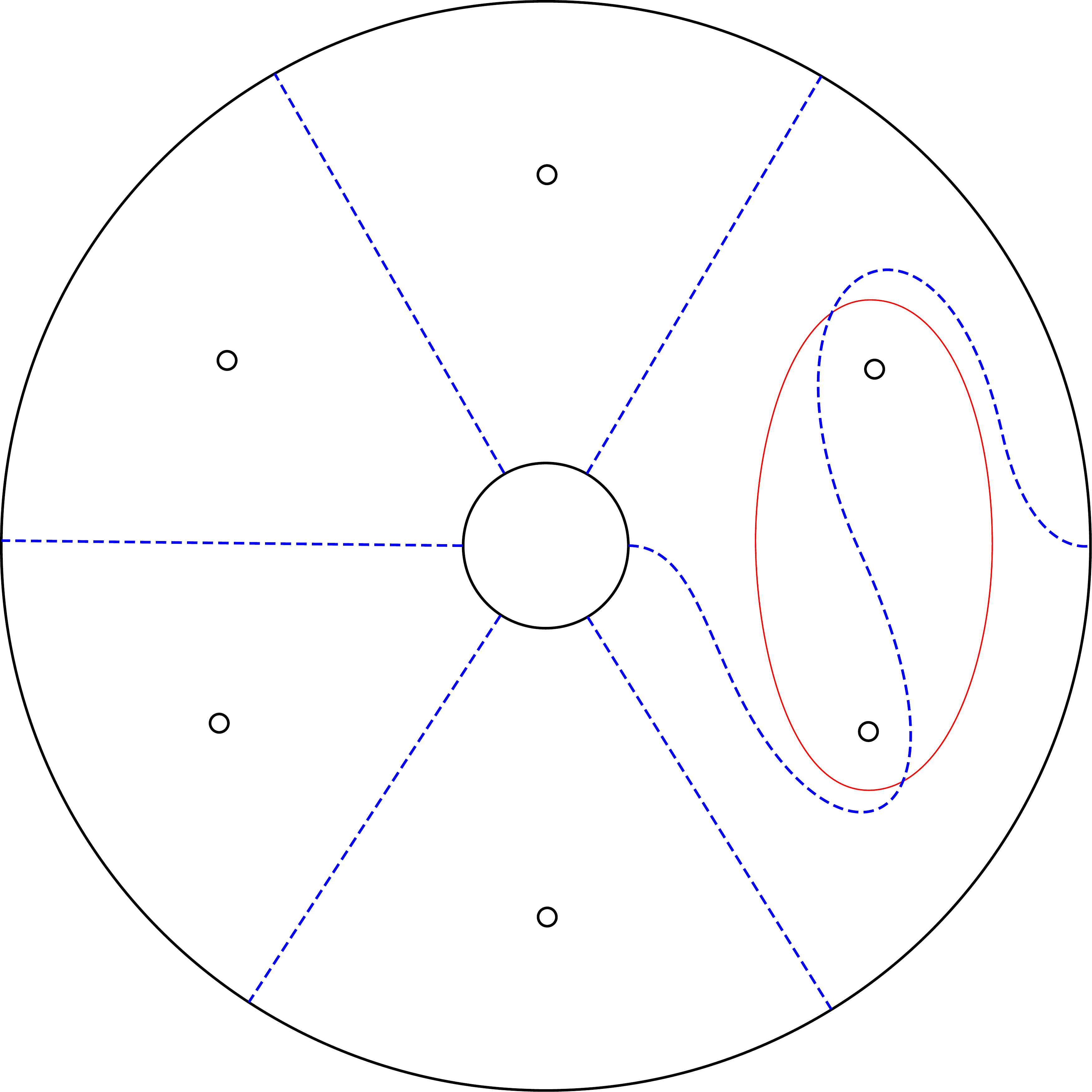}
	\subcaption{The action of a half Dehn twist on $c_0$.}
	\label{unique arcs 2 pic}
	\end{minipage}
	\caption{Half Dehn twists about the red curves on the left together with a rotation by one click generate the annular braid group.}
	\label{Unique arc pics} 
\end{figure}

The embedding $\ca_v(\Sigma_n) \into \cc_{w}(S_{n-1})$ and Theorem \ref{thm: C_v uniquely colorable} show that $\ca_v(\Sigma_n)$ is $(n-2)$-colorable.
The proof of uniqueness follows the paradigm of \S \ref{subsec: uniqueness} once more.
We realize $\Sigma_n$ as an annulus with $n-2$ equally spaced small holes.
Let $\del_i$ and $\del_j$ denote the boundary components of the annulus and let $v$ denote the class of arc with one endpoint on each of $\del_i$ and $\del_j$.
The subgroup $P_{ij}<\Mod(\Sigma_n)$ that fixes each of $\del_i$ and $\del_j$ setwise is the {\em annular braid group} on $n-2$ strands. 
It is generated by the $n-2$ swaps of consecutive holes by half Dehn twists and the rotation by one click \cite[Thm.~1]{Kentannularbraids}.
Choosing the clique $c_0\in\ck(\ca_v(\Sigma_n))$ given by $n-2$ radial segments, we see that the image of $c_0$ under each generator lies in the same connected component of $\ck(\ca_v(\Sigma_n))$ as $c_0$.
See Figure \ref{Unique arc pics}.
As $P_{ij}$ acts transitively on vertices of $\ck(\ca_v(\Sigma_n))$, Putman's technique shows that $\ck(\ca_v(\Sigma_n))$ is connected.
Lastly, the complex $\ca_v(\Sigma_n)$ is pure of dimension $n-3$, which also implies that $\omega(\Sigma_n)=n-2$.
Lemma \ref{lem: unique} closes the proof.
\end{proof}

\begin{proof}
[Proof of Theorem \ref{thm: planar arc graph intro}]
The arc graph $\ca(\Sigma_n)$ is the vertex-disjoint union of $\ca\cs(\Sigma_n)$ and the $\binom n2$ subgraphs $\ca_v(\Sigma_n)$.
The result now follows from the union bound and Theorems \ref{thm: separating arcs} and \ref{thm: arcs unique}.
\end{proof}

\begin{Rem}
The order of growth of the upper bound in Theorem \ref{thm: planar arc graph intro} is dominated by the nonseparating arcs, so it is tempting to explore more judicious colorings of nonseparating arcs of different types. 
Proposition \ref{prop:four-holed nosep} and Theorem \ref{thm: chi(a)=4} in the next \S take first steps in this direction.
\end{Rem}


\section{The four-holed sphere}
\label{sec: four-holed}

In this section, we study the arc graph of the four-holed sphere $\Sigma_4$ and obtain some exact results.
Although we cannot pin down the exact value of $\chi(\ca(\Sigma_4))$ (Theorem \ref{thm:four-holed arcs}), we determine the chromatic number of the subgraphs of $\ca(\Sigma_4)$ induced on separating arcs (Proposition \ref{prop:four-holed sep}), non-separating arcs (Proposition \ref{prop:four-holed nosep}), and non-separating arcs with one endpoint on a fixed hole (Theorem \ref{thm: chi(a)=4}).

\begin{Lem}
\label{lem: frac num four-holed}
$\chi_f(\ca(\Sigma_4)) \ge 22/3$.
\end{Lem}

\begin{proof}
Identify $\Sigma_4$ with the 2-skeleton of a regular tetrahedron in $\br^3$ with holes placed at its vertices. 
Let $V_1$ denote the arcs of $\Sigma_4$ determined by the edges of the tetrahedron. 
There are six reflections of $\Sigma_4$, and each has a fixed point set consisting of two arcs of $\Sigma_4$. 
From this pair, one arc is in $V_1$; let $V_2$ consist of the six other arcs of $\Sigma_4$ determined by these six reflections. Let $V_3$ consist of the twelve (isotopy classes of) arcs of $\Sigma_4$ determined by the intersection of $\Sigma_4$ with a plane passing through exactly one vertex of the tetrahedron. 
Finally, consider the subgraph $G$ induced on the vertex set $V=V_1 \cup V_2 \cup V_3$. See Figure \ref{tetra pics}.

\begin{figure}[h]
	\centering
	\begin{minipage}{.3\textwidth}
	\centering
	\includegraphics[width=4cm]{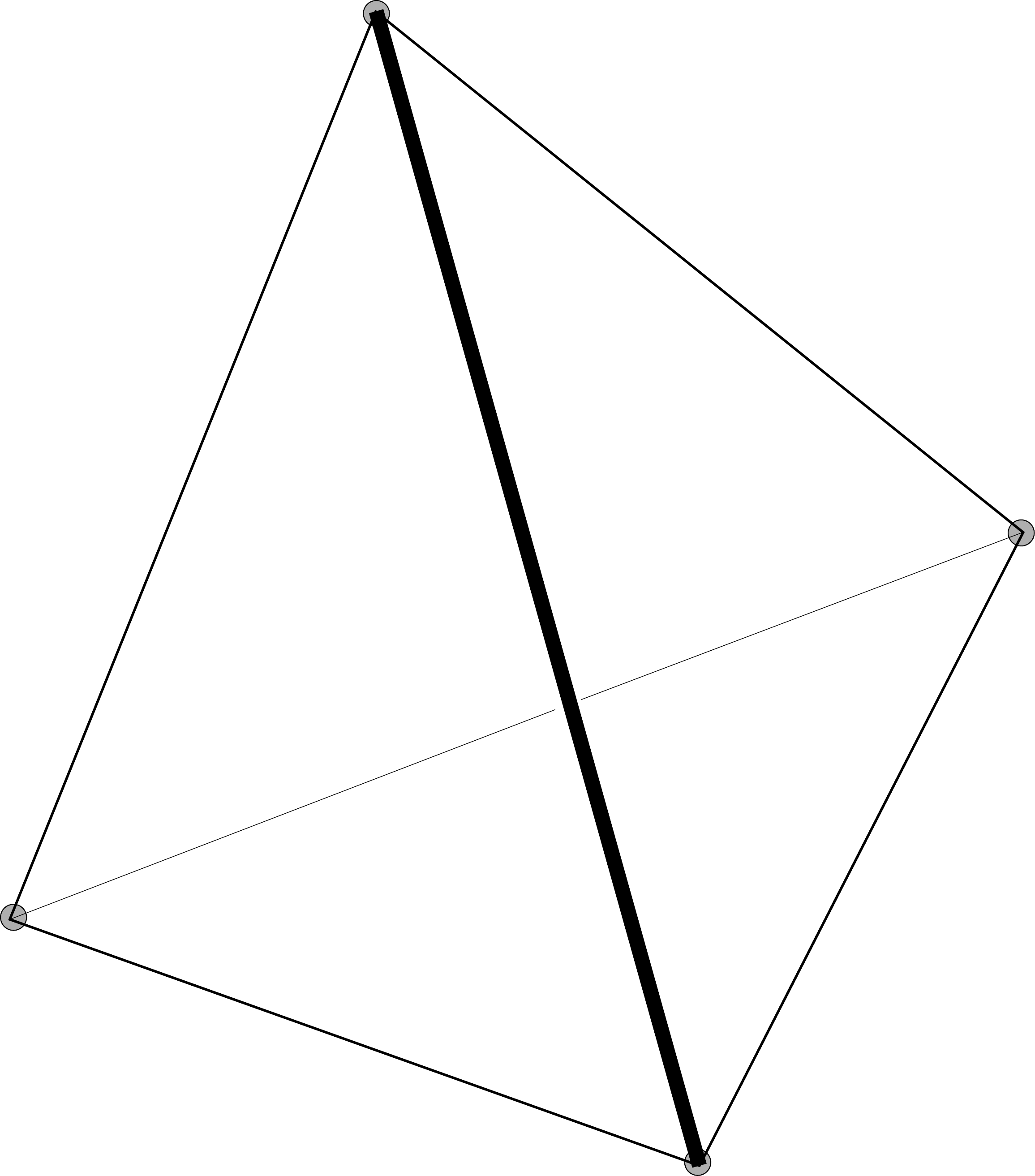}
	\subcaption{An arc in $V_1$.}
	\label{tetra pic 1}
	\end{minipage} \hfill
	\begin{minipage}{.3\textwidth}
	\centering
	\includegraphics[width=4cm]{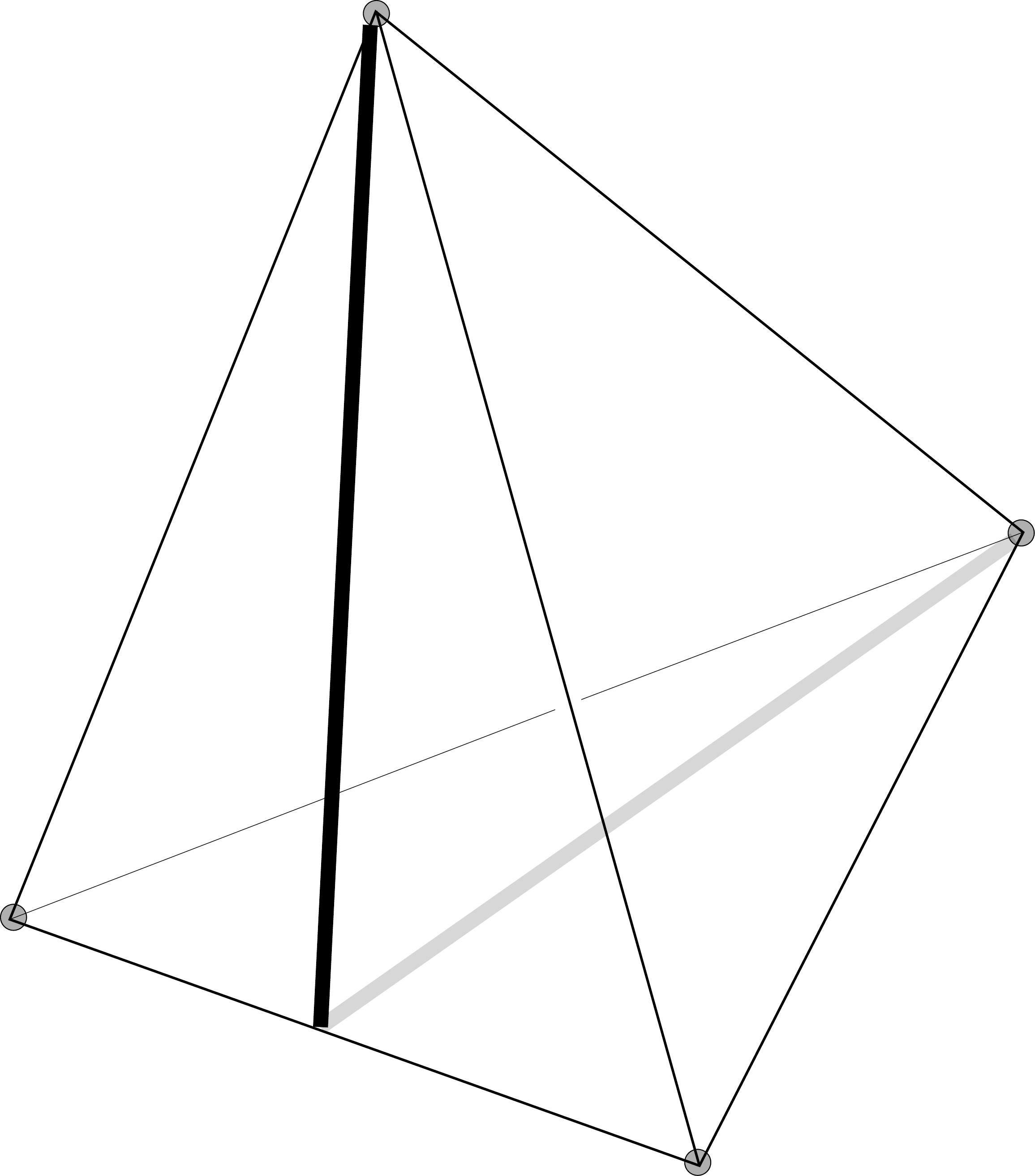}
	\subcaption{An arc in $V_2$.}
	\label{tetra pic 2}
	\end{minipage} \hfill
	\begin{minipage}{.3\textwidth}
	\centering
	\includegraphics[width=4cm]{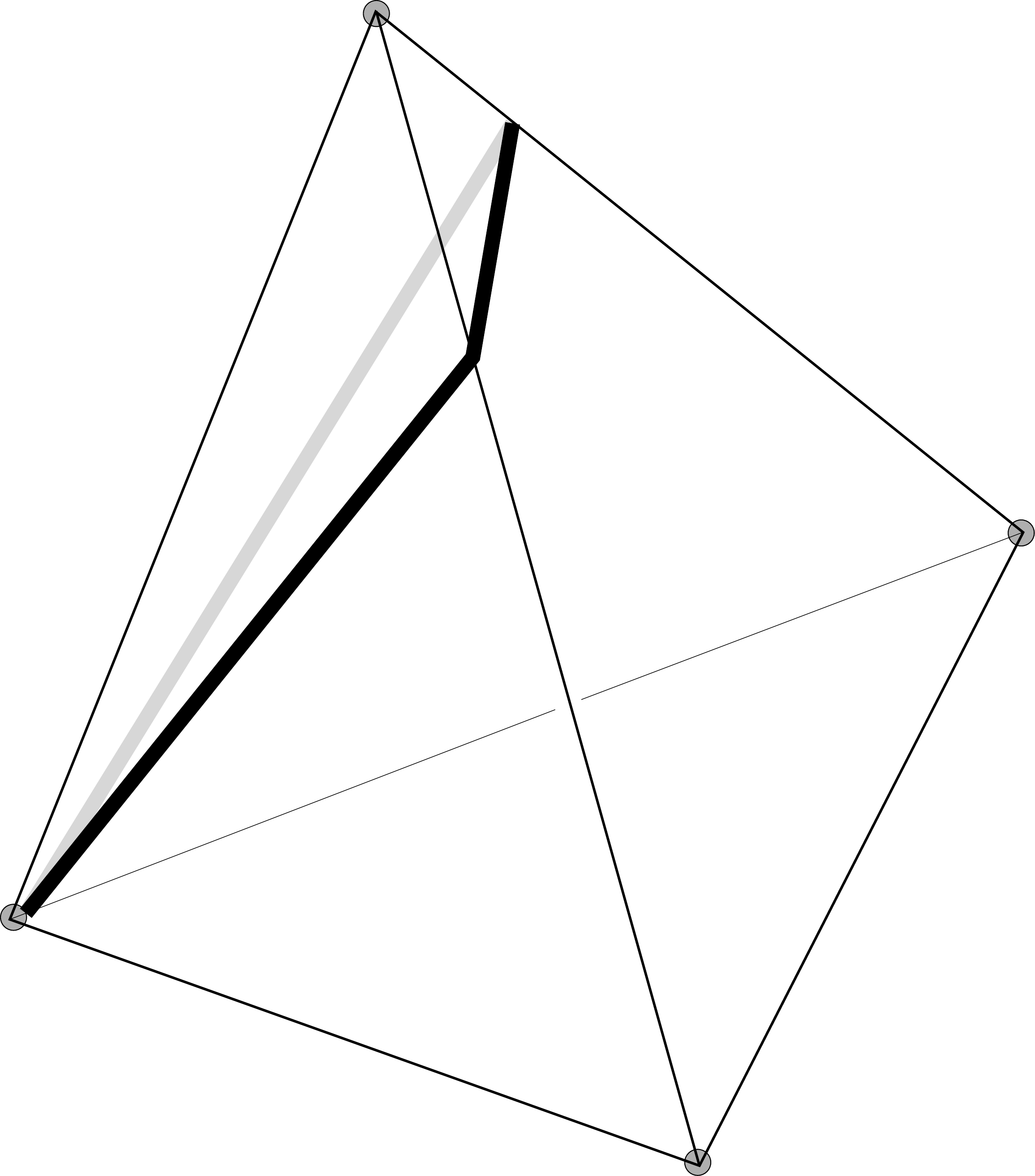}
	\subcaption{An arc in $V_3$.}
	\label{tetra pic 2}
	\end{minipage}
	\caption{A schematic of the arcs $V \subset \ca(\Sigma_4)$.}
	\label{tetra pics}
\end{figure}

Let $I$ be a maximal independent set of $G$, and let $a_i = |I\cap V_i|$ for $i=1,2,3$.
It is straightforward to check that $(a_1,a_2,a_3) \in \{(1,1,0), (1,0,2), (0,1,4), (0,2,2), (0,3,3) \}$.
A function $f:V \to \mathbb{R}_{\ge 0}$ satisfying $f|_{V_i} = c_i$, for $i=1,2,3$, is a fractional clique if $(c_1,c_2,c_3) \cdot (a_1,a_2,a_3) \le 1$ for all $(a_1,a_2,a_3)$ as above.
Thus, taking $c_1=7/9$, $c_2=2/9$, $c_3=1/9$ yields a fractional clique of total value $\sum c_i |V_i| =6c_1 + 6c_2 +12c_3=22/3$, and the lemma follows.
\end{proof}

\begin{Prop}
\label{prop:four-holed sep}
$\chi(\ca\cs(\Sigma_4))=\omega(\ca\cs(\Sigma_4))=3$.
\end{Prop}

\begin{proof}
Mark one of the holes.
One of the components in the complement of a separating arc $a$ contains a single hole of $\Sigma_4$.
If it is not marked, let $c(a)$ denote this hole; if it is, let $c(a)$ denote the hole containing the endpoints of $a$.
The reader may check that $c$ defines a proper 3-coloring, and it is easy to locate a clique of size 3.
\end{proof}

\begin{Prop}
\label{prop:four-holed nosep}
$\chi(\ca\cn(\Sigma_4))=\omega(\ca\cn(\Sigma_4))=6$.
\end{Prop}

\begin{proof}
Write $\del \Sigma_4 = \del_1 \cup \del_2 \cup \del_3 \cup \del_4$.
Let $\ca_{ij}(\Sigma_4)$ denote the subgraph induced on arcs with one endpoint on $\del_i$ and the other on $\del_j$.
Note that an arc in $\ca_{12}(\Sigma_4)$ is disjoint from a unique arc in $\ca_{34}(\Sigma_4)$.
Fix such a pair $(a_{12},a_{34})$.
Using Theorem \ref{thm: arcs unique}, 2-color both of $\ca_{12}(\Sigma_4)$ and $\ca_{34}(\Sigma_4)$ so that $a_{12}$ and $a_{34}$ get opposite colors.
One checks that any arc in $\ca_{12}(\Sigma_4)$ gets the opposite color from the unique arc that it is disjoint from in $\ca_{34}(\Sigma_4)$.
Thus, we obtain a proper 2-coloring of the subgraph induced on $\ca_{12}(\Sigma_4) \cup \ca_{34}(\Sigma_4)$.
Copy this coloring onto the other two subgraphs corresponding to the partitions of the holes into two pairs, using a different palette for each.
The result is the desired 6-coloring.
The arcs in $V_1$ described in the proof of Lemma \ref{lem: frac num four-holed} form a clique of size 6.
\end{proof}

\begin{Thm}
\label{thm:four-holed arcs}
$8 \le \chi(\ca(\Sigma_4)) \le 9$.
\end{Thm}

\begin{proof}
Immediate from Lemma \ref{lem: frac num four-holed} and Propositions \ref{prop:four-holed sep} and \ref{prop:four-holed nosep}.
\end{proof}

As a first step towards improving the upper bound in Theorem \ref{thm: planar arc graph intro}, it is possible to color a subcomplex of the nonseparating arcs with fewer colors than those used above. Let $\ca' \subset \ca(\Sigma_4)$ denote the subcomplex induced on arcs with exactly one endpoint on a fixed hole $\del_i$, and note that six colors are used to color $\ca'$ in the coloring of Proposition \ref{prop:four-holed nosep}.
We demonstrate in Theorem \ref{thm: chi(a)=4} that four suffice.

We begin by relating $\ca'$ to $\cc(\Sigma_4)$.
By definition, a simplex in $\cc(\Sigma_4)$ consists of curves with pairwise minimal intersection number 2.
There exists a well-known isomorphism between the complex $\cc(\Sigma_4)$ and the Farey complex $\cf$ \cite[p.~94-95]{Farbprimer}.
The vertex set of $\cf$ is $P^1(\bz^2)$, the set of lines in $\bz^2$.
Given a pair of lines $L_1,L_2 \in P^1(\bz^2)$, let $d(L_1,L_2)$ denote the index in $\bz^2$ of the subgroup generated by their elements.
The edge set of $\cf$ consists of all pairs $(L_1,L_2)$ satisfying $d(L_1,L_2)=1$.
The group $\mathrm{PSL}(2,\bz)$ acts on $P^1(\bz^2)$ and by extension on $\cf$.

Form the pure, 3-dimensional supercomplex $\cf' \supset \cf$ whose 3-simplices are the unions of the 2-simplices in $\cf$ that share a common edge.
Given $(L_1,L_2) \in E(\cf)$, with $L_i$ generated by $x_i \in \bz^2$, there exist two maximal simplices in $\cf$ containing $(L_1,L_2)$, and they take the form $(L_1,L_2,L^\pm)$, where $L^\pm$ is generated by $x_1 \pm x_2$.
It follows that $d(L^+,L^-)=2$.
Thus, for every edge $(L,L') \in E(\cf')$, we have $d(L,L') \in \{1,2\}$.
Observe that $\mathrm{PSL}(2,\bz)$ extends to an action on $\cf'$.

\begin{Prop}
$\ca' \approx \cf'$.
\end{Prop}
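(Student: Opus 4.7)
The plan is to exhibit an explicit simplicial isomorphism via the hyperelliptic branched double cover $\pi\co T^2 \to \Sigma_4$, realized as the quotient of $T^2 = \br^2/\bz^2$ by the elliptic involution $\iota(x,y) = (-x,-y)$.  By symmetry we may assume $i = 1$ and choose coordinates so that $p_1$ is the image of $(0,0)$, while $p_2, p_3, p_4$ are the images of $(1/2,0), (0,1/2), (1/2,1/2)$ respectively.

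The first step is to define the vertex map $\phi\co V(\ca_1) \to V(\cf) = P^1(\bz^2)$.  An arc $a \in \ca_1$ lifts under $\pi$ to a single $\iota$-invariant simple closed curve on $T^2$ passing through $(0,0)$ and exactly one other half-lattice point; its unique geodesic representative is a straight line of primitive slope, which we take to be $\phi(a)$.  Conversely, an elementary parity calculation shows that for any primitive $(p,q) \in \bz^2$, the straight geodesic on $T^2$ of slope $(p,q)$ through the origin passes through exactly one other half-lattice point -- namely $(1/2,1/2)$, $(1/2,0)$, or $(0,1/2)$ according to whether $(p,q)$ is odd--odd, odd--even, or even--odd -- whose projection is therefore an arc in $\ca_1$.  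This shows $\phi$ is a bijection on vertices.

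Next I would verify that $\phi$ respects adjacency.  Two closed geodesics on $T^2$ with primitive slopes $s_1, s_2$ meet in $d := |\det(s_1,s_2)|$ points, forming a coset of a sublattice in $\br^2/\bz^2$; these meetings occur exclusively at half-lattice points precisely when $d \in \{1,2\}$.  For $d = 1$ the single intersection is $(0,0)$ and the arcs $a, b$ share only the endpoint $p_1$; for $d = 2$ the two intersections are $(0,0)$ together with the unique half-lattice point lying on both geodesics, forcing $a$ and $b$ to share both endpoints.  When $d \ge 3$, some intersection lies outside the half-lattice and the projected arcs meet in their interiors.  Hence disjointness of $a, b$ in $\ca_1$ is equivalent to $d(\phi(a), \phi(b)) \in \{1,2\}$, which matches exactly the edge relation defining $\cf'$.

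To upgrade the edge-preserving bijection to a full simplicial isomorphism, it suffices to show that both $\ca_1$ and $\cf'$ are flag complexes with the same maximal cliques.  This is automatic for $\ca_1$ from the definition of the arc graph.  For $\cf'$, a short case analysis using parity of primitive vectors rules out the existence of $5$-cliques and shows that every $4$-clique contains exactly one pair of vertices at distance $2$: two such pairs would force some third pair at distance at least $3$ by evenness of the relevant determinants.  Consequently, the $4$-cliques in $\cf'$ are precisely the $3$-simplices introduced in its definition, and $\phi$ is the desired isomorphism.  I expect this final bookkeeping -- matching the maximal simplices on both sides -- to be the most delicate step, although it reduces in the end to the same determinantal parity analysis already used to identify the edges.
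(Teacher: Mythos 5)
Your proposal is correct in outline, but it takes a genuinely different route from the paper. The paper never touches the torus cover directly: it uses the duality that an arc in $\ca_i$ is disjoint from a unique curve in $\cc(\Sigma_4)$ (and vice versa) to transport the cited classical isomorphism $\cc(\Sigma_4)\approx\cf$ into $\ca_i$, and then identifies the extra 3-simplices by the purely topological observation that a maximal simplex of $\ca_i$ consists of four pairwise disjoint arcs of which exactly two share an endpoint pair, so that every maximal simplex is a pair of Farey triangles glued along an edge. Your argument instead re-derives the Farey identification from scratch at the level of arcs, via the branched double cover by the torus and straight-line (slope) representatives, and then replaces the paper's topological description of maximal simplices by an arithmetic clique analysis in the graph with edge relation $d(L,L')\in\{1,2\}$: your parity-of-determinant claims do check out (a triangle with all three pairs at $d=2$ forces a determinant divisible by $4$, and a $4$-clique with two disjoint $d=2$ pairs forces a forbidden distance), so the maximal cliques are exactly the added $3$-simplices. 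What your route buys is an explicit coordinate model -- the slope map makes the later $\bmod\ 3$ reduction and the $\Gamma(3)$ statement completely concrete -- at the cost of more bookkeeping; what the paper's route buys is brevity and a cleaner conceptual reason for the $3$-simplices (two parallel arcs of the same type), with the classical $\cc(\Sigma_4)\approx\cf$ taken as input. To make your plan airtight you would still need to record a few standard facts you currently assert: that the slope map is well defined and injective on isotopy classes (straightening to flat geodesics), that for $d\ge 3$ the interior intersections cannot be removed by isotopy (geodesic representatives minimize intersection, or lift a putative disjoint pair and bound the determinant by the number of shared branch points), that conversely every pair of lines with $d=2$ is an edge of $\cf'$ (take the lines spanned by $\frac{1}{2}(x+x')$ and $\frac{1}{2}(x-x')$), and the triangle case in the flagness check, which needs the same half-sum trick.
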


\begin{proof}
An arc in $\ca'$ is disjoint from a unique curve in $\cc(\Sigma_4)$, and vice versa.
In this way, we obtain a natural bijection between the vertex sets of these complexes.
Moreover, disjoint arcs in $\ca'$ on different endpoint pairs correspond to curves in $\cc(\Sigma_4)$ with minimal geometric intersection number 2, and vice versa.
It follows that $\cf \approx \cc(\Sigma_4)$ naturally embeds as a subcomplex of $\ca'$.
The 2-simplices of $\ca'$ contained in $\cf$ are spanned by triples of pairwise disjoint arcs in $\ca'$ on different pairs of endpoints.
Given two such 2-simplices that share a common edge, the pair of vertices in these simplices not on the edge are disjoint arcs in $\ca'$ with the same pair of endpoints.
Therefore, the 4 vertices in the union of these 2-simplices span a 3-simplex in $\ca'$.
It follows that $\cf' \subset \ca'$.
Conversely, a maximal simplex $\sigma \subset \ca'$ consists of 4 pairwise disjoint arcs, precisely two of which have the same endpoint pair.
Thus, it contains two 2-simplices in $\cf$ that abut along an edge, so $\sigma \subset \cf'$.
It follows that $\ca' \subset \cf'$, and the proof is complete.
 \end{proof}

Recall that for a positive integer $n$, the congruence subgroup $\Gamma(n) \subset \mathrm{PSL}(2,\bz)$ is the kernel of the natural epimorphism $\mathrm{PSL}(2,\bz) \to \mathrm{PSL}(2,\bz/n\bz)$ obtained by reducing$\pmod n$.

\begin{Thm}
\label{thm: chi(a)=4}
$\chi(\ca')=\chi(\cf')=4$.  Moreover, the $\Gamma(3)$-orbits under the action by $\mathrm{PSL}(2,\bz)$ on $P^1(\bz^2)$ comprise the color classes in a proper 4-coloring of $\cf'$.
\end{Thm}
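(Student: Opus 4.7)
The plan is to leverage the preceding isomorphism $\ca_i \cong \cf'$, so it suffices to prove $\chi(\cf') = 4$. For the lower bound, observe that $\cf'$ was defined to contain 3-simplices (the four vertices $L_1, L_2, L^+, L^-$ associated to each edge $(L_1,L_2)$ of $\cf$), which yield 4-cliques; hence $\chi(\cf') \geq 4$. The bulk of the argument is the upper bound, for which I would exhibit the 4-coloring by $\Gamma(3)$-orbits directly.

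First I would identify the $\Gamma(3)$-orbits on $P^1(\bz^2)$ with the fibers of the natural reduction map $\rho \co P^1(\bz^2) \to P^1((\bz/3\bz)^2)$. Since any primitive $x \in \bz^2$ has a coordinate coprime to 3, its mod-3 reduction is nonzero, so $\rho$ is well-defined; it is surjective because every nonzero vector in $(\bz/3\bz)^2$ lifts to a primitive integer vector. The field $\bz/3\bz$ has exactly $(9-1)/2 = 4$ lines through the origin, so $|P^1((\bz/3\bz)^2)| = 4$. Since $\mathrm{PSL}(2,\bz/3\bz)$ acts transitively on $P^1((\bz/3\bz)^2)$, the orbits of $\Gamma(3) = \ker(\mathrm{PSL}(2,\bz) \to \mathrm{PSL}(2,\bz/3\bz))$ on $P^1(\bz^2)$ are precisely the fibers of $\rho$, and there are four of them.

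Next I would show that this partition into four classes is a proper coloring of $\cf'$. The key observation is the following divisibility computation: suppose $L \neq L'$ satisfy $\rho(L) = \rho(L')$, and pick primitive generators $x,x'$ of $L,L'$. After possibly negating $x'$, we have $x' \equiv x \pmod 3$, so $x' = x + 3y$ for some $y \in \bz^2$. Then
\[
d(L,L') = |\det(x,x')| = |\det(x, x+3y)| = 3|\det(x,y)|,
\]
which is a positive multiple of $3$, hence at least $3$. On the other hand, as noted in the excerpt, every edge $(L,L') \in E(\cf')$ satisfies $d(L,L') \in \{1,2\}$. So no edge of $\cf'$ joins two vertices in the same $\Gamma(3)$-orbit, and the orbits form a proper 4-coloring, completing the proof that $\chi(\cf') = 4$.

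The main (minor) obstacle is simply identifying the right invariant, namely the mod-3 reduction of a primitive generator; once that is in hand, the proper coloring is immediate from the bound $d(L,L') \le 2$ on edges of $\cf'$, and the matching 4-clique coming from the 3-simplices of $\cf'$ supplies the lower bound. No further subtlety is needed, since the equality $\chi(\ca_i) = \chi(\cf')$ follows from the preceding proposition.
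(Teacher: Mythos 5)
Your proposal is correct and follows essentially the same route as the paper: the lower bound from the 3-simplices of $\cf'$, and the upper bound by coloring a line with its mod-3 reduction, which is proper because $d(L,L')\in\{1,2\}$ on edges of $\cf'$ (your computation $d(L,L')=3|\det(x,y)|$ when $\rho(L)=\rho(L')$ is just the contrapositive of the paper's observation that an index coprime to $3$ forces distinct reductions). The one soft spot is the assertion that the $\Gamma(3)$-orbits are \emph{precisely} the fibers of $\rho$: transitivity of $\mathrm{PSL}(2,\bz/3\bz)$ on $P^1((\bz/3\bz)^2)$ only shows each orbit lies in a fiber and that the fibers are permuted transitively, not that $\Gamma(3)$ acts transitively on each fiber; to finish one should invoke transitivity of $\mathrm{PSL}(2,\bz)$ on $P^1(\bz^2)$ together with surjectivity of the line-stabilizer under mod-3 reduction (or argue, as the paper does, by identifying the color-class-preserving subgroup of $\mathrm{PSL}(2,\bz)$ as exactly $\Gamma(3)$).
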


\begin{proof}
Since $\cf'$ is 3-dimensional, $\chi(\cf') \ge 4$ follows at once.
Next, map a line in $P^1(\bz^2)$ to its$\pmod 3$ reduction in $P^1((\bz/3\bz)^2)$.
Observe that $P^1((\bz/3\bz)^2)$ consists of four lines.
Given an edge $(L_1,L_2) \in E(\cf')$, $d(L_1,L_2)$ is relatively prime to 3, so the subgroup generated by the elements of $L_1$ and $L_2$, reduced$\pmod 3$, is all of $(\bz/3\bz)^2$.  In particular, $L_1$ and $L_2$ reduce to distinct lines$\pmod 3$.
Therefore, the$\pmod 3$ reduction map defines a proper 4-coloring of $\cf'$, and $\chi(\cf') = 4$, as desired.

Next, the transitive action by $\mathrm{PSL}(2,\bz)$ on $\cf'$ permutes the color classes under the 4-coloring $f$ just described.
Let $g \in \mathrm{PSL}(2,\bz)$ and select a line $L \in P^1(\bz^2)$.
We have $f(g \cdot L) = \overline{g} \cdot f(L)$, where $\overline{g} \in PSL(2,\bz/3\bz)$ denotes the reduction of $g\pmod3$.
It follows that the subgroup of $\mathrm{PSL}(2,\bz)$ that preserves the color classes consists of those group elements $g$ for which $\overline{g}$ fixes all lines in $(\bz/3\bz)^2$.
Such an element $\overline{g}$ is represented by a diagonal matrix in $\mathrm{SL}(2,\bz/3\bz)$, which forces $\overline{g} = \pm I$.
Therefore, the color-preserving subgroup of $\mathrm{PSL}(2,\bz)$ is precisely $\Gamma(3)$, which completes the proof.
\end{proof}

In the language of this proof, the unique 3-coloring of $\cf$ comes by mapping each line to its$\pmod 2$ reduction; the color classes are the $\Gamma(2)$-orbits.


\section{The genus two surface}
\label{sec: genus 2}

In this section, we study the chromatic number of the curve graph of the closed surface of genus two and prove Theorem \ref{thm: S_2}.
We first consider a pair of finite graphs that we use to obtain the required lower bounds.
We then apply a homomorphism out of $\cn(S_2)$ defined using hyperbolic geometry in order to obtain the required upper bounds.
This homomorphism admits an alternative description in terms of homology, as we show in Proposition \ref{prop: kneser symplectic}.

\begin{figure}
\includegraphics[width=2in]{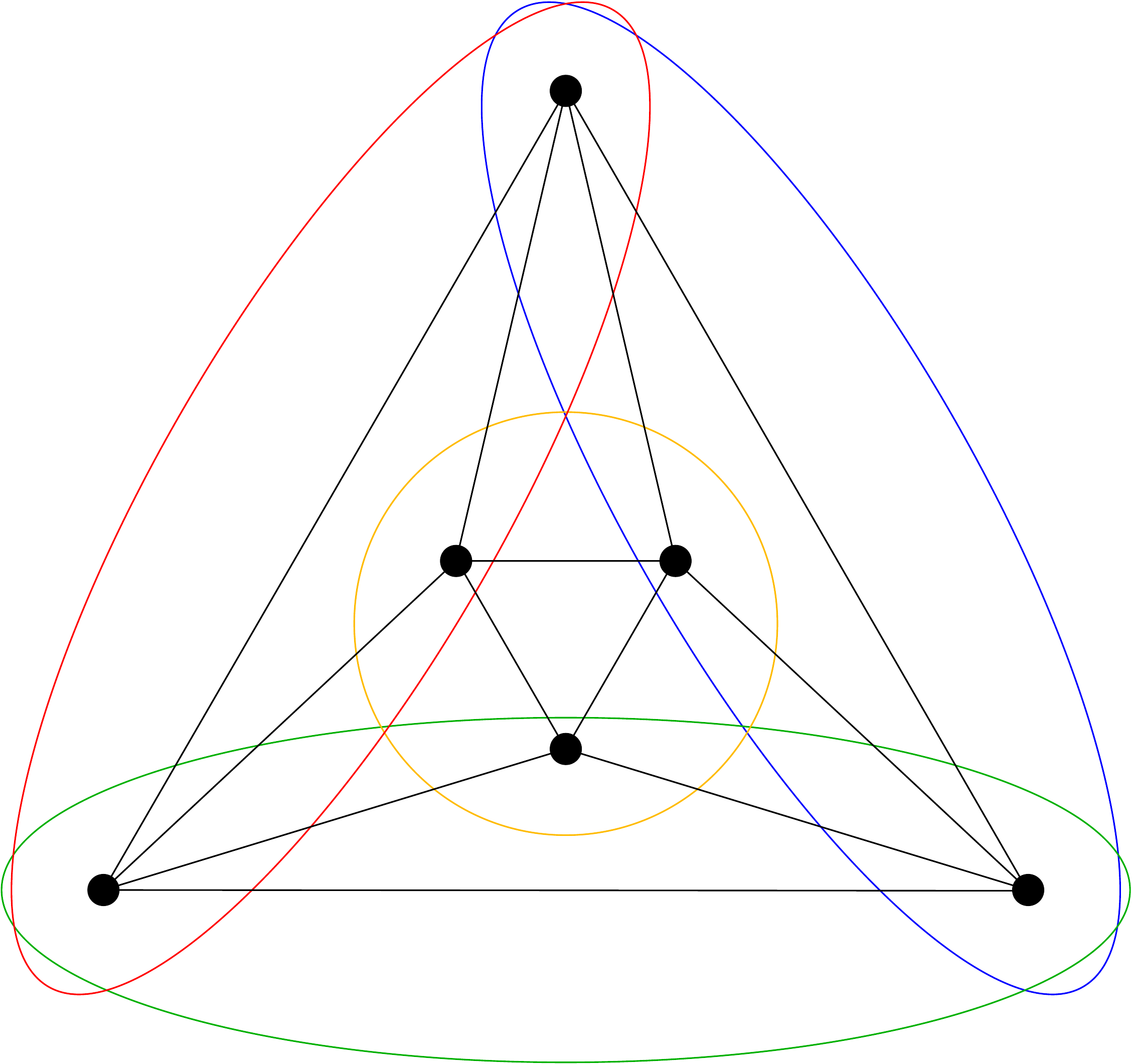}
\caption{The octahedron and four bisecting circles, colored solely for visual aid.}
\label{fig: octahedron}
\end{figure}

Consider the octahedron $O$ and the four circles appearing in Figure \ref{fig: octahedron}.
The four circles separate the four pairs of antipodal faces in $O$.
Let $E$ denote the set of edges of $O$ and $C$ the set of circles.
Let $\cc$ denote the graph on $E \cup C$, where adjacency connotes disjointness, and let $\cn$ denote the subgraph of $\cc$ induced on $E$.
Thus, $\cn$ is the complement of the line graph of $O$.

\begin{Prop}
\label{prop: octahedron}
$\chi(\cn) = 4$ and $\chi(\cc) = 5$.
\end{Prop}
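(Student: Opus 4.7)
Both upper bounds are short. For $\chi(\cn)\le 4$, fix an antipodal pair of vertices $\{v,-v\}$ of $O$ and color the 4 edges at $v$ with one color and the 4 edges at $-v$ with a second; the remaining 4 edges form a 4-cycle on the other four vertices, which I split into two intersecting pairs at a pair of antipodal vertices, colored with two further colors. Each color class lies inside a vertex star and so is an independent set of $\cn$. For $\chi(\cc)\le 5$, any two distinct bisecting circles cross---their defining central planes meet $S^2$ in two points---so the four circles are pairwise non-adjacent in $\cc$ and may share a single fifth color, extending any 4-coloring of $\cn$ to a proper 5-coloring of $\cc$.

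For $\chi(\cn)\ge 4$, note that the maximum intersecting family of edges of $O$ has size 4 (a vertex star; triangular faces contain only 3 edges). A 3-coloring of $\cn$ would partition the 12 edges into three intersecting families, each of size at most 4, forcing sizes $(4,4,4)$ and thus three disjoint vertex stars at three pairwise non-adjacent vertices. This is impossible because $O=K_{2,2,2}$ has independence number 2.

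The core of the proof is $\chi(\cc)\ge 5$. For each circle $C$ corresponding to a pair $(F,F^*)$ of opposite faces, the six edges adjacent to $C$ in $\cc$ are precisely the edges of $F\cup F^*$; since $F$ and $F^*$ are vertex-disjoint, these six edges form a $K_{3,3}$ in $\cn$, and in any proper coloring the color set on $F$ is disjoint from that on $F^*$. Letting $\nu(F)$ denote the number of distinct colors on the edges of $F$, a proper 4-coloring of $\cc$ would force $\nu(F)+\nu(F^*)\le 3$ at each of the four opposite-face pairs (so $C$ itself has an available fourth color), whence $\sum_F \nu(F)\le 12$. The plan is to derive a contradiction by showing $\sum_F \nu(F)\ge 15$ in every proper 4-coloring of $\cn$.

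The main obstacle is this uniform lower bound. Writing $\sum_F\nu(F)=\sum_c f_c$, where $f_c$ is the number of faces meeting the color class $X_c$, I will verify that a size-$k$ intersecting family of edges in $O$ satisfies $f_c=4$ for $k\ge 3$, while $f_c\in\{3,4\}$ for $k=2$, with $f_c=3$ precisely when the two edges share a face. Since the size profile of any 4-coloring of $\cn$ is one of $(3,3,3,3)$, $(4,3,3,2)$, or $(4,4,2,2)$ (other partitions being ruled out directly), the first two profiles immediately yield $\sum\ge 15$. The delicate case is $(4,4,2,2)$: the two size-4 stars are forced onto antipodal vertices $\pm v$, and each size-2 class consists of two edges at a common vertex whose other endpoints form an antipodal pair of $O$; such a pair cannot lie in a common face, so $f_c=4$ for both size-2 classes and $\sum=16$. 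Combining the three cases, $\sum_F\nu(F)\ge 15 > 12$ always, yielding the required contradiction and hence $\chi(\cc)=5$.
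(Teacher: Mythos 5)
Your proof is correct, and for the crucial inequality $\chi(\cc)\ge 5$ it takes a genuinely different route from the paper's. The paper classifies the maximum independent sets of $\cc$: using that each circle meets $6$ edges any three of which contain a disjoint pair, that two circles meet only a disjoint pair of edges, and that no edge meets three circles, it shows that independent sets have size at most $4$, with equality only for a $4$-edge star or the set of $4$ circles; since $|V(\cc)|=16$, a proper $4$-coloring would have all classes of size exactly $4$, one class would be forced to be the $4$ circles, and the remaining classes would give a $3$-coloring of $\cn$, contradicting $\chi(\cn)\ge 4$. You instead double count colors over the $8$ faces: each circle is disjoint from exactly the $K_{3,3}$ of edges of its opposite-face pair, forcing $\nu(F)+\nu(F^*)\le 3$ and hence $\sum_F\nu(F)\le 12$ in any proper $4$-coloring of $\cc$, while your $f_c$-analysis of intersecting edge families (which I checked: $f_c=2,3$ or $4$ for singletons, face-sharing pairs, and everything else, respectively) gives $\sum_c f_c\ge 15$. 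Note that any total exceeding $12$ suffices, so the profile $(4,4,3,1)$ that you dismiss as ``ruled out directly'' --- correctly, since two full stars must sit at antipodal vertices and the equatorial $4$-cycle contains no intersecting triple --- could even have been left in, as it would contribute $4+4+4+2=14$; likewise the case where only three colors appear on edges is exactly the configuration excluded by your $\chi(\cn)\ge 4$ argument, so no gap remains. In comparison, the paper's argument buys brevity by reusing $\chi(\cn)\ge 4$ once the circles are forced into a single class, at the price of analyzing the mixed edge-and-circle independent sets of $\cc$; yours avoids that classification at the price of the size-profile case check. Your upper bounds and your proof of $\chi(\cn)\ge 4$ are essentially the same as the paper's.
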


\begin{proof}
The fact that $\chi(\cn)=4$ is a well-known and simple exercise in graph theory.
We obtain $\chi(\cc) \le 5$ by properly 4-coloring $\cn$ and giving every circle a common fifth color.
The reader may check that an independent set in $\cc$ contains at most 4 elements, with equality if and only if they are 4 edges meeting at a vertex or 4 circles.
As $|E \cup C| = 16$, if there were a 4-coloring of $\cc$, then each color class must contain 4 elements.
Thus, one color class must consist of the 4 circles, so the remaining ones induce a 3-coloring of $\cn$, a contradiction.
Thus, $\chi(\cc) \ge 5$, completing the proof.
\end{proof}

\begin{proof}[Proof of Theorem \ref{thm: S_2}]
Place a hyperbolic structure on $S_2$.  It admits a unique hyperelliptic involution $J$ with 6 fixed points (see the discussions following \cite[Thm. 3.10 \& Prop. 7.15]{Farbprimer}).
Every non-separating geodesic $\alpha \subset S_2$ meets $\mathrm{Fix}(J)$ in precisely two points \cite[Proposition 2.3]{MalesteinDesigns}.
This result follows at once from \cite[Theorem 1]{Haashyperelliptic}, according to which $J(\alpha) = \alpha$ and $J$ reverses the orientation on $\alpha$: it follows that $J|\alpha$ is a reflection, so it has two fixed points.
Clearly, disjoint non-separating geodesics must meet $\mathrm{Fix}(J)$ in distinct pairs of points.
Therefore, the assignment of a non-separating geodesic to the pair of points it meets in $\mathrm{Fix}(J)$ defines a homomorphism $h: \cn(S_2) \to \kg(6,2)$.
Since $\chi(\kg(6,2))=4$, $h$ induces a proper 4-coloring of $\cn(S_2)$.
Any two curves in $\cs(S_2)$ intersect, so giving them a common fifth color leads to a proper 5-coloring of $\cc(S_2)$.
Thus, we obtain the required upper bounds $\chi(\cn(S_2)) \le 4$ and $\chi(\cc(S_2)) \le 5$.

Next, form the double-cover of the two-sphere branched along the 6 vertices of $O$.
The result is a surface homeomorphic to $S_2$.
The edges in $E$ lift to non-separating curves, and the circles in $C$ lift to separating curves.
The subgraph of $\cc(S_2)$ induced on their lifts is isomorphic to $\cc$, while the subgraph induced on the lifts of $E$ is isomorphic to $\cn$, leading to the required lower bounds $\chi(\cn(S_2)) \ge 4$ and $\chi(\cc(S_2)) \ge 5$.
\end{proof}

We now turn to a non-geometric description of the map $h$.
The$\pmod 2$ intersection pairing $\widehat \iota$ equips $H_1(S_g;\bF_2)$ with the structure of a symplectic vector space.
Let $\sp(2g)$ denote the graph on the non-zero elements of $H_1(S_g;\bF_2)$, where two distinct elements span an edge if they are orthogonal with respect to $\widehat \iota$.
There exists a natural map $f: \cn(S_g) \to \sp(2g)$ assigning a curve to its$\pmod2$ homology class.
This map is a relative of the natural homomorphism $\cc(\Sigma_n) \to \kg(n)$ studied in the case of a planar surface. 
The map $f$ is a homomorphism when $g=2$, since distinct homologous curves intersect, but it is not for any value $g \ge 3$.

\begin{Prop}
\label{prop: kneser symplectic}
The homomorphisms $f : \cn(S_2) \to \sp(4)$ and $h : \cn(S_2) \to \kg(6,2)$ coincide in the sense that there exists an isomorphism $\varphi: \sp(4) \to \kg(6,2)$ such that $\varphi \circ f = h$.
In particular, up to automorphism of $\kg(6,2)$, $h$ does not depend on the choice of hyperbolic structure on $S_2$.
\end{Prop}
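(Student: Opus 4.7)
My plan is to construct the isomorphism $\varphi$ using the hyperelliptic double cover $\pi \co S_2 \to S^2$ whose $6$-element branch set I denote $B = \pi(\mathrm{Fix}(J))$; note that $\pi$ restricts to a bijection $\mathrm{Fix}(J) \overset{\sim}{\to} B$, so that we may use $B$ and $\mathrm{Fix}(J)$ interchangeably.

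The first step is the classical identification $\psi \co H_1(S_2;\bF_2) \smallsetminus \{0\} \to \binom{B}{2}$. Given an embedded arc $\gamma \subset S^2$ with both endpoints in $B$, the preimage $\pi^{-1}(\gamma)$ is a simple closed curve in $S_2$. By disjoint union, any even-sized subset $S \subseteq B$ determines a homology class in $H_1(S_2;\bF_2)$, represented by the preimage of a disjoint collection of arcs with endpoint set $S$. One checks that this assignment is well-defined on equivalence classes modulo $S \sim B \smallsetminus S$, and that it is a bijection
\[
\{S \subseteq B : |S| \text{ even}\}/\!\sim \ \overset{\sim}{\to}\ H_1(S_2;\bF_2)
\]
by comparing orders. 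Each nonzero class is uniquely represented by a $2$-subset, yielding $\psi$.

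The second step is to show that $\psi$ carries $\widehat{\iota}$ to set-intersection parity: for $2$-subsets $A,C\subset B$, I would choose representative arcs in $S^2$ meeting only at common endpoints. Their lifts in $S_2$ meet transversely only above the shared branch points, contributing exactly $|A \cap C| \pmod 2$ to the $\bF_2$-intersection number. For distinct $2$-subsets this vanishes iff they are disjoint, so $\psi$ descends to a graph isomorphism $\varphi \co \sp(4) \to \kg(6,2)$.

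The third step is to verify $\varphi \circ f = h$. For a non-separating geodesic $\alpha \subset S_2$ with $\alpha \cap \mathrm{Fix}(J) = \{W_1,W_2\}$, the relation $J(\alpha) = \alpha$ forces $\pi(\alpha)$ to be an embedded arc in $S^2$ from $\pi(W_1)$ to $\pi(W_2)$, and $\alpha = \pi^{-1}(\pi(\alpha))$. By construction of $\psi$, we have $\psi(f(\alpha)) = \{\pi(W_1),\pi(W_2)\}$, which agrees with $h(\alpha) = \{W_1,W_2\}$ after identifying $B$ with $\mathrm{Fix}(J)$ via $\pi$. The final assertion of independence up to $\Aut(\kg(6,2))$ then follows since $f$ itself is topological, while varying the hyperbolic structure changes only $\varphi$, which is itself an isomorphism.

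The main obstacle is the careful bookkeeping of $\psi$ and its compatibility with $\widehat{\iota}$. While both facts are classical and underlie the exceptional isomorphism $\sp(4,\bF_2) \cong S_6$, they warrant direct verification, for example by computing $\psi$ on a standard symplectic basis arising from a pants decomposition of $S_2$ that descends to three disjoint arcs pairing the branch points.
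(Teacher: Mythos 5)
Your argument is correct and is essentially the paper's proof: both pass to the quotient by the hyperelliptic involution, identify the nonzero classes of $H_1(S_2;\bF_2)$ with $2$-element subsets of the six branch points so that $\widehat{\iota}$-orthogonality corresponds to disjointness of subsets, and then verify $\varphi \circ f = h$ by observing that a nonseparating geodesic $\alpha$ (using $J(\alpha)=\alpha$) descends to an embedded arc joining the images of its two points of $\mathrm{Fix}(J)$. The only difference is packaging: the paper sets up the identification in arbitrary genus via a transfer homomorphism $H_1(S,P;\bF_2) \to H_1(\Sigma(S,P);\bF_2)$ and a pairing on $H_0(P;\bF_2)$, whereas you build it directly in genus two by lifting arcs and counting intersections above shared branch points.
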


The existence of an isomorphism $\sp(4) \approx \kg(6,2)$ follows from the fact that both are strongly regular graphs with parameters $(15,6,1,3)$, of which there exists a unique isomorphism type \cite[Theorems (2.3) and (4.14)]{CvL}, \cite[Remark after Theorem 21.5]{vLW}.
Stripping the proof of Proposition \ref{prop: kneser symplectic} to its algebraic core establishes this isomorphism directly.

\begin{proof}
For context, we begin in somewhat greater generality.
Fix $g \ge 0$ and let $P=\{p_1,\dots,p_{2(g+1)}\}$ denote the set of marked points on a sphere $S$.
Define a bilinear pairing $b$ on $H_0(P;\bF_2)$ by the rule $b([p_i],[p_j]) =\delta_{ij}$.
The long exact sequence in homology of the pair $(S,P)$ leads to a natural identification $H_1(S,P;\bF_2) \approx [P]^\perp$, to which $b$ restricts and has annihilator generated by $[P] = \sum_i [p_i]$.
Thus, $b$ descends to a non-degenerate bilinear pairing $\overline{b}$ on $[P]^\perp / ([P])$.
Let $\Sigma(S,P) \approx S_g$ denote the double cover of $S$ branched along $P$.
The covering induces a transfer homomorphism between inner product spaces $\tau: (H_1(S,P;\bF_2),b) \to (H_1(\Sigma(S,P);\bF_2),\widehat{\iota})$.
One checks that $\tau$ surjects and $\ker(\tau) = ([P])$, leading to a natural identification $([P]^\perp / ([P]), \overline{b}) \approx (H_1(\Sigma(S,P);\bF_2),\widehat{\iota})$.

Now specialize to the case $g=2$.  Each non-zero class in $[P]^\perp / ([P])$ is uniquely represented by an element of the form $[p_i]+[p_j]$ for distinct $i,j\in\{1,\dots,6\}$, and two such elements are $\overline{b}$-orthogonal if and only if they are equal or correspond to disjoint 2-element subsets.
Let $\varphi : \kg(6,2) \to \sp(4)$ denote the map assigning the subset $\{i,j\}$ to $\tau([p_i]+[p_j])$.
The preceding remarks show that $\varphi$ is an isomorphism.

Lastly, select a non-separating, simple, closed geodesic $\alpha \subset S_2$.
It meets $\mathrm{Fix}(J)$ in two points.
Let $P$ denote the points covered by $\mathrm{Fix}(J)$.
The image of $\alpha$ in the quotient $S_2 / J$ is a simple arc $a$ that meets $P$ precisely in its endpoints $p_i$ and $p_j$.
Thus, $h(\alpha) = \{i,j\}$, and $\varphi(h(\alpha)) = [\alpha] = f(\alpha)$.
Therefore, $\varphi \circ f = h$.
\end{proof}


\section{Problems for further study}
\label{sec: conclusion}

Here we collect some problems for further study.
The first one is the most prominent:

\begin{Prob}
\label{p: cc(s_g)}
\textup{
Improve the estimates on the (fractional) chromatic numbers of $\cc(S_g)$ and $\ca(\Sigma_n)$ in Theorems \ref{thm: s_g intro} and \ref{thm: planar arc graph intro}.  We believe that both are closer to the stated lower bounds.
}
\end{Prob}

\begin{Prob}
\label{p: kg(n)}
\textup{
Determine the exact value of $\chi(\kg(n))$.  Does it equal the upper bound given in Theorem \ref{thm: kneser chromatic}?
}
\end{Prob}

\begin{Prob}
\label{p: kg(n)}
\textup{
What is $\chi(\ca(\Sigma_4))$?  It is 8 or 9 by Theorem \ref{thm:four-holed arcs}.
}
\end{Prob}

\begin{Prob}
\textup{
Generalize the results on unique colorability.
For instance, does there exist a unique homomorphism $\cc(\Sigma_n) \to \kg(n) \smallsetminus \kg(n,1)$?
Compare this question with the rigidity of embeddings $\cg(n) \smallsetminus \cg(n,1) \into \cc(\Sigma_n)$ \cite{Aramayonafinite}.
Does there exist a unique homomorphism $\cn(S_2) \to \kg(6,2)$?
Proposition \ref{prop: kneser symplectic} provides evidence for this possibility.
}
\end{Prob}

\begin{Prob}
\textup{
Explore other graph-theoretic properties of the curve graphs related to the chromatic number, such as the Shannon capacity and spectra.
}
\end{Prob}

\begin{Prob}
\textup{
Identify $\Aut(\cc_v(S))$ with a subgroup of $\Mod(S)$ (cf.~\cite{Ivanovautomorphisms}). 
Is it isomorphic to the stabilizer of $v$ in $\Mod(S)$?
Does there exist a simple generating set for this stabilizer analogous to the Humphries generating set?
In that way, it might be possible to recover the closed case of Proposition \ref{prop: connected} without passage to the case of non-empty boundary.
}
\end{Prob}

\begin{Prob}
\textup{
Describe a generating set for the kernel of the Chillingworth homomorphism.
Is it generated by the Johnson kernel and $(g-1)$-th powers of bounding pair maps?
Is it finitely generated?
}
\end{Prob}

\bibliographystyle{alpha}
\bibliography{biblio}

\end{document}